\DeclareFontFamily{OMX}{MnSymbolE}{}
\DeclareSymbolFont{MnLargeSymbols}{OMX}{MnSymbolE}{m}{n}
\DeclareFontShape{OMX}{MnSymbolE}{m}{n}{
	<-6>  MnSymbolE5
	<6-7>  MnSymbolE6
	<7-8>  MnSymbolE7
	<8-9>  MnSymbolE8
	<9-10> MnSymbolE9
	<10-12> MnSymbolE10
	<12->   MnSymbolE12
}{}
\DeclareFontShape{OMX}{MnSymbolE}{b}{n}{
	<-6>  MnSymbolE-Bold5
	<6-7>  MnSymbolE-Bold6
	<7-8>  MnSymbolE-Bold7
	<8-9>  MnSymbolE-Bold8
	<9-10> MnSymbolE-Bold9
	<10-12> MnSymbolE-Bold10
	<12->   MnSymbolE-Bold12
}{}
\let\llangle\@undefined
\let\rrangle\@undefined
\DeclareMathDelimiter{\llangle}{\mathopen}%
{MnLargeSymbols}{'164}{MnLargeSymbols}{'164}
\DeclareMathDelimiter{\rrangle}{\mathclose}%
{MnLargeSymbols}{'171}{MnLargeSymbols}{'171}
\newcommand{\opnorm}{\@ifstar\@opnorms\@opnorm}
\newcommand{\@opnorms}[1]{%
	\left|\mkern-1.5mu\left|\mkern-1.5mu\left|
	#1
	\right|\mkern-1.5mu\right|\mkern-1.5mu\right|
}
\newcommand{\@opnorm}[2][]{%
	\mathopen{#1|\mkern-1.5mu#1|\mkern-1.5mu#1|}
	#2
	\mathclose{#1|\mkern-1.5mu#1|\mkern-1.5mu#1|}
}
\newtheorem{thm}{Theorem}
\newtheorem{lem}[thm]{Lemma}
\newtheorem{prop}[thm]{Proposition}
\newtheorem{coro}[thm]{Corollary}
\newtheorem{assume}{Assumption}
\newtheorem{algo}{Algorithm}
\theoremstyle{definition}
\newtheorem{defi}{Definition}
\newtheorem{rmk}{Remark}
\newcommand{\sC}{\mathcal{C}}
\newcommand{\sS}{\mathbb{S}}
\newcommand{\sP}{\mathcal{P}}
\newcommand{\sR}{\mathbb{R}}
\newcommand{\dom}{\mathrm{dom}}
\newcommand{\Ball}{\mathrm{Ball}}
\newcommand{\prox}{\mathrm{prox}}
\newcommand{\crit}{\mathrm{crit}}
\DeclareMathOperator*\dist{dist}
\newcommand{\KL}{K\L }
\newcommand{\Loja}{\L ojasiewicz }
\newcommand{\Lr}{\mathcal{L}_{r}}
\newcommand{\Fr}{\mathcal{F}_{r}}
\newcommand{\Fk}{\mathcal{F}_{k}}
\newcommand{\F}{\mathcal{F}}
\newcommand{\M}{\mathbf{M}}
\newcommand{\E}{\mathcal{E}}
\newcommand{\Id}{\mathbf{Id}}
\newcommand{\0}{\mathbf{0}}
\newcommand{\mysum}{\displaystyle \sum\limits}
\title{The proximal alternating direction method of multipliers in the nonconvex setting: convergence analysis and rates}
\author{Radu Ioan Bo\c{t}\thanks{Faculty of Mathematics, University of Vienna, Oskar-Morgenstern-Platz 1, 1090 Vienna, Austria, e-mail: \url{radu.bot@univie.ac.at}. Research partially supported by FWF (Austrian Science Fund), project I 2419-N32.} 
\thanks{Invited Associate Professor, Babe\c s-Bolyai University, Faculty of Mathematics and Computer Sciences, str. Mihail Kog\u alniceanu 1, 400084 Cluj-Napoca, Romania.} \and Dang-Khoa Nguyen \thanks{Faculty of Mathematics, University of Vienna, Oskar-Morgenstern-Platz 1, 1090 Vienna, Austria, e-mail: \url{dang-khoa.nguyen@univie.ac.at}. The author gratefully acknowledges the financial support of the Doctoral Programme {\it Vienna Graduate School on Computational Optimization (VGSCO)} which is funded by Austrian Science Fund  (FWF, project W1260-N35).}}
\begin{document}
	
\maketitle	

%
%
%
%

\textbf{Abstract.} We propose two numerical algorithms in the fully nonconvex setting for the minimization of the sum of a smooth function and the composition of a nonsmooth function with a linear operator. The iterative schemes are formulated in the spirit of the proximal alternating direction method of multipliers and its linearized variant, respectively. The proximal terms are introduced via variable metrics, a fact which allows us to derive new proximal splitting algorithms for nonconvex structured optimization problems, as particular 
instances of the general schemes. Under mild conditions on the sequence of variable metrics and by assuming that a regularization of the associated augmented Lagrangian has the Kurdyka-\L ojasiewicz property, we prove that the iterates converge to a KKT point of the objective function. By assuming that the augmented 
Lagrangian has the \L ojasiewicz property, we also derive convergence rates for both the augmented Lagrangian and the iterates. 

\textbf{Keywords.} nonconvex complexly structured optimization problems, alternating direction method of multipliers, proximal splitting algorithms, variable metric, convergence analysis, convergence rates, Kurdyka-\L ojasiewicz property, \L ojasiewicz exponent

\textbf{AMS subject classification.} 47H05, 65K05, 90C26

\section{Introduction}
\label{sec:intro}

\subsection{Problem formulation and motivation}
\label{subsec:moti}

In this paper we are interested in solving optimization problems of the form
\begin{equation}
\label{intro:problem}
\min\limits_{x \in \sR^{n}} \left\lbrace g \left( A x \right) + h \left( x \right) \right\rbrace,
\end{equation}
where $g \colon \sR^{m} \to \sR \cup \left\lbrace + \infty \right\rbrace$ is a proper and lower semicontinuous function, $h \colon \sR^{n} \to \sR$ is a Fr\'{e}chet differentiable function with $L$-Lipschitz continuous gradient and $A \colon \sR^{n} \to \sR^{m}$ is a linear operator. The spaces $\sR^{n}$ and $\sR^{m}$ are equipped with Euclidean inner products $\left\langle \cdot , \cdot \right\rangle$ and associated norms $\left\lVert \cdot \right\rVert = \sqrt{\left\langle \cdot , \cdot \right\rangle}$, which are both denoted in the same way, as there is no risk of confusion.

We start by briefly describing the Alternating Direction Method of Multipliers (ADMM) designed to solve optimization problems of the form
\begin{equation}
\label{intro:problem-convex}
\min\limits_{x \in \sR^{n}} \left\lbrace f \left( x \right) + g \left( A x \right) + h \left( x \right) \right\rbrace,
\end{equation}
where $g$ and $h$ are assumed to be also {\it convex} and $f \colon \sR^{n} \to \sR \cup \left\lbrace + \infty \right\rbrace$ is another proper, convex and lower semicontinuous function. By introducing an auxiliary variable, one can rewrite problem \eqref{intro:problem-convex} as
\begin{equation}
\label{intro:problem:ADMM}
\min\limits_{\substack{\left( x , z \right) \in \sR^{n} \times \sR^{m} \\ A x - z = \0}} \left\lbrace f \left( x \right) + g \left( z \right) + h \left( x \right) \right\rbrace.
\end{equation}
For a fixed real number $r > 0$, the \emph{augmented Lagrangian} associated with problem \eqref{intro:problem:ADMM} reads
\begin{equation*}
\Lr \colon \sR^{n} \times \sR^{m} \times \sR^{m} \to \sR \cup \left\lbrace + \infty \right\rbrace, \ \Lr \left( x , z , y \right) = f(x) + g \left( z \right) +  h \left( x \right) +  
\left\langle y , A x - z \right\rangle + \dfrac{r}{2} \left\lVert A x - z \right\rVert^{2}.
\end{equation*}
Given a starting vector $\left( x^{0} , z^{0} , y^{0} \right) \in \sR^{n} \times \sR^{m} \times \sR^{m}$ and $\{\M_1^{k} \}_{k \geq 0} \subseteq \sR^{n \times n}, \left\lbrace \M_{2}^{k} \right\rbrace_{k \geq 0} \subseteq \sR^{m \times m}$, two sequences of symmetric and positive semidefinite matrices, 
the following \emph{proximal ADMM algorithm formulated in the presence of a smooth function and involving variable metrics} has been proposed and investigated in \cite{Banert-Bot-Csetnek}:  generate the sequence $\{(x^k, z^k, y^k)\}_{k \geq 0}$ for every $k \geq 0$ as
\begin{subequations}
	\label{intro:algo}
	\begin{align}
	\label{intro:algo:x}
	x^{k+1} & \in \arg\min\limits_{x \in \sR^{n}} \left\lbrace f \left( x \right) + \langle x - x^k, \nabla h(x^k) \rangle  + \dfrac{r}{2} \left\lVert A x - z^{k} + \dfrac{1}{r} y^{k} \right\rVert ^{2} + \frac{1}{2} \left \|x - x^k \right \|_{\M^k_1}^2\right\rbrace ,	\\
	\label{intro:algo:z}
	z^{k+1} & = \arg\min\limits_{z \in \sR^{m}} \left\lbrace g \left( z \right) + \dfrac{r}{2} \left\lVert A x^{k+1} - z + \dfrac{1}{r} y^{k} \right\rVert ^{2} + \frac{1}{2} \left \|z - z^k \right \|_{\M^k_2}^2 \right\rbrace , \\
	\label{intro:algo:y}
	y^{k+1} & = y^{k} + \rho r \left( A x^{k+1} - z^{k+1} \right) .
	\end{align}	
\end{subequations}
It has been proved in \cite{Banert-Bot-Csetnek} that, if $\rho=1$ and the set of the saddle points of the Lagrangian associated with \eqref{intro:problem:ADMM}  (which is nothing else than $\Lr$ when $r=0$) is nonempty, and the two matrix sequences and the operator $A$ fulfill mild additional assumptions, 
then the sequence $\{(x^k, z^k, y^k)\}_{k \geq 0}$ converges to a saddle point of the Lagrangian associated with problem \eqref{intro:problem:ADMM} and provides in this way both an optimal solution of \eqref{intro:problem}  and an optimal solution of its Fenchel dual problem. 
Furthermore, an ergodic primal-dual gap convergence rate result has been proved.

In case $h=0$, the above iterative scheme encompasses as special cases different numerical algorithms considered in the literature. If $\M_{1}^{k} = \M_{2}^{k} = 0$ for all $k \geq 0$, then \eqref{intro:algo:x}-\eqref{intro:algo:y} becomes the \emph{classical ADMM algorithm} 
(\cite{Boyd-et.al, Fortin-Glowinski, Gabay, Gabay-Meicer}), which lately gained a huge popularity in the optimization community, despite its poor implementation properties caused by the fact that, in general, the calculation of the sequence of primal variables $\left\lbrace x^{k} \right\rbrace_{k \geq 0}$ 
does not correspond to a proximal step. For an \emph{inertial version} of the classical ADMM algorithm we refer the reader to \cite{Bot-Csetnek-iADMM}. 
On the other hand, if $\M_{1}^{k} = \M_{1}$ and  $\M_{2}^{k} = \M_{2}$ for all $k \geq 0$, then \eqref{intro:algo:x}-\eqref{intro:algo:y} recovers the \emph{proximal ADMM algorithm} investigated by Shefi and Teboulle in \cite{Shefi-Teboulle} (see also \cite{Cui-Li-Sun-Toh, Fazel-Pong-Sun-Tseng}). 
It has been pointed out in \cite{Shefi-Teboulle} that, for suitable choices of the matrices $\M_1$ and $\M_2$, the proximal ADMM algorithm becomes a primal-dual splitting algorithm in the sense of those considered in \cite{Bot-Csetnek-Heinrich, Chambolle-Pock, Condat, Vu}, and which, due to its full splitting character, 
overcomes the drawbacks of the classical ADMM algorithm. Recently, in \cite{Bot-Csetnek-ADMM} it has been shown that, if $f$ is strongly convex, then suitable choices of the non-constant sequences $\left\lbrace \M_{1}^{k} \right\rbrace_{k \geq 0}$ and $\left\lbrace \M_{2}^{k} \right\rbrace_{k \geq 0}$ lead to a rate of convergence of $\mathcal{O} \left( 1/k \right)$  for the sequence of 
primal iterates.

In this paper, we propose a proximal ADMM (P-ADMM) algorithm  and a proximal linearized ADMM (PL-ADMM) algorithm for solving the optimization problem \eqref{intro:problem} and carry out a convergence analysis for both algorithms. 
We first prove, under not very restrictive assumptions on the problem data, that the sequence of generated iterates $\{(x^k, z^k, y^k)\}_{k \geq 0}$ is bounded. Given these premises we show that the cluster points of $\{(x^k, z^k, y^k)\}_{k \geq 0}$ are {\it KKT points} of the problem \eqref{intro:problem}. Provided that a regularization of the augmented Lagrangian satisfies the Kurdyka-\L ojasiewicz property, we show global convergence of the generated sequence of iterates.
Provided this regularization of the augmented Lagrangian has the \Loja property, we derive rates of convergence for the sequence of iterates. To the best of our knowledge, these are the first results in the literature that deal with convergence rates for the nonconvex ADMM.

In the following we will comment on previous works addressing the ADMM algorithm in the nonconvex setting. None of the papers which have addressed nonconvex optimization problems involving compositions with linear operators propose and investigate iterative schemes designed in the 
spirit of full splitting algorithms. In \cite{Li-Pong}, the convergence of the ADMM algorithm for solving the problem \eqref{intro:problem} is studied under the assumption that $h$ is twice continuously differentiable with bounded Hessian. 
In \cite{Hong-Lou-Razaviyayn}, the ADMM algorithm is used to minimize the sum of finitely many smooth nonconvex functions and a nonsmooth convex function, by rewriting it as an general consensus problem. No linear operator occurs in the formulation of the optimization problem under investigation. In \cite{Ames-Hong}, 
the ADMM algorithm is used to solve a DC optimization problem over the unit ball which occurs in the penalized zero-variance linear discriminant analysis.  In \cite{Wang-Xu-Xu}, a nonconvex ADMM algorithm involving proximal terms induced via Bregman distances is introduced and investigated, however, without addressing the question
of the boundedness of the generated iterates. On the other hand, in \cite{Guo-Han-Wu}, in order to guarantee boundedness of the iterates a strong assumption on $g$ is made, which is proved to hold for the normed-squared function. In \cite{Wang-Yin-Zeng}, a lot of efforts are made to guarantee boundedness for the generated iterates of the nonconvex ADMM algorithm, 
which is an essential component  of the convergence analysis, however, this is done by assuming that the objective function is continuous and \emph{coercive over the feasible set}, while its nonsmooth part is either \emph{restricted prox-regular} or \emph{piecewise linear}. Similar ingredients are used in \cite{Liu-Shen-Gu} in the convergence analysis of a nonconvex linearized ADMM algorithm.

Recently, Bolte, Sabach and Teboulle have proposed in \cite{Bolte-Sabach-Teboulle:MOOR} a generic iterative scheme for solving a general optimization problem of the form \eqref{intro:problem}, but by replacing the linear operator $A$ with a general continuously differentiable operator. A global convergence analysis relying on the use of the Kurdyka-\L ojasiewicz property is carried out. It is also shown that the generic iterative scheme encompasses several Lagrangian based algorithms, including the proximal alternating  direction method of multipliers and the proximal alternating linearized minimization method. The latter is analysed into detail in the particular case when $g$ is composed with a linear operator, which coincides with the one in this paper.  The two algorithms we propose in this paper are formulated in the same spirit, however,  they lead for some particular choices of the variable metrics to full splitting algorithms. In addition, we carefully address the issue of the boundedness of the sequence of generated iterates and complement the convergence analysis with the derivation of convergence rates.

The major strengths of our paper are:
\begin{enumerate}[label=\arabic*., leftmargin=\parindent]
\item We prove under quite general assumptions that the sequence $\{(x^k, z^k, y^k)\}_{k \geq 0}$ is bounded. In the nonconvex setting, the boundedness of the sequence of generated iterates plays a central role in the convergence analysis. In fact, the reason, why we assume  in this paper that the function $g$ is smooth, is exclusively given by the fact that only in this setting we can prove boundedness of this sequence under general assumptions.

\item We prove convergence for relaxed variants of the nonconvex ADMM algorithms, which allow to chose in the update of the dual sequence $\rho \in \left(0, 2 \right)$. We notice that $\rho = 1$ is the standard choice in the literature (\cite{Ames-Hong,Banert-Bot-Csetnek,Bot-Csetnek-ADMM,Li-Pong,Shefi-Teboulle,Wang-Yin-Zeng}).
Gabay and Mercier proved in \cite{Gabay-Meicer} in the convex setting that $\rho$ may be chosen in $\left(0, 2 \right)$, however, the majority of the extensions of the convex relaxed ADMM algorithm assume that $\rho \in \left( 0 , \frac{1 + \sqrt{5}}{2} \right)$ (see \cite{Cui-Li-Sun-Toh,Fazel-Pong-Sun-Tseng,Gabay,Sun-Toh-Yang,Xu-Wu,Yang-Pong-Chen}) or ask for a particular choice of $\rho$, which is interpreted as a step size (see \cite{Hong-Luo}). In \cite{Yang-Pong-Chen}, an alternating minimization algorithm for the minimization of the sum of a simple nonsmooth function and a smooth function  in the nonconvex setting, which allows for a parameter $\rho$ different from $1$, has been proposed. 
	
\item By appropriate choices of the matrix sequences, we derive from the proposed iterative schemes full splitting algorithms for solving the nonconvex complexly structured optimization problem \eqref{intro:problem}. More precisely, (P-ADMM) gives rise to an iterative scheme formulated only in terms of proximal steps for the functions $g$ and $h$ and of forward evaluations of the matrix $A$, while (PL-ADMM) gives rise to an iterative scheme in which the function $h$ is performed via a gradient step. Exact formulas for proximal operators are available not only for large classes of convex functions (\cite{Beck,Combettes-Wajs}), but also of nonconvex functions (\cite{Attouch-Bolte-Redont-Soubeyran, Hare-Sagastizabal, Lewis-Malick}). The fruitful idea to linearize the step involving the smooth term has been used in the past in the context of ADMM algorithms mostly in the convex setting (see \cite{Lin-Liu-Li,Ouyang-Chen-Lan-Pasiliao,Ren-Lin,Xu-Wu,Yang-Yuan}), but also in the nonconvex setting (see \cite{Bolte-Sabach-Teboulle:MOOR, Liu-Shen-Gu}).
\end{enumerate}

\subsection{Notations and preliminaries}\label{subsec:nota}

Let $N$ be a strictly positive integer. We denote by $\mathbbm{1}:= \left( 1, \ldots , 1 \right) \in \sR^{N}$ and write for $x := \left( x_{1}, \ldots , x_{N} \right)$, $y := \left( y_{1}, \ldots , y_{N} \right) \in \sR^{N}$
\begin{equation*}
x < y \ \textrm{ if and only if } \ x_{i} < y_{i} \ \forall i = 1 , \ldots , N.
\end{equation*}

We endow the Cartesian product $\sR^{N_{1}} \times \sR^{N_{2}} \times \ldots \times \sR^{N_{p}}$, where $p$ is a strictly positive integer, with \emph{inner product} and associated \emph{norm} defined for $u := \left( u_{1} , \ldots , u_{p} \right), u' := \left( u_{1}' , \ldots , u_{p}' \right) 
\in \sR^{N_{1}} \times \sR^{N_{2}} \times \ldots \times \sR^{N_{p}}$ by
\begin{equation*}
\left\llangle u, u' \right\rrangle = \mysum_{i = 1}^{p} \left\langle u_{i} , u_{i}' \right\rangle \quad \textrm{ and } \quad \opnorm{u} = \sqrt{\mysum_{i = 1}^{p} \left\lVert u_{i} \right\rVert ^{2}},
\end{equation*}
respectively. For every $u := \left( u_{1} , \ldots , u_{p} \right), u' := \left( u_{1}' , \ldots , u_{p}' \right) \in \sR^{N_{1}} \times \sR^{N_{2}} \times \ldots \times \sR^{N_{p}}$ we have
\begin{equation}
\label{intro:norm-inequality}
\dfrac{1}{\sqrt{p}} \mysum_{i = 1}^{p} \left\lVert u_{i} \right\rVert \leq \opnorm{u} = \sqrt{\mysum_{i = 1}^{p} \left\lVert u_{i} \right\rVert ^{2}} \leq \mysum_{i = 1}^{p} \left\lVert u_{i} \right\rVert .
\end{equation}

We denote by $\sS^{N}_{+}$ the family of symmetric and positive semidefinite matrices $M \in \sR^{N \times N}$. Every $M \in \sS^{N}_{+}$ induces a \emph{semi-norm} defined by
\begin{equation*}
\left\lVert x \right\rVert _{M}^{2} := \left\langle M x , x \right\rangle \ \forall x \in \sR^{N} .
\end{equation*}

The \emph{Loewner partial ordering} on $\sS^{N}_{+}$ is defined for $M , M ' \in \sS^{N}_{+}$ as
\begin{equation*}
M \succcurlyeq M ' \Leftrightarrow \left\lVert x \right\rVert _{M}^{2} \geq \left\lVert x \right\rVert _{M '}^{2}  \ \forall x \in \sR^{N} .
\end{equation*}
Thus $M \in \sS^{N}_{+}$ is nothing else than $M \succcurlyeq \0$.
For $\alpha > 0$ we set
\begin{equation*}
\sP^{N}_{\alpha} := \left\lbrace M \in \sS^{N}_{+} \colon M \succcurlyeq \alpha \Id \right\rbrace,
\end{equation*}
where $\Id$ denotes the identity matrix in $\sR^{N \times N}$. 
If $M \in \sP^{N}_{\alpha}$, then the semi-norm $\left\lVert \cdot \right\rVert _{M}$ becomes a norm.

The linear operator $A$ is \emph{surjective} if and only if its associated matrix has full row rank. This assumption is further equivalent to the fact that the matrix associated to $A A^{*}$, where $A^*$ denotes the \emph{adjoint operator} of $A$, is positively definite. Since
\begin{equation*}
\lambda_{\min} \left( A A^{*} \right) \left\lVert y \right\rVert ^{2} \leq \left\lVert y \right\rVert ^{2}_{A A^{*}} = \langle A A^{*} y, y \rangle = \left\lVert A^{*} y \right\rVert ^{2} \ \forall y \in \sR^{m},
\end{equation*}
this is further equivalent to $\lambda_{\min} \left( A A^* \right) > 0$ (and  $A A^{*} \in \sP^{n}_{\lambda_{\min} \left( A A^{*} \right)}$), where $\lambda_{\min} (\cdot)$ denotes the smallest eigenvalue of a matrix.
Similarly, $A$ is injective if and only if $\lambda_{\min} \left( A^* A \right) > 0$ (and  $A^* A \in \sP^{m}_{\lambda_{\min} \left( A^* A\right)}$).

\begin{prop}
	\label{prop:lips-semiconvex}
	Let $\Psi \colon \sR^{N} \to \sR$ be Fr\'echet differentiable such that its gradient is Lipschitz continuous with constant $L > 0$. Then the following statements are true: 
	\begin{enumerate}
		\item For every $x,y \in \sR^{N}$ and every $z \in [x,y] = \{(1-t)x + ty : t \in [0,1] \}$ it holds
		\begin{equation}
		\label{eq:semi-convex}
		\Psi \left( y \right) \leq \Psi \left( x \right) + \left\langle \nabla \Psi \left( z \right) , y - x \right\rangle + \dfrac{L}{2} \left\lVert y - x \right\rVert ^{2};
		\end{equation}

		\item If $\Psi$ is bounded from below, then for every $\sigma > 0$ it holds
		\begin{equation*}
		\inf\limits_{x \in \sR^{N}} \left\lbrace \Psi \left( x \right) - \left( \dfrac{1}{\sigma} - \dfrac{L}{2 \sigma^{2}} \right) \left\lVert \nabla \Psi \left( x \right) \right\rVert ^{2} \right\rbrace > - \infty .
		\end{equation*}
	\end{enumerate}	
\end{prop}

\begin{proof}
	\begin{enumerate}
		\item Let be $x , y \in \sR^{N}$ and $z:= (1-t)x + ty$ for $t \in [0,1]$. By the fundamental theorem for line integrals we have
		\begin{align}
		\label{eq:Fund-line}
		\Psi \left( y \right) - \Psi \left( x \right) & = \int_{0}^{1} \left\langle \nabla \Psi \left((1-s)x + sy \right) , y - x \right\rangle ds \nonumber \\
		& = \int_{0}^{1} \left\langle \nabla \Psi \left((1-s)x + sy \right) - \nabla \Psi \left( z \right) , y - x \right\rangle ds + 
		\left\langle \nabla \Psi \left( z \right) , y - x \right\rangle .
		\end{align}
		Since
		\begin{align}
		\label{eq:CS-integ}
		& \left\lvert \int_{0}^{1} \left\langle \nabla \Psi \left((1-s)x + sy  \right) - \nabla \Psi \left( z \right) , y - x \right\rangle ds \right\rvert \nonumber \\
		\leq & \ \int_{0}^{1} \left\lVert \nabla \Psi \left((1-s)x + sy  \right) - \nabla \Psi \left( z \right) \right\rVert \cdot \left\lVert y - x \right\rVert ds \leq L \left\lVert x - y \right\rVert ^{2} \int_{0}^{1} \left\lvert s - t \right\rvert ds \nonumber \\
		= & \ L \left\lVert x - y \right\rVert ^{2} \left( \int_{0}^{t} \left( - s + t\right) ds + \int_{t}^{1} \left( s - t \right) ds \right) = L \left( \dfrac{1}{2} - t \left( 1 - t\right) \right) \left\lVert x - y \right\rVert ^{2} .
		\end{align}
		The inequality in \eqref{eq:semi-convex} follows by combining \eqref{eq:Fund-line} and \eqref{eq:CS-integ} and by using that $0 \leq t \leq 1$.
		
		\item The inequality in \eqref{eq:semi-convex} gives for every $x \in \sR^{N}$
		\begin{align*}
		- \infty < \inf\limits_{y \in \sR^{N}} \Psi \left( y \right) & \leq \Psi \left( x - \dfrac{1}{\sigma} \nabla \Psi \left( x \right) \right) \\	
		& \leq \Psi \left( x \right) + \left\langle \left( x - \dfrac{1}{\sigma} \nabla \Psi \left( x \right) \right) - x , \nabla \Psi \left( x \right) \right\rangle + \dfrac{L}{2} \left\lVert \left( x - \dfrac{1}{\sigma} \nabla \Psi \left( x \right) \right) - x \right\rVert ^{2} \\
		& = \Psi \left( x \right) - \left( \dfrac{1}{\sigma} - \dfrac{L}{2 \sigma^{2}} \right) \left\lVert \nabla \Psi \left( x \right) \right\rVert ^{2},
		\end{align*}
		which leads to the desired conclusion.
		\qedhere
	\end{enumerate}
\end{proof}

\begin{rmk}\label{descent}
The so-called Descent Lemma, which says that for a Fr\'echet differentiable function $\Psi \colon \sR^{N} \to \sR$ having Lipschitz continuous gradient with constant $L > 0$ it holds
\begin{equation*}
	\Psi \left( y \right) \leq \Psi \left( x \right) + \left\langle \nabla \Psi \left( x \right) , y - x \right\rangle + \dfrac{L}{2} \left\lVert y - x \right\rVert ^{2} \quad \forall x,y \in \sR^N,
\end{equation*}
follows from statement (i) of the above proposition for $z:=x$.
	
Moreover, for $z:=y$ we have that
\begin{equation*}
\Psi \left( x \right) \geq \Psi \left( y \right) + \left\langle \nabla \Psi \left( y \right) , x - y \right\rangle - \dfrac{L}{2} \left\lVert x - y \right\rVert ^{2} \quad \forall x,y \in \sR^N,
\end{equation*}
which is equivalent to the fact that $\Psi + \dfrac{L}{2} \left\lVert \cdot \right\rVert ^{2}$ is a convex function, in other words, $\Psi$ is a $L$-semiconvex function (\cite{Bolte-Daniilidis-Ley-Mazet}). It follows from the previous result that a  Fr\'echet differentiable function with $L$-Lipschitz continuous gradient is $L$-semiconvex.
\end{rmk}

The \emph{limiting subdifferential} will play an important role in the convergence analysis we are going to carry out for the nonconvex ADMM algorithm. Let $\Psi \colon \sR^{N} \to \sR \cup \left\lbrace + \infty \right\rbrace$ be 
a proper and lower semicontinuous function. For $x \in \dom \Psi := \left\lbrace x \in \sR^{N} \colon \Psi \left( x \right) < + \infty \right\rbrace$, the \emph{Fr\'{e}chet (viscosity) subdifferential} of $\Psi$ at $x$ is 
\begin{equation*}
\widehat{\partial} \Psi \left( x \right) := \left\lbrace d \in \sR^{N} \colon \liminf\limits_{y \to x} \dfrac{\Psi \left( y \right) - \Psi \left( x \right) - \left\langle d , y - x \right\rangle}{\left\lVert y - x \right\rVert} \geq 0 \right\rbrace
\end{equation*}
and the \emph{limiting (Mordukhovich) subdifferential} of $\Psi$ at $x$ is
\begin{align*} 
\partial \Psi \left( x \right) := \{ d \in \sR^{N} \colon  & \mbox {exist sequences} \ x^{k} \to x \ \mbox{and} \  d^{k} \to d \textrm{ as } k \to +\infty\\
& \mbox{such that} \ \Psi \left( x^{k} \right) \to \Psi \left( x \right) \textrm{ as } k \to +\infty \textrm{ and } d^{k} \in \widehat{\partial} \Psi \left( x^{k} \right) \ \mbox{for all} \ k \geq 0 \}.
\end{align*}
For $x \notin \dom \left( \Psi \right)$, we set $\widehat{\partial} \Psi \left( x \right) = \partial \Psi \left( x \right) := \emptyset$.

The inclusion $\widehat{\partial} \Psi \left( x \right) \subseteq \Psi \left( x \right)$ holds for every $x \in \sR^{N}$. If $\Psi$ is convex,then these two subdifferentials coincide with the \emph{convex subdifferential}, in other words
\begin{equation*}
\widehat{\partial} \Psi \left( x \right) = \partial \Psi \left( x \right) = \left\lbrace d \in \sR^{N} \colon \Psi \left( y \right) \geq \Psi\left( x \right) + \left\langle d , y - x \right\rangle \ \forall y \in \sR^{N} \right\rbrace \ \mbox{for all} \ x \in \dom \Psi.
\end{equation*}
If $x \in \sR^{N}$ is a local minimum of $\Psi$, then $0 \in \partial \Psi \left( x \right)$. We denote by $\crit (\Psi) = \{x \in \sR^N : 0 \in \partial \Psi \left( x \right) \}$ the set of \emph{critical points} of $\Psi$. The limiting subdifferential fulfills the \emph{closedness criterion}: if $\left\lbrace x^{k} \right\rbrace _{k \geq 0}$ and $\{d^k\}_{k \geq 0}$ are sequence in $\sR^{N}$ such that $d^{k} \in \partial \Psi \left( x^{k} \right)$ for all $k \geq 0$, and $\left( x^{k} , d^{k}  \right) \to \left( x , d \right)$ and $\Psi \left( x^{k} \right) \to \Psi \left( x \right)$ as $k \to +\infty$, then $d \in \partial \Psi \left( x \right)$. We have the following \emph{subdifferential sum rule} holds (\cite[Proposition 1.107]{Mordukhovich}, \cite[Exercise 8.8]{Rockafellar-Wets}): if $\Phi \colon \sR^{N} \to \sR$ is a continuously differentiable function, then $\partial \left( \Psi + \Phi \right) \left( x \right) = 
\partial \Psi \left( x \right) + \nabla \Phi \left( x \right)$ for all $x \in \sR^{N}$; and the following \emph{subdifferential rule for the composition with a linear operator $A \colon \sR^{N'} \to \sR^{N}$} (\cite[Proposition 1.112]{Mordukhovich}, \cite[Exercise 10.7]{Rockafellar-Wets}): 
if $x \in \dom \Psi$ and $A$ is injective, then $\partial \left( \Psi \circ A \right) \left( x \right) = A^{*} \partial \Psi \left( A x \right)$. 

We close this section by presenting two convergence results for real sequences that will be used in the sequel in the convergence analysis. The next lemma is often used in the literature when proving convergence of numerical algorithms relying on Fej\'er monotonicity techniques (see, for instance, 
\cite[Lemma 2.2]{Bot-Csetnek}, \cite[Lemma 2]{Bot-Csetnek-Laszlo}).
\begin{lem}
	\label{lem:conv-pre}
	Let $\left\lbrace b_{k} \right\rbrace _{k \geq 0}$ be a sequence in $\sR$ and $\left\lbrace \xi_{k} \right\rbrace _{k \geq 0}$ a sequence in $\sR_{+}$. Assume that $\left\lbrace b_{k} \right\rbrace _{k \geq 0}$ is bounded from below and that for every $k \geq 0$
	\begin{equation*}
	b_{k+1} + \xi_{k} \leq b_{k}.
	\end{equation*}
	Then the following statements hold:
	\begin{enumerate}
		\item the sequence $\left\lbrace \xi_{k} \right\rbrace _{k \geq 0}$ is summable, namely $\mysum_{k \geq 0} \xi_{k} < + \infty$;
		\item the sequence $\left\lbrace b_{k} \right\rbrace _{k \geq 0}$ is monotonically decreasing and convergent.
	\end{enumerate}
\end{lem}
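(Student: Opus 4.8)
The statement to prove is Lemma~\ref{lem:conv-pre}: given $\{b_k\}_{k\ge 0}$ bounded from below, $\{\xi_k\}_{k\ge 0}\subseteq\sR_+$, and $b_{k+1}+\xi_k\le b_k$ for all $k$, conclude that $\sum_{k\ge 0}\xi_k<+\infty$ and that $\{b_k\}$ is monotonically decreasing and convergent.

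This is a classical telescoping argument. The plan is as follows. First, since $\xi_k\ge 0$ for all $k$, the recursion $b_{k+1}\le b_k-\xi_k\le b_k$ immediately gives that $\{b_k\}$ is monotonically decreasing; combined with the hypothesis that $\{b_k\}$ is bounded from below, the monotone convergence theorem for real sequences yields that $\{b_k\}$ converges to some limit $b_*\in\sR$ (in fact $b_*=\inf_{k\ge 0}b_k$). This settles statement (ii). Second, for statement (i), I would sum the inequality: rearranging gives $\xi_k\le b_k-b_{k+1}$, so for every $N\ge 0$,
\begin{equation*}
\sum_{k=0}^{N}\xi_k\le\sum_{k=0}^{N}\left(b_k-b_{k+1}\right)=b_0-b_{N+1}\le b_0-b_*,
\end{equation*}
using the telescoping of the right-hand side and then the fact that $b_{N+1}\ge b_*$ (or more crudely $b_{N+1}\ge\inf_k b_k>-\infty$). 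Since the partial sums of the nonnegative series $\sum_k\xi_k$ are bounded above by the constant $b_0-b_*$, the series converges, i.e.\ $\sum_{k\ge 0}\xi_k<+\infty$.

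There is essentially no obstacle here: the only mild subtlety is the order of reasoning---one should establish convergence of $\{b_k\}$ (or at least record that $\inf_k b_k$ is finite, which follows from the boundedness-from-below hypothesis) before invoking it to bound the partial sums, but one could equally bound the partial sums by $b_0-\inf_k b_k$ directly without knowing the limit exists. Either route is a two-line computation. I would present statement (ii) first since its proof is self-contained, then deduce statement (i) from the telescoped sum together with the lower bound on $\{b_k\}$.
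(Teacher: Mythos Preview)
Your proof is correct and is precisely the standard telescoping argument. The paper does not actually supply a proof of this lemma; it merely states it and refers to \cite[Lemma~2.2]{Bot-Csetnek} and \cite[Lemma~2]{Bot-Csetnek-Laszlo}, where exactly the argument you wrote appears.
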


The following lemma, which is an extension of \cite[Lemma 2.3]{Bot-Csetnek} (see, also \cite[Lemma 3]{Bot-Csetnek-Laszlo}), is of interest by its own. 

\begin{lem}
	\label{lem:conv-ext}
	Let $\left\lbrace a^{k} := \left( a_{1}^{k} , a_{2}^{k} , \ldots , a_{N}^{k} \right) \right\rbrace _{k \geq 0}$ be a sequence in $\sR^N_{+}$ and $\left\lbrace \delta_{k} \right\rbrace _{k \geq 0}$ a sequence in $\sR$ such that
	\begin{equation}
	\label{sum:hypo}
	\left\langle \mathbbm{1} , a^{k+1} \right\rangle \leq \left\langle c_{0} , a^{k} \right\rangle + \left\langle c_{1} , a^{k-1} \right\rangle + \left\langle c_{2} , a^{k-2} \right\rangle + \delta_{k} \  \forall k \geq 2,
\end{equation}
	where $c_{0} := \left( c_{0,1} , c_{0,2} , \ldots , c_{0,N} \right) \in \sR^{N}$, $c_{1} := \left( c_{1,1} , c_{1,2} , \ldots , c_{1,N} \right) \in \sR_{+}^{N}$ and $c_{2} := \left( c_{2,1} , c_{2,2} , \ldots , c_{2,N} \right) \in \sR_{+}^{N}$ fulfill $c_{0} + c_{1} + c_{2} < \mathbbm{1}$. 
	Assume further that there exists $\widebar{\delta} \geq 0$ such that for every $\overline{K} \geq \underline{K} \geq 2$
	\begin{equation*}
	\mysum_{k = \underline{K}}^{\overline{K}} \delta_{k} \leq \widebar{\delta} .
	\end{equation*}
	Then, for every $i = 1, \ldots, N$, it holds
	\begin{equation*}
	\mysum_{k \geq 0} a_{i}^{k} < + \infty.
	\end{equation*}
	In particular, for every $i = 1, \ldots, N$  and every $\overline{K} \geq \underline{K} \geq 2$, it holds
	\begin{equation}
	\label{sum:upper}
	\mysum_{k = \underline{K}}^{\overline{K}} a_{i}^{k} 
	\leq \dfrac{\mysum_{j=1}^{N} \left[ \left( 1 - c_{0,j} - c_{1,j} \right) a_{j}^{\underline{K}} + \left( 1 - c_{0,j} \right) a_{j}^{\underline{K}+1} + a_{j}^{\underline{K}+2} \right] +  \widebar{\delta}}{1 - c_{0,i} - c_{1,i} - c_{2,i}} .
	\end{equation}
\end{lem}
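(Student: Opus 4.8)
The plan is to sum the recursion \eqref{sum:hypo} over a window $\underline{K} \le k \le \overline{K}$ and exploit the condition $c_0 + c_1 + c_2 < \mathbbm{1}$ to absorb the shifted terms $a^{k-1}$ and $a^{k-2}$ back into the left-hand side. First I would fix $\overline{K} \ge \underline{K} \ge 2$ and add \eqref{sum:hypo} for $k = \underline{K}, \ldots, \overline{K}$. On the right-hand side, the three sums $\sum_k \langle c_0, a^k \rangle$, $\sum_k \langle c_1, a^{k-1} \rangle$, $\sum_k \langle c_2, a^{k-2} \rangle$ can be re-indexed so that, up to boundary corrections, each becomes a sum of the form $\sum_{k=\underline{K}}^{\overline{K}} \langle c_j, a^k \rangle$ (for $j=0$ exactly, for $j=1$ with the extra term $\langle c_1, a^{\underline{K}-1}\rangle$ and missing $\langle c_1, a^{\overline{K}}\rangle$, and similarly for $j=2$). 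Collecting, the leading-order part of the right-hand side is $\sum_{k=\underline{K}}^{\overline{K}} \langle c_0 + c_1 + c_2, a^k \rangle$, and moving it to the left yields
\[
\sum_{k=\underline{K}}^{\overline{K}} \left\langle \mathbbm{1} - c_0 - c_1 - c_2, a^k \right\rangle \le (\text{boundary terms}) + \sum_{k=\underline{K}}^{\overline{K}} \delta_k \le (\text{boundary terms}) + \widebar{\delta}.
\]
Here I would use $a^k \in \sR^N_+$ and $c_0 + c_1 + c_2 < \mathbbm{1}$ (so $1 - c_{0,i} - c_{1,i} - c_{2,i} > 0$ for each $i$) to deduce, by keeping only the $i$-th coordinate on the left and bounding the rest by zero, the bound \eqref{sum:upper} with the denominator $1 - c_{0,i} - c_{1,i} - c_{2,i}$.

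The care needed is bookkeeping the boundary terms. Re-indexing $\sum_{k=\underline{K}}^{\overline{K}}\langle c_1, a^{k-1}\rangle = \sum_{k=\underline{K}-1}^{\overline{K}-1}\langle c_1, a^k\rangle$ produces an extra $\langle c_1, a^{\underline{K}-1}\rangle$ at the low end and drops $\langle c_1, a^{\overline{K}}\rangle$ at the high end; since $c_1 \ge 0$ and $a^k \ge 0$, dropping a term at the high end only helps, while the extra low-end term must be retained. Similarly $\sum_{k=\underline{K}}^{\overline{K}}\langle c_2, a^{k-2}\rangle = \sum_{k=\underline{K}-2}^{\overline{K}-2}\langle c_2, a^k\rangle$ contributes extra terms $\langle c_2, a^{\underline{K}-2}\rangle + \langle c_2, a^{\underline{K}-1}\rangle$ at the low end. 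To get the clean form stated, I would split $a^{\underline{K}-1}$ and $a^{\underline{K}-2}$ off the left-hand sum as well — i.e. write the telescoped inequality so that the surviving positive left-hand contributions are indexed $k \ge \underline{K}$, and all terms with index $\underline{K}-1$, $\underline{K}-2$ are moved to the right. Matching coefficients coordinatewise: $a_j^{\underline{K}}$ picks up coefficient $1 - c_{0,j} - c_{1,j}$ (it is hit by $\langle\mathbbm 1,\cdot\rangle$ on the left once as the $k=\underline{K}$ term after the shift, minus the $c_0$ and the shifted-$c_1$ contributions — wait, more carefully: the summand $a^{\underline K}$ appears on the left from $\langle \mathbbm 1, a^{k+1}\rangle$ at $k = \underline K - 1$, but that is outside our summation range, so instead I realize the correct derivation sums $k$ from $\underline K$ to $\overline K$ on \eqref{sum:hypo}, giving $\langle\mathbbm 1, a^{k+1}\rangle$ for $k+1$ from $\underline K + 1$ to $\overline K + 1$; the terms $a^{\underline K}, a^{\underline K +1}, a^{\underline K+2}$ then appear only on the right, with coefficients $1 - c_{0,j} - c_{1,j}$, $1 - c_{0,j}$, and $1$ respectively after cancellation against the right-hand shifted sums, and one uses $a^k \ge 0$ to discard the unmatched high-index right-hand terms $-\langle c_1, a^{\overline K}\rangle$, $-\langle c_2, a^{\overline K - 1}\rangle$, $-\langle c_2, a^{\overline K}\rangle$). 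This coefficient matching is exactly the numerator of \eqref{sum:upper}, so the displayed bound follows.

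Finally, since the right-hand side of \eqref{sum:upper} does not depend on $\overline{K}$, letting $\overline{K} \to +\infty$ and using the monotone convergence of partial sums of the nonnegative series $\sum_k a_i^k$ gives $\sum_{k\ge 0} a_i^k < +\infty$ for each $i$ (the finitely many terms $k = 0, 1$ are harmless). The main obstacle is purely the careful re-indexing and sign-tracking of the boundary contributions at $k = \underline{K}-1, \underline{K}-2$ and $k = \overline{K}-1, \overline{K}$; once the telescoping is organized so that only nonnegative quantities are discarded, the coefficient identification is mechanical. A secondary subtlety is that \eqref{sum:hypo} is only assumed for $k \ge 2$, which is why the window is required to satisfy $\underline{K} \ge 2$ and why the indices $\underline{K}-1 \ge 1$, $\underline{K}-2 \ge 0$ appearing in the boundary terms are legitimate.
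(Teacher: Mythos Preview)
Your overall strategy---sum the recursion over a window, re-index, move the $\langle c_0+c_1+c_2,\cdot\rangle$ block to the left, and drop nonnegative terms---is exactly the paper's approach, and it does prove summability. But your bookkeeping does not produce the specific bound \eqref{sum:upper}. If you sum \eqref{sum:hypo} over $k=\underline{K},\ldots,\overline{K}$ as you propose, the low-end boundary terms that survive on the right are $\langle \mathbbm{1},a^{\underline{K}}\rangle$, $\langle c_1+c_2,a^{\underline{K}-1}\rangle$ and $\langle c_2,a^{\underline{K}-2}\rangle$, i.e.\ they sit at indices $\underline{K},\underline{K}-1,\underline{K}-2$, \emph{not} at $\underline{K},\underline{K}+1,\underline{K}+2$. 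Your claim that ``$a^{\underline{K}},a^{\underline{K}+1},a^{\underline{K}+2}$ appear only on the right with coefficients $1-c_{0,j}-c_{1,j}$, $1-c_{0,j}$, $1$'' is simply false for this summation range: $a^{\underline{K}+1}$ and $a^{\underline{K}+2}$ both appear on the \emph{left} (as $\langle\mathbbm{1},a^{k+1}\rangle$ for $k=\underline{K}$ and $k=\underline{K}+1$), and the coefficients you quote do not arise.

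The fix, which is what the paper does, is to sum \eqref{sum:hypo} over $k=\underline{K}+2,\ldots,\overline{K}$ instead (treating the cases $\overline{K}\in\{\underline{K},\underline{K}+1\}$ separately as trivial). Then the left-hand side becomes $\langle\mathbbm{1},\sum_{k=\underline{K}+3}^{\overline{K}+1}a^{k}\rangle$, and re-expressing every sum in terms of $\sum_{k=\underline{K}}^{\overline{K}}a^{k}$ leaves exactly the boundary terms $\langle\mathbbm{1}-c_0-c_1,a^{\underline{K}}\rangle+\langle\mathbbm{1}-c_0,a^{\underline{K}+1}\rangle+\langle\mathbbm{1},a^{\underline{K}+2}\rangle$ on the right, matching the numerator of \eqref{sum:upper}. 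So the error is just in the choice of summation window; shift it by two and your argument goes through verbatim.
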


\begin{proof}
	Fix $\overline{K} \geq \underline{K} \geq 2$. If $\overline{K} = \underline{K}$ or $\overline{K} = \underline{K} + 1$, then \eqref{sum:upper} holds automatically. Assume now that $\overline{K} \geq \underline{K} + 2$. Summing up the inequality in \eqref{sum:hypo} for $k =  \underline{K} + 2 , \cdots , \overline{K}$, we obtain
	\begin{equation}
	\label{sum:-up}
	\left\langle \mathbbm{1}, \mysum_{k=\underline{K}+2}^{\overline{K}} a^{k+1} \right\rangle 
	\leq \left\langle c_{0} , \mysum_{k=\underline{K}+2}^{\overline{K}} a^{k} \right\rangle 
	+ \left\langle c_{1} , \mysum_{k=\underline{K}+2}^{\overline{K}} a^{k-1} \right\rangle 
	+ \left\langle c_{2} , \mysum_{k=\underline{K}+2}^{\overline{K}} a^{k-2} \right\rangle 
	+ \mysum_{k=\underline{K}+2}^{\overline{K}} \delta_{k} .	
	\end{equation}	
	Since
	\begin{align*}
	\mysum_{k = \underline{K}+2}^{\overline{K}} a^{k+1} 	& 
	= \mysum_{k = \underline{K}+3}^{\overline{K}+1} a^{k} 
	= \mysum_{k = \underline{K}}^{\overline{K}} a^{k} + a^{\overline{K}+1} - a^{\underline{K}} - a^{\underline{K} + 1} - a^{\underline{K}+2}\\
	\mysum_{k = \underline{K}+2}^{\overline{K}} a^{k} 		& 
	= \mysum_{k = \underline{K}}^{\overline{K}} a^{k} - \left( a^{\underline{K}} + a^{\underline{K}+1} \right) \\
	\mysum_{k = \underline{K}+2}^{\overline{K}} a^{k-1} 	& 
	= \mysum_{k = \underline{K} + 1}^{\overline{K}-1} a^{k} 
	= \mysum_{k = \underline{K}}^{\overline{K}} a^{k} - \left( a^{\underline{K}} + a^{\overline{K}} \right) \\
	\mysum_{k = \underline{K}+2}^{\overline{K}} a^{k-2} 	& 
	= \mysum_{k = \underline{K}}^{\overline{K} - 2} a^{k} 
	= \mysum_{k = \underline{K}}^{\overline{K}} a^{k} - \left( a^{\overline{K}-1} + a^{\overline{K}} \right) , 
	\end{align*}
	the inequality in \eqref{sum:-up} can be rewritten as
	\begin{align*}
	& \left\langle \mathbbm{1} , \mysum_{k=\underline{K}}^{\overline{K}} a^{k} \right\rangle 
	+ \left\langle \mathbbm{1} , a^{\overline{K}+1} - a^{\underline{K}} - a^{\underline{K}+1} - a^{\underline{K}+2} \right\rangle 
	\leq \left\langle c_{0} , \mysum_{k=\underline{K}}^{\overline{K}} a^{k} \right\rangle 
	- \left\langle c_{0} , a^{\underline{K}} + a^{\underline{K}+1} \right\rangle \\
	& \ + \left\langle c_{1} , \mysum_{k=\underline{K}}^{\overline{K}} a^{k} \right\rangle 
	- \left\langle c_{1} , a^{\underline{K}} + a^{\overline{K}} \right\rangle
	+ \left\langle c_{2} , \mysum_{k=\underline{K}}^{\overline{K}} a^{k} \right\rangle
	- \left\langle c_{2} , a^{\overline{K}-1} + a^{\overline{K}} \right\rangle
	+ \mysum_{k=\underline{K}+2}^{\overline{K}} \delta_{k},
	\end{align*}
	which further implies
	\begin{align*}
	\begin{split}
	\mysum_{j=1}^{N} \left[ \left( 1 - c_{0,j} - c_{1,j} - c_{2,j} \right) \mysum_{k=\underline{K}}^{\overline{K}} a_{j}^{k} \right] & = \left\langle \mathbbm{1} - c_{0} - c_{1} - c_{2} , \mysum_{k = \underline{K}}^{\overline{K}} a^{k} \right\rangle \\
	& \leq \left\langle \mathbbm{1} - c_{0} - c_{1} , a^{\underline{K}} \right\rangle + \left\langle \mathbbm{1} - c_{0} , a^{\underline{K}+1} \right\rangle + \left\langle \mathbbm{1}, a^{\underline{K}+2} \right\rangle + \mysum_{k=\underline{K}+2}^{\overline{K}} \delta_{k} \\
	& = \mysum_{j=1}^{N} \left[ \left( 1 - c_{0,j} - c_{1,j} \right) a_{j}^{\underline{K}} + \left( 1 - c_{0,j} \right) a_{j}^{\underline{K}+1} + a_{j}^{\underline{K}+2} \right] + \mysum_{k=\underline{K}+2}^{\overline{K}} \delta_{k}.
	\end{split}
	\end{align*}
Hence, for every $i = 1 , \ldots , N$, it holds
	\begin{equation*}
	\left( 1 - c_{0,i} - c_{1,i} - c_{2,i} \right) \mysum_{k=\underline{K}}^{\overline{K}} a_{i}^{k} 
	\leq \mysum_{j=1}^{N} \left[ \left( 1 - c_{0,j} - c_{1,j} \right) a_{j}^{\underline{K}} + \left( 1 - c_{0,j} \right) a_{j}^{\underline{K}+1} + a_{j}^{\underline{K}+2} \right] +  \widebar{\delta}
	\end{equation*}
	and the conclusion follows by taking into consideration that $c_{0} + c_{1} + c_{2} < \mathbbm{1}$.
\end{proof}

\section{A proximal ADMM algorithm and a proximal linearized ADMM algorithm in the nonconvex setting}\label{sec:main}



\newcommand{\Cact}{C_{2}}

\newcommand{\Cmu}{C_{\mu}}
\newcommand{\Tdecy}{T_{1}}
\newcommand{\Tdecx}{T_{0}}



\newcommand{\Cdecxx}{C_{1}}
\newcommand{\Cdecxa}{C_{1,1}}
\newcommand{\Cdecxb}{C_{1,2}}

\newcommand{\Cdecx}{C_{0}}
\newcommand{\Cdecxxa}{C_{0,1}}
\newcommand{\Cdecxxb}{C_{0,2}}


\newcommand{\Cesty}{T_{2}}

\newcommand{\Cestx}{C_{3}}

\newcommand{\Cestxx}{C_{4}}


\newcommand{\CaLrx}{C_{5}}
\newcommand{\CaLrz}{C_{6}}
\newcommand{\CaLry}{C_{7}}


\newcommand{\CaHkx}{C_{8}}
\newcommand{\CaHkz}{C_{9}}
\newcommand{\CaHky}{C_{10}}


\newcommand{\Csubx}{C_{11}}
\newcommand{\Csuby}{C_{12}}
\newcommand{\Csubyy}{C_{13}}


\newcommand{\Crec}{C_{23}}
\newcommand{\Citex}{C_{24}}
\newcommand{\Citey}{C_{25}}
\newcommand{\Citez}{C_{26}}


\newcommand{\CaHkbx}{C_{14}}
\newcommand{\CaHkby}{C_{15}}
\newcommand{\CaHkbyy}{C_{16}}


\newcommand{\Clower}{C_{17}}
\newcommand{\Cupper}{C_{18}}
\newcommand{\Crecb}{C_{19}}
\newcommand{\Citebx}{C_{20}}
\newcommand{\Citeby}{C_{21}}
\newcommand{\Citebz}{C_{22}}

In this section we propose two proximal ADMM algorithms for solving the optimization problem \eqref{intro:problem} and study their convergence behaviour. A central role will be played by the augmented Lagrangian associated with
problem \eqref{intro:problem}, which is defined for every $r > 0$ as
\begin{equation*}
\Lr \colon \sR^{n} \times \sR^{m} \times \sR^{m} \to \sR \cup \left\lbrace + \infty \right\rbrace, \ \Lr \left( x , z , y \right) = g \left( z \right) +  h \left( x \right) +  
\left\langle y , A x - z \right\rangle + \dfrac{r}{2} \left\lVert A x - z \right\rVert^{2}.
\end{equation*}

\subsection{General formulations and full proximal splitting algorithms as particular instances}
\label{subsec:formula}

\begin{algo}
	\label{algo:al.1}	
	Let be the matrix sequences $\left\lbrace \M_{1}^{k} \right\rbrace_{k \geq 0} \in \sS^{n}_{+}$ , $\left\lbrace \M_{2}^{k} \right\rbrace_{k \geq 0} \in \sS^{m}_{+}$, $r > 0$ and $0 < \rho < 2$. For a given starting vector $\left( x^{0} , z^{0} , y^{0} \right) \in \sR^{n} \times \sR^{m} \times \sR^{m}$, generate the sequence $\left\lbrace \left( x^{k} , z^{k} , y^{k} \right) \right\rbrace _{k \geq 0}$ for every $k \geq 0$ as:
	\begin{subequations} 
		\begin{align}		
		\begin{split}
		\label{al.1:algo:z}
		z^{k+1} & \in \arg\min\limits_{z \in \sR^{m}} \left\lbrace \Lr \left( x^{k} , z , y^{k} \right) + \dfrac{1}{2} \left\lVert z - z^{k} \right\rVert _{\M_{2}^{k}} ^{2} \right\rbrace \\ 
		& = \arg\min\limits_{z \in \sR^{m}} \left\lbrace g \left( z \right) + \left\langle y^{k} , A x^{k} - z \right\rangle + \dfrac{r}{2} \left\lVert A x^{k} - z \right\rVert ^{2} + \dfrac{1}{2} \left\lVert z - z^{k} \right\rVert _{\M_{2}^{k}} ^{2} \right\rbrace , 
		\end{split}
		\\		
		\begin{split}
		\label{al.1:algo:x}
		x^{k+1} & \in \arg\min\limits_{x \in \sR^{n}} \left\lbrace \Lr \left( x , z^{k+1} , y^{k} \right) + \dfrac{1}{2} \left\lVert x - x^{k} \right\rVert _{\M_{1}^{k}} ^{2} \right\rbrace \\
		& = \arg\min\limits_{x \in \sR^{n}} \left\lbrace h \left( x \right) + \left\langle y^{k} , A x - z^{k+1} \right\rangle + \dfrac{r}{2} \left\lVert A x - z^{k+1} \right\rVert ^{2} + \dfrac{1}{2} \left\lVert x - x^{k} \right\rVert _{\M_{1}^{k}} ^{2} \right\rbrace ,
		\end{split}
		\\
		\label{al.1:algo:y}
		y^{k+1} & := y^{k} + \rho r \left( A x^{k+1} - z^{k+1} \right) . 
		\end{align}	
	\end{subequations}
\end{algo}

Let $\left\lbrace t_{k} \right\rbrace _{k \geq 0}$ be a sequence of positive real numbers such that $t_{k} \geq r \left\lVert A \right\rVert ^{2}$, and $\M_{1}^{k} := t_{k} \Id - r A^{*} A$  and $\M_{2}^{k} := \0$ for every $k \geq 0$. In this particular case
Algorithm \ref{algo:al.1} becomes an iterative scheme which generates a sequence $\left\lbrace \left( x^{k} , z^{k} , y^{k} \right) \right\rbrace _{k \geq 0}$ for every $k \geq 0$ as: 
	\begin{align*}	
	z^{k+1} & \in \arg\min\limits_{z \in \sR^{m}} \left\lbrace g \left( z \right) + \dfrac{r}{2} \left\lVert z - A x^{k} - \dfrac{1}{r} y^{k} \right\rVert ^{2} \right\rbrace , \\
	x^{k+1} & \in \arg\min\limits_{x \in \sR^{n}} \left\lbrace h \left( x \right) + \dfrac{t_{k}}{2} \left\lVert x - x^{k} + \frac{1}{t_{{k}}} A^{*} \left[ y^{k} + r \left( A x^{k} - z^{k+1} \right) \right] \right\rVert ^{2} \right\rbrace , \\
	y^{k+1} & := y^{k} + \rho r \left( A x^{k+1} - z^{k+1} \right) .
	\end{align*}

Recall that the \emph{proximal point operator with parameter $\gamma >0$} of a proper and lower semicontinuous function $\Psi \colon \sR^{N} \to \sR \cup \left\lbrace + \infty \right\rbrace$ is the set-valued operator defined as (\cite{Moreau})
\begin{equation*}
 \prox_{\gamma \Psi} : \sR^N \mapsto 2^{\sR^N}, \quad  \prox_{\gamma \Psi} \left( x \right) = \arg\min\limits_{y \in \sR^{N}} \left\lbrace \Psi \left( y \right) + \dfrac{1}{2 \gamma} \left\lVert x - y \right\rVert ^{2} \right\rbrace.
\end{equation*}

The above particular instance of Algorithm  \ref{algo:al.1} is an iterative scheme formulated in the spirit of full splitting numerical methods; in other words, the functions $g$ and $h$ are evaluated by their proximal operators, while the linear operator $A$ and its adjoint operator are evaluated by simple forward steps. Exact formulas for the proximal operator are available not only for large classes of convex functions (\cite{Beck,Combettes-Wajs}), but also for many nonconvex functions occurring in applications (\cite{Attouch-Bolte-Redont-Soubeyran, Hare-Sagastizabal, Lewis-Malick}).

The second algorithm that we propose in this paper replaces for every $k \geq 0$ the function $h$ in the definition of $x^{k+1}$ by its linearization at $x^k$.

\begin{algo}
	\label{algo:al.2}	
	Let be the matrix sequences $\left\lbrace \M_{1}^{k} \right\rbrace_{k \geq 0} \in \sS^{n}_{+}$ , $\left\lbrace \M_{2}^{k} \right\rbrace_{k \geq 0} \in \sS^{m}_{+}$, $r > 0$ and $0 < \rho < 2$. For a given starting vector $\left( x^{0} , z^{0} , y^{0} \right) \in \sR^{n} \times \sR^{m} \times \sR^{m}$, generate the sequence $\left\lbrace \left( x^{k} , z^{k} , y^{k} \right) \right\rbrace _{k \geq 0}$ for every $k \geq 0$ as:
	\begin{subequations} 
		\begin{align}
		\label{al.2:algo:z}
		z^{k+1} & \in \arg\min\limits_{z \in \sR^{m}} \left\lbrace g \left( z \right) + \left\langle y^{k} , A x^{k} - z \right\rangle + \dfrac{r}{2} \left\lVert A x^{k} - z \right\rVert ^{2} + \dfrac{1}{2} \left\lVert z - z^{k} \right\rVert _{\M_{2}^{k}} ^{2} \right\rbrace , \\
		\label{al.2:algo:x}
		x^{k+1} & \in \arg\min\limits_{x \in \sR^{n}} \left\lbrace \left\langle x - x^{k} , \nabla h \left( x^{k} \right) \right\rangle + \left\langle y^{k} , A x - z^{k+1} \right\rangle + \dfrac{r}{2} \left\lVert A x - z^{k+1} \right\rVert ^{2} + \dfrac{1}{2} \left\lVert x - x^{k} \right\rVert _{\M_{1}^{k}} ^{2} \right\rbrace , \\	
		\label{al.2:algo:y}
		y^{k+1} & := y^{k} + \rho r \left( A x^{k+1} - z^{k+1} \right). 
		\end{align}	
	\end{subequations}
\end{algo}
Due to the presence of the variable metric inducing matrix sequences, Algorithm \ref{algo:al.2} represents a unifying scheme for several linearized ADMM algorithms from the literature (see \cite{Lin-Liu-Li,Liu-Shen-Gu,Ouyang-Chen-Lan-Pasiliao,Ren-Lin,Xu-Wu,Yang-Yuan}). By choosing as above $\M_{1}^{k} := t_{k} \Id - r A^{*} A$, where $t_k$ is positive such that $t_{k} \geq r \left\lVert A \right\rVert ^{2}$, and $\M_{2}^{k} := \0$, for every $k \geq 0$, 
Algorithm \ref{algo:al.2} translates for every $k \geq 0$ into:
	\begin{align*}
	z^{k+1} & \in \arg\min\limits_{z \in \sR^{m}} \left\lbrace g \left( z \right) + \dfrac{r}{2} \left\lVert z - A x^{k} - \dfrac{1}{r} y^{k} \right\rVert ^{2} \right\rbrace , \\
	x^{k+1} & := x^{k} - \frac{1}{t_k} \left( \nabla h \left( x^{k} \right) + A^{*} \left[ y^{k} + r \left( A x^{k} - z^{k+1} \right) \right] \right) , \\
	y^{k+1} & := y^{k} + \rho r \left( A x^{k+1} - z^{k+1} \right).
	\end{align*}	
In this iterative scheme  the smooth term is evaluated via a gradient step, which is an improvement with respect to other nonconvex ADMM algorithms, such as \cite{Wang-Yin-Zeng, Yang-Pong-Chen}, where the smooth function is involved in a subproblem, which may be difficult to solve, unless it can be reformulated as a proximal step (see \cite{Li-Pong}).

We will carry out a parallel convergence analysis for Algorithm \ref{algo:al.1} and Algorithm \ref{algo:al.2} in the following setting.

\begin{assume}\label{ass}
We assume that 
\begin{enumerate}
	\item $g$ and $h$ are bounded from below;
	
	\item $A$ is surjective and thus the constant 
	\begin{equation*}
	\Tdecx := \begin{cases}
	\dfrac{1}{\lambda_{\min}(AA^*) \rho}, 	& \textrm{ if } 0 < \rho \leq 1 , \\
	\dfrac{\rho}{\lambda_{\min}(AA^*) \left( 2 - \rho \right) ^{2}}, 			& \textrm{ if } 1 < \rho < 2, 	\end{cases}
	\end{equation*}
	is well-defined;
	
\item $ \mu_{1} := \sup\limits_{k \geq 0} \left\lVert \M_{1}^{k} \right\rVert < + \infty$ and $\mu_{2} := \sup\limits_{k \geq 0} \left\lVert \M_{2}^{k} \right\rVert < + \infty$;

	\item $r >0, \rho \in (0,2)$ and $\mu_{1} \geq 0$  are such that
\end{enumerate}
\begin{equation}
	\label{assume:r:positive}
	r \geq 4 \Tdecx L > 0
	\end{equation}
and
	\begin{equation}
\label{assume:vm:positive}
2 \M_{1}^{k} + r A^{*} A \succcurlyeq \left( L + \dfrac{C_{\M}}{r} \right) \Id \quad \forall k \geq 0,
\end{equation}
where
\begin{equation*}
C_{\M} := \left \{\begin{array}{rl} \left( 6\mu_{1} ^{2} + 4\left( L + \mu_{1} \right) ^{2} \right) \Tdecx, & \! \mbox{for Algorithm} \ \ref{algo:al.1}, \\
\left( 4\mu_{1} ^{2} + 6\left( L + \mu_{1} \right) ^{2} \right) \Tdecx, & \! \mbox{for Algorithm} \ \ref{algo:al.2}.
  \end{array} \right.
\end{equation*}
\end{assume}

\begin{rmk}
\begin{enumerate}
		\item It has been noticed also by other authors (see, for instance, \cite{Bolte-Sabach-Teboulle:MOOR}) that the surjectivity of the linear operator is an assumption which at this moment cannot be omitted when aiming to prove convergence for nonconvex Lagrangian based algorithms. 
		
		\item \label{rmk:para-choice}
		In the following we discuss possible choices of the matrix sequence $\left\lbrace \M_{1}^{k} \right\rbrace _{k \geq 0}$ which fulfil Assumption \ref{ass}:
		
		\begin{enumerate}[leftmargin=\parindent]
			\item 
			\label{rmk:para-choice:i}
			If $\sup\limits_{k \geq 0} \left\lVert \M_{1}^{k} \right\rVert = \mu_{1} > \dfrac{L}{2}$, then, for every
			\begin{equation*}
			r \geq \max \left\lbrace 4 \Tdecx L , \dfrac{C_{\M}}{2 \mu_{1} - L} \right\rbrace > 0 ,
			\end{equation*}
			there exists $\alpha_{1} > 0$ such that 
			\begin{equation*}
			\mu_{1} \geq \alpha_{1} \geq \dfrac{1}{2} \left( L + \frac{C_{\M}}{r} \right) > 0.
			\end{equation*}
The inequality in \eqref{assume:vm:positive} is ensured for $\M_{1}^{k}$ chosen such that
			\begin{equation*}
			\mu_1 \Id \succcurlyeq \M_{1}^{k} \succcurlyeq \alpha_1 \Id \quad \forall k \geq 0 .
			\end{equation*}
			
			\item 
			\label{rmk:para-choice:iii}
			If $A$ is assumed to be also injective, then $\lambda_{\min} \left( A^{*} A \right) >0.$  By choosing
			\begin{equation*}
			r \geq \max \left\lbrace 4 \Tdecx L , \dfrac{L + \sqrt{L^{2} + 4 \lambda_{\min} \left( A^{*} A \right) C_{\M}}}{2 \lambda_{\min} \left( A^{*} A \right)} \right\rbrace > 0 ,
			\end{equation*}
			it follows that $r^2 \lambda_{\min} \left( A^{*} A \right) - rL - C_{\M} \geq 0$. Thus,
			\begin{equation*}
			r A^{*} A - \left( L + r^{-1} C_{\M} \right) \Id \succcurlyeq 0,
			\end{equation*}
			and  \eqref{assume:vm:positive}  holds for an arbitrary sequence of symmetric and positive semidefinite matrices $\left\lbrace \M_{1}^{k} \right\rbrace _{k \geq 0}$. A possible choice is $\M_{1}^{k} = 0$, which, together $\M_{2}^{k} = 0$, for every $k \geq 0$, allows us to recover the classical ADMM algorithm and the linearized ADMM algorithm as particular instances of our iterative schemes.
			
			\item
			\label{rmk:para-choice:ii}
For $t >0$, we take  $\M_{1}^{k} := t \Id - r A^{*} A$ for every $k \geq 0$ (see also Section 6.3 in \cite{Bolte-Sabach-Teboulle:MOOR}). Then
			\begin{equation*}
			\mu_{1} = \left\lVert t \Id - r A^{*} A \right\rVert = \lambda_{\max} \left( t \Id - r A^{*} A \right) = t - r \lambda_{\min} \left( A^{*} A \right) .
			\end{equation*}
Condition \eqref{assume:vm:positive} is equivalent to
\begin{align*}
2 t - r \left\lVert A \right\rVert ^{2} - \left( L + \dfrac{C_{\M}}{r} \right) \geq  0
\end{align*}
and is guaranteed for both algorithms when
\begin{align*}
2 t - r \left\lVert A \right\rVert ^{2} - \left( L + \dfrac{\left( 4 \mu_{1} ^{2} + 6 \left( L + \mu_{1} \right) ^{2} \right) \Tdecx}{r} \right) \geq 0
\end{align*}
or, equivalently,
			\begin{align*}
10 \Tdecx \mu_{1} ^{2} - 2 \left( r - 6 \Tdecx L \right) \mu_{1} + 6 \Tdecx L^{2} + r^{2} \left( \left\lVert A \right\rVert ^{2} - 2 \lambda_{\min} \left( A^{*} A \right) \right) - Lr \leq 0.
			\end{align*}
This quadratic inequality in $\mu_1 \geq 0$ has nonnegative solutions if, for instance, $r \geq 6T_0L$ (thus \eqref{assume:r:positive} holds) and the reduced discriminant
			\begin{align*}
			\Delta := & \left( r - 6 \Tdecx L \right) ^{2} - 60 \Tdecx ^{2} L^{2} - 10 \Tdecx r^{2} \left( \left\lVert A \right\rVert ^{2} - 2 \lambda_{\min} \left( A^{*} A \right) \right) + 10 \Tdecx Lr \\
			= & \left[ 1 + 10 \Tdecx \left( 2 \lambda_{\min} \left( A^{*} A \right) - \left\lVert A \right\rVert ^{2} \right) \right] r^{2} - 2 \Tdecx Lr - 24 \Tdecx ^{2} L^{2}
			\end{align*}
is nonnegative. This holds true if the condition number of the matrix $A^*A$ fulfils $$\kappa \left( A^{*} A \right) := \dfrac{\lambda_{\max} \left( A^{*} A \right)}{\lambda_{\min} \left( A^{*} A \right)} = \dfrac{\left\lVert A \right\rVert ^{2}}{\lambda_{\min} \left( A^{*} A \right)} \leq 2.$$
In conclusions, if the latter is given, then we can chose an arbitrary
$$r \geq 6T_0L$$
and $t$ such that
			\begin{equation*}
 r \lambda_{\min} \left( A^{*} A \right) \leq t \leq r \lambda_{\min} \left( A^{*} A \right) + \dfrac{1}{10 \Tdecx} \left( r - 6 \Tdecx L + \sqrt{\Delta} \right) .
			\end{equation*}
For a similar choice for the 
		\end{enumerate}
	
	\item When proving convergence and deriving convergence rates for variable metric algorithms designed for convex optimization problems one usually assumes monotonicity for the matrix sequences inducing the variable metrics (see, for instance, \cite{Combettes-Vu-quasi-Fejer, Banert-Bot-Csetnek,Bot-Csetnek-ADMM}). It is worth to mention that  the convergence analysis for both Algorithm \ref{algo:al.1} and Algorithm \ref{algo:al.2} does not require monotonicity assumptions on $\left\lbrace \M_{1}^{k} \right\rbrace _{k \geq 0}$ or $\left\lbrace \M_{2}^{k} \right\rbrace _{k \geq 0}$.
	\end{enumerate}
\end{rmk}

\subsection{Preliminaries of the convergence analysis}
\label{subsec:pre-conv}
Within the setting of Assumption \ref{ass} we will make use of the following constants:
\begin{align*}
\Cdecx := \begin{cases}
L + \dfrac{4 \Tdecx  \left( L + \mu_{1} \right) ^{2}}{r}, & \mbox{for Algorithm} \ \ref{algo:al.1}, \\
L + \dfrac{4 \Tdecx \mu_{1} ^{2}}{r}, & \mbox{for Algorithm} \ \ref{algo:al.2}, \\
\end{cases} \quad \quad \Cdecxx := \begin{cases}
\dfrac{4 \Tdecx \mu_{1} ^{2}}{r}, & \mbox{for Algorithm} \ \ref{algo:al.1},\\
\dfrac{4 \Tdecx \left( L + \mu_{1} \right) ^{2}}{r}, & \mbox{for Algorithm} \ \ref{algo:al.2},\\
\end{cases}
\end{align*}
and 
\begin{equation*}
\Tdecy := \begin{cases}
\dfrac{1 - \rho}{\lambda_{\min}(AA^*)  \rho^{2} r} , 						& \textrm{ if } 0 < \rho \leq 1 , \\
\dfrac{\rho - 1}{\lambda_{\min}(AA^*) \left( 2 - \rho \right) \rho r} , 	& \textrm{ if } 1 < \rho < 2 ,
\end{cases}
\end{equation*}
and we will denote for every $k \geq 0$
\begin{equation*}
\M_{3}^{k}:= 2 \M_{1}^{k} + r A^{*} A - \Cdecx \Id .
\end{equation*}

The following monotonicity result will play a fundamental role in our convergence analysis.
\begin{lem}
	\label{lem:decrease}
	Suppose that Assumption \ref{ass} holds true and let $\left\lbrace \left( x^{k} , z^{k} , y^{k} \right) \right\rbrace _{k \geq 0}$ be a sequence generated by Algorithm \ref{algo:al.1} or Algorithm \ref{algo:al.2}. Then for every $k \geq 1$ it holds:
	\begin{align}
	\label{dec:inq}
	& \Lr \left( x^{k+1} , z^{k+1} , y^{k+1} \right) + \Tdecy \left\lVert A^{*} \left( y^{k+1} - y^{k} \right) \right\rVert ^{2} + \dfrac{1}{2} \left\lVert x^{k+1} - x^{k} \right\rVert _{\M_{3}^{k}}^{2} + \dfrac{1}{2} \left\lVert z^{k+1} - z^{k} \right\rVert _{\M_{2}^{k}}^{2} \nonumber \\
	\leq & \ \Lr \left( x^{k} , z^{k} , y^{k} \right) + \Tdecy \left\lVert A^{*} \left( y^{k} - y^{k-1} \right) \right\rVert ^{2} + \dfrac{\Cdecxx}{2} \left\lVert x^{k} - x^{k-1} \right\rVert^2.
	\end{align}
\end{lem}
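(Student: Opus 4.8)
The plan is to establish one one-step estimate for each of the three blocks of the update, chain them, and then pay for the increase of the augmented Lagrangian caused by the dual step out of the decrease produced by the $x$-step, using the first-order optimality condition of the $x$-subproblem. The argument will run in parallel for Algorithm~\ref{algo:al.1} and Algorithm~\ref{algo:al.2}; the only structural difference is the point at which $\nabla h$ is evaluated in the $x$-subproblem, and this is precisely what forces the roles of $\Cdecxx$ and $\Cdecx$ to be interchanged between the two schemes.

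\emph{The $z$- and $x$-estimates.} Since $z^{k+1}$ minimizes a function that equals $z \mapsto \Lr(x^k,z,y^k) - h(x^k) + \tfrac12\|z-z^k\|_{\M_2^k}^2$, comparing its values at $z^{k+1}$ and $z^k$ gives at once $\Lr(x^k,z^{k+1},y^k) + \tfrac12\|z^{k+1}-z^k\|_{\M_2^k}^2 \le \Lr(x^k,z^k,y^k)$. For the $x$-step I write $\Lr(x^k,z^{k+1},y^k) - \Lr(x^{k+1},z^{k+1},y^k) = (h(x^k)-h(x^{k+1})) + (Q_k(x^k)-Q_k(x^{k+1}))$, where $Q_k(x):=\langle y^k,Ax-z^{k+1}\rangle+\tfrac r2\|Ax-z^{k+1}\|^2$ is quadratic with Hessian $rA^*A$. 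I bound $h(x^k)-h(x^{k+1})$ from below by the semiconvexity inequality \eqref{eq:semi} (with $x:=x^k$, $y:=x^{k+1}$) for Algorithm~\ref{algo:al.1}, resp. by the Descent Lemma \eqref{eq:descent} (again with $x:=x^k$, $y:=x^{k+1}$) for Algorithm~\ref{algo:al.2}, expand $Q_k(x^k)-Q_k(x^{k+1})$ exactly, and substitute the optimality condition of the $x$-subproblem --- namely $\nabla h(x^{k+1})+\nabla Q_k(x^{k+1})+\M_1^k(x^{k+1}-x^k)=0$ for Algorithm~\ref{algo:al.1}, with $\nabla h(x^{k+1})$ replaced by $\nabla h(x^k)$ for Algorithm~\ref{algo:al.2}. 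In both cases the linear term collapses to $\|x^{k+1}-x^k\|_{\M_1^k}^2$ and one obtains
\[
\Lr(x^{k+1},z^{k+1},y^k) + \tfrac12\|x^{k+1}-x^k\|_{2\M_1^k+rA^*A-L\Id}^2 \le \Lr(x^k,z^{k+1},y^k).
\]
Adding the $z$-estimate, using the identity $\Lr(x^{k+1},z^{k+1},y^{k+1}) = \Lr(x^{k+1},z^{k+1},y^k) + \tfrac1{\rho r}\|y^{k+1}-y^k\|^2$ (which follows from $y^{k+1}-y^k=\rho r(Ax^{k+1}-z^{k+1})$), and splitting $2\M_1^k+rA^*A-L\Id=\M_3^k+(\Cdecx-L)\Id$, the claim \eqref{dec:inq} reduces to
\[
\tfrac1{\rho r}\|y^{k+1}-y^k\|^2 + \Tdecy\|A^*(y^{k+1}-y^k)\|^2 - \Tdecy\|A^*(y^k-y^{k-1})\|^2 \le \tfrac{\Cdecx-L}{2}\|x^{k+1}-x^k\|^2 + \tfrac{\Cdecxx}{2}\|x^k-x^{k-1}\|^2.
\]

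\emph{Handling the dual increase --- the crux.} I introduce the auxiliary vector $v^{k}:=(1-\rho^{-1})A^*y^{k-1}+\rho^{-1}A^*y^{k}$, and observe that, via $r(Ax^{k+1}-z^{k+1})=\rho^{-1}(y^{k+1}-y^k)$, the $x$-optimality condition rewrites as $v^{k+1}=-\nabla h(x^{k+1})-\M_1^k(x^{k+1}-x^k)$ (resp. with $\nabla h(x^k)$ for Algorithm~\ref{algo:al.2}). Setting $u^k:=A^*(y^k-y^{k-1})$, this yields simultaneously $v^{k+1}-v^k=(1-\rho^{-1})u^k+\rho^{-1}u^{k+1}$ and, by the $L$-Lipschitz continuity of $\nabla h$ together with $\|\M_1^k\|\le\mu_1$, the estimate $\|v^{k+1}-v^k\|\le (L+\mu_1)\|x^{k+1}-x^k\|+\mu_1\|x^k-x^{k-1}\|$ for Algorithm~\ref{algo:al.1} (and the same with the two right-hand coefficients interchanged for Algorithm~\ref{algo:al.2}). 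Solving $v^{k+1}-v^k=(1-\rho^{-1})u^k+\rho^{-1}u^{k+1}$ for $u^{k+1}=\rho(v^{k+1}-v^k)+(1-\rho)u^k$, using surjectivity of $A$ in the form $\|y^{k+1}-y^k\|^2\le\lambda_{\min}(AA^*)^{-1}\|u^{k+1}\|^2$, and then applying, when $0<\rho\le1$, convexity of $\|\cdot\|^2$ to this convex combination, resp., when $1<\rho<2$, Young's inequality with weight $\varepsilon=(\rho-1)/(2-\rho)$, one obtains after elementary bookkeeping
\[
\tfrac1{\rho r}\|y^{k+1}-y^k\|^2 + \Tdecy\|u^{k+1}\|^2 - \Tdecy\|u^k\|^2 \le \tfrac{\Tdecx}{r}\|v^{k+1}-v^k\|^2,
\]
the definitions of $\Tdecy$ and $\Tdecx$ being exactly tuned so that the $\|u^k\|^2$-coefficient cancels identically and the $\|v^{k+1}-v^k\|^2$-coefficient reduces to $\Tdecx/r$. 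Finally, $\|v^{k+1}-v^k\|^2\le 2(L+\mu_1)^2\|x^{k+1}-x^k\|^2+2\mu_1^2\|x^k-x^{k-1}\|^2$ (and its swapped counterpart for Algorithm~\ref{algo:al.2}), combined with the explicit values of $\Cdecxx$ and $\Cdecx$ for each algorithm, turns $\tfrac{\Tdecx}{r}\|v^{k+1}-v^k\|^2$ into exactly $\tfrac{\Cdecx-L}{2}\|x^{k+1}-x^k\|^2+\tfrac{\Cdecxx}{2}\|x^k-x^{k-1}\|^2$, which closes the chain.

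\emph{Main obstacle.} The genuinely delicate step is the last one. In the convex ADMM the dual increase $\tfrac1{\rho r}\|y^{k+1}-y^k\|^2$ is absorbed by a subgradient inequality for $g$, a tool unavailable here; the increase must instead be covered entirely by the decrease produced by the $x$-step, and the relaxation $\rho\ne1$ forces the auxiliary quantity $\Tdecy\|A^*(y^k-y^{k-1})\|^2$ into the Lyapunov functional together with the case distinction $\rho\le1$ versus $\rho>1$. Everything else is short; the only tedious aspect is verifying that the four constants $\Tdecy,\Tdecx,\Cdecxx,\Cdecx$ interlock exactly as required.
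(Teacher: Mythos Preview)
Your proof is correct and follows essentially the same route as the paper's. Your auxiliary vector $v^{k+1}$ coincides with the paper's $u_1^{k+1}$ (resp.\ $u_2^{k+1}$), your relation $u^{k+1}=\rho(v^{k+1}-v^k)+(1-\rho)u^k$ is exactly the difference of the paper's identity $A^*y^{l+1}=\rho u^{l+1}+(1-\rho)A^*y^l$, and your Young-inequality treatment of the case $1<\rho<2$ with weight $\varepsilon=(\rho-1)/(2-\rho)$ reproduces verbatim the paper's convexity bound $\|u^{k+1}\|^2\le \frac{\rho^2}{2-\rho}\|v^{k+1}-v^k\|^2+(\rho-1)\|u^k\|^2$; everything else (the $z$- and $x$-step estimates, the use of \eqref{eq:semi} versus \eqref{eq:descent} for the two algorithms, the final matching of $\Tdecx/r$ against $\Cdecxx,\Cdecx$) is identical.
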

\begin{proof}
Let $k \geq 1$ be fixed. In both cases the proof builds on showing that the following inequality
	\begin{align}
	\label{dec:pre}
	& \Lr \left( x^{k+1} , z^{k+1} , y^{k+1} \right) + \dfrac{1}{2} \left\lVert x^{k+1} - x^{k} \right\rVert _{2 \M_{1}^{k} + r A^{*} A}^{2} - \dfrac{L}{2} \left\lVert x^{k+1} - x^{k} \right\rVert ^{2} + \dfrac{1}{2} \left\lVert z^{k+1} - z^{k} \right\rVert _{\M_{2}^{k}}^{2} \nonumber \\
	\leq & \ \Lr \left( x^{k} , z^{k} , y^{k} \right) + \dfrac{1}{\rho r} \left\lVert y^{k+1} - y^{k} \right\rVert ^{2}
	\end{align}
is true and on providing afterwards an upper bound for $\dfrac{1}{\rho r} \left\lVert y^{k+1} - y^{k} \right\rVert ^{2}$.
	\begin{enumerate}[leftmargin=\parindent]		
		\item For \emph{Algorithm \ref{algo:al.1}}:
		From \eqref{al.1:algo:z} we have
		\begin{align}
		\label{dec:al.1:arg-g}
		& g \left( z^{k+1} \right) + \left\langle y^{k} , A x^{k} - z^{k+1} \right\rangle + \dfrac{r}{2} \left\lVert A x^{k} - z^{k+1} \right\rVert ^{2} + \dfrac{1}{2} \left\lVert z^{k+1} - z^{k} \right\rVert _{\M_{2}^{k}}^{2} \nonumber \\
		\leq & \ g \left( z^{k} \right) + \left\langle y^{k} , A x^{k} - z^{k} \right\rangle + \dfrac{r}{2} \left\lVert A x^{k} - z^{k} \right\rVert ^{2} .
		\end{align}
		The optimality criterion of \eqref{al.1:algo:x} is
		\begin{equation}
		\label{dec:al.1:opt.con-f}
		\nabla h \left( x^{k+1} \right) = - A^{*} y^{k} - r A^{*} \left( A x^{k+1} - z^{k+1} \right) + \M_{1}^{k} \left( x^{k} - x^{k+1} \right) .
		\end{equation}
		From \eqref{eq:semi-convex} (applied for $z:=x^{k+1}$) we get
		\begin{align}
		\label{dec:al.1:semi-f}
		h \left( x^{k+1} \right) \leq & \ h \left( x^{k} \right) + \left\langle y^{k} , A x^{k} - A x^{k+1} \right\rangle + r \left\langle A x^{k+1} - z^{k+1} , A x^{k} - A x^{k+1} \right\rangle \nonumber \\
		& \ - \left\lVert x^{k+1} - x^{k} \right\rVert _{\M_{1}^{k}}^{2} + \dfrac{L}{2} \left\lVert x^{k+1} - x^{k} \right\rVert ^{2} .
		\end{align}		
		By combining \eqref{al.1:algo:y},  \eqref{dec:al.1:arg-g} and \eqref{dec:al.1:semi-f}, after some rearrangements, we obtain \eqref{dec:pre}.

By using the notation
		\begin{equation}
		\label{defi:u1}
		u_{1}^{l} := - \nabla h \left( x^{l} \right) + \M_{1}^{l-1} \left( x^{l-1} - x^{l} \right) \  \forall l \geq 1
		\end{equation}
		and by taking into consideration \eqref{al.1:algo:y}, we can rewrite \eqref{dec:al.1:opt.con-f} as
		\begin{equation}
		\label{defi:new-y1}
		A^{*} y^{l+1} = \rho u_{1}^{l+1} + \left( 1 - \rho \right) A^{*} y^{l} \ \forall l \geq 0.
		\end{equation}

		\begin{itemize}[leftmargin=\parindent]
			\item \emph{The case $0 < \rho \leq 1$}.
We have
\begin{equation*}
				A^{*} \left( y^{k+1} - y^{k} \right) = \rho \left( u_{1}^{k+1} - u_{1}^{k} \right) + \left( 1 - \rho \right) A^{*} \left( y^{k} - y^{k-1} \right) .
				\end{equation*}
			Since $0 < \rho \leq 1$, the convexity of $\left\lVert \cdot \right\rVert ^{2}$ gives
			\begin{equation*}
			\left\lVert A^{*} \left( y^{k+1} - y^{k} \right) \right\rVert ^{2} \leq \rho \left\lVert u_{1}^{k+1} - u_{1}^{k} \right\rVert ^{2} + \left( 1 - \rho \right) \left\lVert A^{*} \left( y^{k} - y^{k-1} \right) \right\rVert ^{2} 
			\end{equation*}
and from here we get
			\begin{align}
			\label{dec:al.1:mul.p1.pre-bound}
			& \lambda_{\min}(AA^*) \rho \left\lVert y^{k+1} - y^{k} \right\rVert ^{2} \leq \rho \left\lVert A^{*} \left( y^{k+1} - y^{k} \right) \right\rVert ^{2} \nonumber \\
			\leq & \ \rho \left\lVert u_{1}^{k+1} - u_{1}^{k} \right\rVert ^{2} + \left( 1 - \rho \right) \left\lVert A^{*} \left( y^{k} - y^{k-1} \right) \right\rVert ^{2} - \left( 1 - \rho \right) \left\lVert A^{*} \left( y^{k+1} - y^{k} \right) \right\rVert ^{2}.
			\end{align}
By using the Lipschitz continuity of $\nabla h$ we have
			\begin{equation}
			\label{dec:al.1:u-pre-bound}
			\left\lVert u_{1}^{k+1} - u_{1}^{k} \right\rVert \leq \left( L + \mu_{1} \right) \left\lVert x^{k+1} - x^{k} \right\rVert + \mu_{1} \left\lVert x^{k} - x^{k-1} \right\rVert ,
			\end{equation}
thus
			\begin{equation}
			\label{dec:al.1:u-bound}
			\left\lVert u_{1}^{k+1} - u_{1}^{k} \right\rVert ^{2} \leq 2 \left( L + \mu_{1} \right) ^{2} \left\lVert x^{k+1} - x^{k} \right\rVert ^{2} + 2 \mu_{1}^{2} \left\lVert x^{k} - x^{k-1} \right\rVert ^{2} .
			\end{equation}
After plugging \eqref{dec:al.1:u-bound} into \eqref{dec:al.1:mul.p1.pre-bound} we get
			\begin{align}
			\label{dec:al.1:p1}
			\dfrac{1}{\rho r} \left\lVert y^{k+1} - y^{k} \right\rVert ^{2} & \leq \frac{2 \left( L + \mu_{1} \right) ^{2}}{ \lambda_{\min}(AA^*) \rho r} \left\lVert x^{k+1} - x^{k} \right\rVert ^{2} + \frac{2 \mu_{1}^{2}}{\lambda_{\min}(AA^*) \rho r} \left\lVert x^{k} - x^{k-1} \right\rVert ^{2} \nonumber \\
			& \quad + \dfrac{\left( 1 - \rho \right)}{\lambda_{\min}(AA^*) \rho^{2} r} \left\lVert A^{*} \left( y^{k} - y^{k-1} \right) \right\rVert ^{2} - \dfrac{\left( 1 - \rho \right)}{\lambda_{\min}(AA^*) \rho^{2} r} \left\lVert A^{*} \left( y^{k+1} - y^{k} \right) \right\rVert ^{2},
			\end{align}
which, combined with  \eqref{dec:pre}, provides \eqref{dec:inq}.		
			\item \emph{The case $1 < \rho < 2$}.
This time we have from \eqref{defi:new-y1} that
			\begin{equation*}
			A^{*} \left( y^{k+1} - y^{k} \right) = \left( 2 - \rho \right) \dfrac{\rho}{2 - \rho} \left( u_{1}^{k+1} - u_{1}^{k} \right) + \left( \rho - 1 \right) A^{*} \left( y^{k-1} - y^{k} \right) . 
			\end{equation*} 
As $1 < \rho < 2$, the convexity of $\left\lVert \cdot \right\rVert ^{2}$ gives
			\begin{equation*}
			\left\lVert A^{*} \left( y^{k+1} - y^{k} \right) \right\rVert ^{2} \leq \dfrac{\rho^{2}}{2 - \rho} \left\lVert u_{1}^{k+1} - u_{1}^{k} \right\rVert ^{2} + \left( \rho - 1 \right) \left\lVert A^{*} \left( y^{k} - y^{k-1} \right) \right\rVert ^{2}
			\end{equation*}
and from here it follows
			\begin{align}
			\label{dec:al.1:mul.p2.pre-bound}
			&  \lambda_{\min}(AA^*) \left( 2 - \rho \right) \left\lVert y^{k+1} - y^{k} \right\rVert ^{2} \leq \left( 2 - \rho \right) \left\lVert A^{*} \left( y^{k+1} - y^{k} \right) \right\rVert ^{2} \nonumber \\
			\leq & \ \dfrac{\rho^{2}}{2 - \rho} \left\lVert u_{1}^{k+1} - u_{1}^{k} \right\rVert ^{2} + \left( \rho - 1 \right) \left\lVert A^{*} \left( y^{k} - y^{k-1} \right) \right\rVert ^{2} - \left( \rho - 1 \right) \left\lVert A^{*} \left( y^{k+1} - y^{k} \right) \right\rVert ^{2}.
			\end{align}
After plugging \eqref{dec:al.1:u-bound} into \eqref{dec:al.1:mul.p2.pre-bound} we get
			\begin{align}
			\label{dec:al.1:p2}
			\dfrac{1}{\rho r} \left\lVert y^{k+1} - y^{k} \right\rVert ^{2} & \leq \frac{2 \rho \left( L + \mu_{1} \right) ^{2}}{\lambda_{\min}(AA^*) \left( 2 - \rho \right) ^{2} r} \left\lVert x^{k+1} - x^{k} \right\rVert ^{2} + \frac{2 \rho \mu_{1} ^{2}}{\lambda_{\min}(AA^*) \left( 2 - \rho \right) ^{2} r} \left\lVert x^{k} - x^{k-1} \right\rVert ^{2} \nonumber \\
			& \quad + \dfrac{\left( \rho - 1 \right)}{\lambda_{\min}(AA^*) \left( 2 - \rho \right) \rho r} \left\lVert A^{*} \left( y^{k} - y^{k-1} \right) \right\rVert ^{2} \nonumber\\
& \quad - \dfrac{\left( \rho - 1 \right)}{\lambda_{\min}(AA^*) \left( 2 - \rho \right) \rho r} \left\lVert A^{*} \left( y^{k+1} - y^{k} \right) \right\rVert ^{2},
			\end{align}
		\end{itemize}
which, combined with  \eqref{dec:pre}, provides \eqref{dec:inq}.
				
		\item For \emph{Algorithm \ref{algo:al.2}}:
		The optimality criterion of \eqref{al.2:algo:x} is
		\begin{equation}
		\label{dec:al.2:opt.con-f}
		\nabla h \left( x^{k} \right) = - A^{*} y^{k} - r A^{*} \left( A x^{k+1} - z^{k+1} \right) + \M_{1}^{k} \left( x^{k} - x^{k+1} \right) .
		\end{equation}
From \eqref{eq:semi-convex} (applied for $z:=x^k$) we get
		\begin{align}
		\label{dec:al.2:semi-f}
		h \left( x^{k+1} \right) \leq & \ h \left( x^{k} \right) + \left\langle y^{k} , A x^{k} - A x^{k+1} \right\rangle + r \left\langle A x^{k+1} - z^{k+1} , A x^{k} - A x^{k+1} \right\rangle \nonumber \\
		& \ - \left\lVert x^{k+1} - x^{k} \right\rVert _{\M_{1}^{k}}^{2} + \dfrac{L}{2} \left\lVert x^{k+1} - x^{k} \right\rVert ^{2} .
		\end{align}		
Since the definition of $z^{k+1}$ in \eqref{al.2:algo:z} leads also to \eqref{dec:al.1:arg-g}, by combining this inequality with \eqref{dec:al.2:semi-f} and \eqref{al.2:algo:y}, after some rearrangments, \eqref{dec:pre} follows.
By using this time the notation
		\begin{equation}
		\label{defi:u2}
		u_{2}^{l} := - \nabla h \left( x^{l-1} \right) + \M_{1}^{l-1} \left( x^{l-1} - x^{l} \right) \  \forall l \geq 1
		\end{equation}
		and by taking into consideration \eqref{al.2:algo:y}, we can rewrite \eqref{dec:al.2:opt.con-f} as
		\begin{equation}
		\label{defi:new-y2}
		A^{*} y^{l+1} = \rho u_{2}^{l+1} + \left( 1 - \rho \right) A^{*} y^{l} \  \forall l \geq 0.
		\end{equation} 

		\begin{itemize}[leftmargin=\parindent]
			\item \emph{The case $0 < \rho \leq 1$}.
As in \eqref{dec:al.1:mul.p1.pre-bound} we obtain
			\begin{align}
			\label{dec:al.2:mul.p1.pre-bound}
			&  \lambda_{\min}(AA^*) \rho \left\lVert y^{k+1} - y^{k} \right\rVert ^{2} \leq \rho \left\lVert A^{*} \left( y^{k+1} - y^{k} \right) \right\rVert ^{2} \nonumber \\
			\leq & \ \rho \left\lVert u_{2}^{k+1} - u_{2}^{k} \right\rVert ^{2} + \left( 1 - \rho \right) \left\lVert A^{*} \left( y^{k} - y^{k-1} \right) \right\rVert ^{2} - \left( 1 - \rho \right) \left\lVert A^{*} \left( y^{k+1} - y^{k} \right) \right\rVert ^{2} .			
			\end{align}
By using the Lipschitz continuity of $\nabla h$ we have
			\begin{equation}
			\label{dec:al.2:u-pre-bound}
			\left\lVert u_{2}^{k+1} - u_{2}^{k} \right\rVert \leq \mu_{1} \left\lVert x^{k+1} - x^{k} \right\rVert + \left( L + \mu_{1} \right) \left\lVert x^{k} - x^{k-1} \right\rVert,
			\end{equation}
thus
			\begin{equation}
			\label{dec:al.2:u-bound}
			\left\lVert u_{2}^{k+1} - u_{2}^{k} \right\rVert ^{2} \leq 2 \mu_{1}^{2} \left\lVert x^{k+1} - x^{k} \right\rVert ^{2} + 2 \left( L + \mu_{1} \right) ^{2} \left\lVert x^{k} - x^{k-1} \right\rVert ^{2} .
			\end{equation}
After plugging \eqref{dec:al.2:u-bound} into \eqref{dec:al.2:mul.p1.pre-bound} it follows
			\begin{align}
			\label{dec:al.2:p1}
			\dfrac{1}{\rho r} \left\lVert y^{k+1} - y^{k} \right\rVert ^{2} & \leq \frac{2 \mu_{1}^{2}}{\lambda_{\min}(AA^*) \rho r} \left\lVert x^{k+1} - x^{k} \right\rVert ^{2} + \frac{2 \left( L + \mu_{1} \right) ^{2}}{\lambda_{\min}(AA^*) \rho r} \left\lVert x^{k} - x^{k-1} \right\rVert ^{2} \nonumber \\
			& \quad + \dfrac{\left( 1 - \rho \right)}{\lambda_{\min}(AA^*) \rho^{2} r} \left\lVert A^{*} \left( y^{k} - y^{k-1} \right) \right\rVert ^{2} - \dfrac{\left( 1 - \rho \right)}{\lambda_{\min}(AA^*) \rho^{2} r} \left\lVert A^{*} \left( y^{k+1} - y^{k} \right) \right\rVert ^{2},
			\end{align}
which, combined with  \eqref{dec:pre}, provides \eqref{dec:inq}.			
\item \emph{The case $1 < \rho < 2$}.
			As in \eqref{dec:al.1:mul.p2.pre-bound} we obtain
			\begin{align}
			\label{dec:al.2:mul.p2.pre-bound}
			& \lambda_{\min}(AA^*)  \left( 2 - \rho \right) \left\lVert y^{k+1} - y^{k} \right\rVert ^{2} \leq \left( 2 - \rho \right) \left\lVert A^{*} \left( y^{k+1} - y^{k} \right) \right\rVert ^{2} \nonumber \\
			\leq & \ \frac{\rho^2}{2-\rho} \left\lVert u_{2}^{k+1} - u_{2}^{k} \right\rVert ^{2} + \left( \rho - 1 \right) \left\lVert A^{*} \left( y^{k} - y^{k-1} \right) \right\rVert ^{2} - \left( \rho - 1 \right) \left\lVert A^{*} \left( y^{k+1} - y^{k} \right) \right\rVert ^{2}.
			\end{align}
After plugging \eqref{dec:al.2:u-bound} into \eqref{dec:al.2:mul.p2.pre-bound} it follows
			\begin{align}
			\label{dec:al.2:p2}
			\dfrac{1}{\rho r} \left\lVert y^{k+1} - y^{k} \right\rVert ^{2} & \leq \frac{2 \rho \mu_{1} ^{2}}{\lambda_{\min}(AA^*) \left( 2 - \rho \right) ^{2} r} \left\lVert x^{k+1} - x^{k} \right\rVert ^{2} + \frac{2 \rho \left( L + \mu_{1} \right) ^{2}}{\lambda_{\min}(AA^*) \left( 2 - \rho \right) ^{2} r} \left\lVert x^{k} - x^{k-1} \right\rVert ^{2} \nonumber \\
			& \quad + \dfrac{\left( \rho - 1 \right)}{\lambda_{\min}(AA^*) \left( 2 - \rho \right) \rho r} \left\lVert A^{*} \left( y^{k} - y^{k-1} \right) \right\rVert ^{2} \nonumber \\
& \quad - \dfrac{\left( \rho - 1 \right)}{\lambda_{\min}(AA^*) \left( 2 - \rho \right) \rho r} \left\lVert A^{*} \left( y^{k+1} - y^{k} \right) \right\rVert ^{2} .
			\end{align}
which, combined with  \eqref{dec:pre}, provides \eqref{dec:inq}.	
		\end{itemize}
	\end{enumerate}
This concludes the proof.
\end{proof}

The following three estimates will be useful in the sequel.
\begin{lem}
	\label{lem:estimate}
Suppose that Assumption \ref{ass} holds true and let $\left\lbrace \left( x^{k} , z^{k} , y^{k} \right) \right\rbrace _{k \geq 0}$ be a sequence generated by Algorithm \ref{algo:al.1} or Algorithm \ref{algo:al.2}. Then the following statements are true:
	\begin{enumerate}[leftmargin=\parindent]
		\item[(i)] for every $k \geq 1$
		\begin{align}
		\label{est:z-bounded-by-x}
		\left\lVert z^{k+1} - z^{k} \right\rVert & \leq \left\lVert A \right\rVert \cdot \left\lVert x^{k+1} - x^{k} \right\rVert + \left\lVert A x^{k+1} - z^{k+1} \right\rVert + \left\lVert A x^{k} - z^{k} \right\rVert \nonumber \\
		& = \left\lVert A \right\rVert \cdot \left\lVert x^{k+1} - x^{k} \right\rVert + \dfrac{1}{\rho r} \left\lVert y^{k+1} - y^{k} \right\rVert + \dfrac{1}{\rho r} \left\lVert y^{k} - y^{k-1} \right\rVert;
		\end{align}
		
		\item[(ii)] for every $k \geq 0$
		\begin{equation}
		\label{est:mult}
		\dfrac{1}{2r} \left\lVert y^{k+1} \right\rVert ^{2} \leq \dfrac{\Tdecy}{2} \left\lVert A^{*} \left( y^{k+1} - y^{k} \right) \right\rVert ^{2} + \dfrac{\Tdecx}{r} \left\lVert \nabla h \left( x^{k+1} \right) \right\rVert ^{2} + \dfrac{\Cdecxx}{4} \left\lVert x^{k+1} - x^{k} \right\rVert ^{2};
		\end{equation}
		
                  \item[(iii)] for every $k \geq 1$
		\begin{align}
		\begin{split}
		\label{est:y-bounded-by-x}
		\!\left\lVert y^{k+1} - y^{k} \right\rVert \leq \Cestx \left\lVert x^{k+1} - x^{k} \right\rVert + \Cestxx \left\lVert x^{k} - x^{k-1} \right\rVert + \Cesty \left( \left\lVert A^{*} \left( y^{k} - y^{k-1} \right) \right\rVert - \left\lVert A^{*} \left( y^{k+1} - y^{k} \right) \right\rVert \right),	
		\end{split}
		\end{align}
		where
		\begin{align*}
		\label{est:const}
		& \Cestx := \begin{cases}
		\dfrac{\rho \left( L + \mu_{1} \right)}{\sqrt{\lambda_{\min}(AA^*)} \left( 1 - \left\lvert 1 - \rho \right\rvert \right)}, & \mbox{for Algorithm} \ \ref{algo:al.1},\\\\
		\dfrac{\rho \mu_{1}}{\sqrt{\lambda_{\min}(AA^*)} \left( 1 - \left\lvert 1 - \rho \right\rvert \right)}, & \mbox{for Algorithm} \ \ref{algo:al.2},\\ 
		\end{cases}\nonumber\\ 
		& \Cestxx := \begin{cases}
		 \dfrac{\rho \mu_{1}}{\sqrt{\lambda_{\min}(AA^*)} \left( 1 - \left\lvert 1 - \rho \right\rvert \right)}, & \mbox{for Algorithm} \ \ref{algo:al.1},\\ \\
		\dfrac{\rho \left( L + \mu_{1} \right)}{\sqrt{\lambda_{\min}(AA^*)} \left( 1 - \left\lvert 1 - \rho \right\rvert \right)}, & \mbox{for Algorithm} \ \ref{algo:al.2},\\
		\end{cases} \nonumber \\ 
		& \Cesty := \dfrac{\left\lvert 1 - \rho \right\rvert}{\sqrt{\lambda_{\min}(AA^*)} \left( 1 - \left\lvert 1 - \rho \right\rvert \right)} .
		\end{align*}
	\end{enumerate}	
\end{lem}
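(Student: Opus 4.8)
The plan is to establish the three estimates separately, in each case reducing everything to the dual update \eqref{al.1:algo:y} (resp. \eqref{al.2:algo:y}) and to the reformulation of the optimality condition for $x^{k+1}$ recorded in the proof of Lemma \ref{lem:decrease}. For \emph{statement (i)} I would insert $\pm A x^{k+1}$ and $\pm A x^{k}$ and write
\begin{equation*}
z^{k+1} - z^{k} = \left( z^{k+1} - A x^{k+1} \right) + A \left( x^{k+1} - x^{k} \right) + \left( A x^{k} - z^{k} \right),
\end{equation*}
so that the triangle inequality gives $\left\lVert z^{k+1} - z^{k} \right\rVert \leq \left\lVert A \right\rVert \left\lVert x^{k+1} - x^{k} \right\rVert + \left\lVert A x^{k+1} - z^{k+1} \right\rVert + \left\lVert A x^{k} - z^{k} \right\rVert$; the equality in \eqref{est:z-bounded-by-x} then follows from \eqref{al.1:algo:y} (resp. \eqref{al.2:algo:y}), which yields $A x^{\ell + 1} - z^{\ell + 1} = \tfrac{1}{\rho r}\left( y^{\ell+1} - y^{\ell} \right)$ for every $\ell \geq 0$, applied for $\ell = k$ and $\ell = k-1$.

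For \emph{statement (ii)}, from \eqref{defi:new-y1} (resp. \eqref{defi:new-y2}), namely $A^{*} y^{\ell+1} = \rho u_{i}^{\ell+1} + \left( 1 - \rho \right) A^{*} y^{\ell}$ with $i$ the index of the algorithm, subtracting $A^{*} y^{k}$ gives $A^{*}\left( y^{k+1} - y^{k} \right) = \rho \left( u_{i}^{k+1} - A^{*} y^{k} \right)$, hence
\begin{equation*}
A^{*} y^{k+1} = u_{i}^{k+1} + \dfrac{\rho - 1}{\rho} A^{*}\left( y^{k+1} - y^{k} \right).
\end{equation*}
Using \eqref{defi:u1} together with $\left\lVert \M_{1}^{k} \right\rVert \leq \mu_{1}$ one gets $\left\lVert u_{1}^{k+1} \right\rVert \leq \left\lVert \nabla h \left( x^{k+1} \right) \right\rVert + \mu_{1} \left\lVert x^{k+1} - x^{k} \right\rVert$ for Algorithm \ref{algo:al.1}, while \eqref{defi:u2} and the $L$-Lipschitz continuity of $\nabla h$ give $\left\lVert u_{2}^{k+1} \right\rVert \leq \left\lVert \nabla h \left( x^{k} \right) \right\rVert + \mu_{1} \left\lVert x^{k+1} - x^{k} \right\rVert \leq \left\lVert \nabla h \left( x^{k+1} \right) \right\rVert + \left( L + \mu_{1} \right) \left\lVert x^{k+1} - x^{k} \right\rVert$ for Algorithm \ref{algo:al.2}; in both cases $\left\lVert A^{*} y^{k+1} \right\rVert \leq \left\lVert \nabla h \left( x^{k+1} \right) \right\rVert + c \left\lVert x^{k+1} - x^{k} \right\rVert + \tfrac{\left\lvert 1 - \rho \right\rvert}{\rho} \left\lVert A^{*}\left( y^{k+1} - y^{k} \right) \right\rVert$ with $c^{2} = r \Cdecxx / \left( 4 \Tdecx \right)$. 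Then I would use \eqref{PADMM:eigenvalue} in the form $\left\lVert y^{k+1} \right\rVert^{2} \leq \lambda_{\min}(AA^{*})^{-1} \left\lVert A^{*} y^{k+1} \right\rVert^{2}$ together with the Cauchy--Schwarz inequality $\left( a_{1} + a_{2} + a_{3} \right)^{2} \leq \left( \sum_{j=1}^{3} \lambda_{j} \right) \left( \sum_{j=1}^{3} a_{j}^{2} / \lambda_{j} \right)$ for the weights $\lambda_{1} = \lambda_{2} = 1$ and $\lambda_{3} = 2 \lambda_{\min}(AA^{*}) \Tdecx - 2 \geq 0$ (the third term being vacuous when $\rho = 1$, where it vanishes). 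A short computation shows this produces exactly the coefficient $2 \Tdecx$ in front of $\left\lVert \nabla h \left( x^{k+1} \right) \right\rVert^{2}$ and $2 \Tdecx c^{2} = r \Cdecxx / 2$ in front of $\left\lVert x^{k+1} - x^{k} \right\rVert^{2}$, while the coefficient of $\left\lVert A^{*}\left( y^{k+1} - y^{k} \right) \right\rVert^{2}$ equals $\tfrac{1-\rho}{\lambda_{\min}(AA^{*}) \rho^{2}} = r \Tdecy$ when $0 < \rho \leq 1$ and equals $\tfrac{\rho-1}{\lambda_{\min}(AA^{*}) \rho \left( 4 - \rho \right)} \leq \tfrac{\rho-1}{\lambda_{\min}(AA^{*}) \rho \left( 2 - \rho \right)} = r \Tdecy$ when $1 < \rho < 2$ (using $2 - \rho \leq 4 - \rho$). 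Dividing by $2r$ yields \eqref{est:mult}.

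For \emph{statement (iii)}, subtracting two consecutive instances of \eqref{defi:new-y1} (resp. \eqref{defi:new-y2}) gives $A^{*}\left( y^{k+1} - y^{k} \right) - \left( 1 - \rho \right) A^{*}\left( y^{k} - y^{k-1} \right) = \rho \left( u_{i}^{k+1} - u_{i}^{k} \right)$, and the reverse triangle inequality then yields $\left\lVert A^{*}\left( y^{k+1} - y^{k} \right) \right\rVert - \left\lvert 1 - \rho \right\rvert \left\lVert A^{*}\left( y^{k} - y^{k-1} \right) \right\rVert \leq \rho \left\lVert u_{i}^{k+1} - u_{i}^{k} \right\rVert$. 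Adding and subtracting $\left\lvert 1 - \rho \right\rvert \left\lVert A^{*}\left( y^{k+1} - y^{k} \right) \right\rVert$ on the left and dividing by $1 - \left\lvert 1 - \rho \right\rvert > 0$ (positive since $0 < \rho < 2$) isolates the telescoping expression $\left\lVert A^{*}\left( y^{k} - y^{k-1} \right) \right\rVert - \left\lVert A^{*}\left( y^{k+1} - y^{k} \right) \right\rVert$ with coefficient $\Cesty$; combining this with $\left\lVert y^{k+1} - y^{k} \right\rVert \leq \lambda_{\min}(AA^{*})^{-1/2} \left\lVert A^{*}\left( y^{k+1} - y^{k} \right) \right\rVert$ and the bound \eqref{dec:al.1:u-pre-bound} (resp. \eqref{dec:al.2:u-pre-bound}) on $\left\lVert u_{i}^{k+1} - u_{i}^{k} \right\rVert$ produces precisely the constants $\Cestx$, $\Cestxx$ of \eqref{est:const} (with the roles of $L + \mu_{1}$ and $\mu_{1}$ interchanged between the two algorithms). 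The main obstacle is part (ii): one must choose the Cauchy--Schwarz weights so that all three prescribed constants $\Tdecy$, $\Tdecx$, $\Cdecxx$ are matched at once, and then verify — exploiting exactly the case split $0 < \rho \leq 1$ versus $1 < \rho < 2$ hard-wired into the definitions of $\Tdecy$ and $\Tdecx$ — that the coefficient of $\left\lVert A^{*}\left( y^{k+1} - y^{k} \right) \right\rVert^{2}$ never exceeds $r \Tdecy$; for $\rho \leq 1$ this holds with equality, and for $\rho > 1$ it is precisely the slack $4 - \rho \geq 2 - \rho$ that makes the estimate go through.
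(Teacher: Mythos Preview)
Your argument is correct. Parts (i) and (iii) follow essentially the same route as the paper: for (i) the paper simply says the statement is straightforward, and for (iii) the paper also starts from $\left\lVert A^{*}(y^{k+1}-y^{k})\right\rVert \leq \rho\left\lVert u^{k+1}-u^{k}\right\rVert + \left\lvert 1-\rho\right\rvert \left\lVert A^{*}(y^{k}-y^{k-1})\right\rVert$, rearranges to isolate the telescoping term, uses $\sqrt{\lambda_{\min}(AA^{*})}\left\lVert y^{k+1}-y^{k}\right\rVert \leq \left\lVert A^{*}(y^{k+1}-y^{k})\right\rVert$, and inserts \eqref{dec:al.1:u-pre-bound}/\eqref{dec:al.2:u-pre-bound}.

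For part (ii) the paper organizes the estimate differently. It keeps the two-term decomposition $\rho A^{*}y^{k+1}=\rho u^{k+1}+(1-\rho)A^{*}(y^{k}-y^{k+1})$ and applies the convexity of $\left\lVert\cdot\right\rVert^{2}$ (with weights $\rho,1-\rho$ when $0<\rho\leq 1$, and $2-\rho,\rho-1$ when $1<\rho<2$) to obtain \eqref{est:pre-p1}--\eqref{est:pre-p2}; only afterwards does it bound $\left\lVert u^{k+1}\right\rVert^{2}\leq 2\left\lVert\nabla h(x^{k+1})\right\rVert^{2}+2c^{2}\left\lVert x^{k+1}-x^{k}\right\rVert^{2}$ and divide by $2\lambda_{\min}(AA^{*})\rho^{2}r$. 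Your single three-term weighted Cauchy--Schwarz achieves the same bound in one shot and postpones the case split on $\rho$ to the very last verification. The only point you leave implicit is that your weight $\lambda_{3}=2\lambda_{\min}(AA^{*})\Tdecx-2$ is indeed nonnegative on the whole range $0<\rho<2$; this is immediate for $\rho\leq 1$ and reduces to $(\rho-1)(\rho-4)\leq 0$ for $1<\rho<2$, so the argument goes through.
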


\begin{proof}
	The statement in \eqref{est:z-bounded-by-x} is straightforward. 
	
	From \eqref{defi:new-y1} and \eqref{defi:new-y2} we have for every $k \geq 0$
\begin{equation*}
		A^{*} y^{k+1} = \rho u^{k+1} + \left( 1 - \rho \right) A^{*} y^{k} 
		\end{equation*}
	or, equivalently,
	\begin{equation*}
	\rho A^{*} y^{k+1} = \rho u^{k+1} + \left( 1 - \rho \right) A^{*} \left( y^{k} - y^{k+1} \right),
	\end{equation*}
	where $u^{k+1}$ is defined as being equal to $u_{1}^{k+1}$ in \eqref{defi:u1}, for Algorithm \ref{algo:al.1}, and, respectively, to $u_{2}^{k+1}$ in \eqref{defi:u2}, for Algorithm \ref{algo:al.2}.
	
	For $0 < \rho \leq 1$ we have
	\begin{equation}
	\label{est:pre-p1}
	\lambda_{\min}(AA^*) \rho^{2} \left\lVert y^{k+1} \right\rVert ^{2} \leq \rho^{2} \left\lVert A^{*} y^{k+1} \right\rVert ^{2} \leq \rho \left\lVert u^{k+1} \right\rVert ^{2} + \left( 1 - \rho \right) \left\lVert A^{*} \left( y^{k+1} - y^{k} \right) \right\rVert ^{2},
	\end{equation}
	while, for $1 < \rho < 2$, we have
	\begin{equation}
	\label{est:pre-p2}
	\lambda_{\min}(AA^*) \rho^{2} \left\lVert y^{k+1} \right\rVert ^{2} \leq \rho^{2} \left\lVert A^{*} y^{k+1} \right\rVert ^{2} \leq \dfrac{\rho ^{2}}{2 - \rho} \left\lVert u^{k+1} \right\rVert ^{2} + \left( \rho - 1 \right) \left\lVert A^{*} \left( y^{k+1} - y^{k} \right) \right\rVert ^{2}.
	\end{equation}
  Notice further that for $1 < \rho < 2$ we have $\dfrac{1}{\rho} < 1$ and $1 < \dfrac{\rho}{2 - \rho}$.
	
In case $u^{k+1}$ is defined as in \eqref{defi:u1} it holds
	\begin{equation}
	\label{est:u1-bound}
	\left\lVert u^{k+1} \right\rVert ^{2} = \left\lVert u_{1}^{k+1} \right\rVert ^{2} \leq 2 \left\lVert \nabla h \left( x^{k+1} \right) \right\rVert ^{2} + 2 \mu_{1}^{2} \left\lVert x^{k+1} - x^{k} \right\rVert ^{2} \ \forall k \geq 0,
	\end{equation}
while, in case $u_{2}^{k+1}$ is defined as in \eqref{defi:u2}, it holds
	\begin{equation}
	\label{est:u2-bound}
	\left\lVert u^{k+1} \right\rVert ^{2} = \left\lVert u_{2}^{k+1} \right\rVert ^{2} \leq 2 \left\lVert \nabla h \left( x^{k+1} \right) \right\rVert ^{2} + 2 \left( L + \mu_{1} \right) ^{2} \left\lVert x^{k+1} - x^{k} \right\rVert ^{2} \  \forall k \geq 0.
	\end{equation}
We divide \eqref{est:pre-p1} and \eqref{est:pre-p2} by $2 \lambda_{\min}(AA^*) \rho^{2} r > 0$ and  plug \eqref{est:u1-bound} and, respectively, \eqref{est:u2-bound} into the resulting inequalities. This gives us \eqref{est:mult}.	
		
Finally, in order to prove \eqref{est:y-bounded-by-x}, we notice that for every $k \geq 1$ it holds
	\begin{equation*}
	\left\lVert A^{*} \left( y^{k+1} - y^{k} \right) \right\rVert \leq \rho \left\lVert u^{k+1} - u^{k} \right\rVert + \left\lvert 1 - \rho \right\rvert \left\lVert A^{*} \left( y^{k} - y^{k-1} \right) \right\rVert,
	\end{equation*}
so,
	\begin{align}
	\label{est:pre-y}
	& \ \sqrt{\lambda_{\min}(AA^*)} \left( 1 - \left\lvert 1 - \rho \right\rvert \right) \left\lVert y^{k+1} - y^{k} \right\rVert \leq \left( 1 - \left\lvert 1 - \rho \right\rvert \right) \left\lVert A^{*} \left( y^{k+1} - y^{k} \right) \right\rVert \nonumber\\
	\leq & \ \rho \left\lVert u^{k+1} - u^{k} \right\rVert + \left\lvert 1 - \rho \right\rvert \left\lVert A^{*} \left( y^{k} - y^{k-1} \right) \right\rVert - \left\lvert 1 - \rho \right\rvert \left\lVert A^{*} \left( y^{k+1} - y^{k} \right) \right\rVert .
	\end{align}
We plug into \eqref{est:pre-y}  the estimates for $\left\lVert u^{k+1} - u^{k} \right\rVert$ derived in \eqref{dec:al.1:u-pre-bound} and, respectively, \eqref{dec:al.2:u-pre-bound} and  divide the resulting inequality  by $\sqrt{\lambda_{\min}(AA^*)} \left( 1 - \left\lvert 1 - \rho \right\rvert \right) > 0$. 
This  furnishes the desired statement.
\end{proof}

The following regularization of the augmented Lagrangian will play an important role in the convergence analysis of the nonconvex proximal ADMM algorithms
\begin{align*}
& \Fr \colon \sR^{n} \times \sR^{m} \times \sR^{m} \times \sR^{n} \times \sR^{m} \to \sR \cup \left\lbrace + \infty \right\rbrace, \nonumber\\ 
& \Fr( x , z , y , x' , y') = \Lr \left( x , z , y \right) + \Tdecy \left\lVert A^{*} \left( y - y' \right) \right\rVert ^{2} + \dfrac{\Cdecxx}{2} \left\lVert x - x' \right\rVert^2,
\end{align*}
where $\Tdecy $ and $\Cdecxx$ are defined in Assumption \ref{ass}. For every $k \geq 1$ we denote
\begin{align}
\label{defi:Hk}
\Fk := \Fr\left( x^{k} , z^{k} , y^{k} , x^{k-1} , y^{k-1} \right) = \Lr \left( x^{k} , z^{k} , y^{k} \right) + \Tdecy  \left\lVert A^{*} \left( y^{k} - y^{k-1} \right) \right\rVert ^{2} + \dfrac{\Cdecxx}{2} \left\lVert x^{k} - x^{k-1} \right\rVert^2.
\end{align}

Since the convergence analysis will rely on the fact that the set of cluster points of the sequence $\left\lbrace \left( x^{k} , z^{k} , y^{k} \right) \right\rbrace _{k \geq 0}$ is nonempty, we will present first two situations which guarantee that this sequence is bounded. They make use of standard
coercivity assumptions for the functions $g$ and $h$, respectively. Recall that  a function $\Psi : \sR^N \to \sR \cup \left\lbrace + \infty \right\rbrace$ is called \emph{coercive}, if $\lim\limits_{\left\lVert x \right\rVert \to +\infty} \Psi \left( x \right) = + \infty$.

\begin{thm}
	\label{thm:bound}
Suppose that Assumption \ref{ass} holds true  and let $\left\lbrace \left( x^{k} , z^{k} , y^{k} \right) \right\rbrace _{k \geq 0}$ be a sequence generated by Algorithm \ref{algo:al.1} or Algorithm \ref{algo:al.2}. Suppose that one of the following conditions holds:
	\begin{enumerate}[label=(B-\Roman*)]
		\item
		\label{assume:B-I}
		$A$ is invertible and $g$ is coercive;
		\item
		\label{assume:B-II}
		$h$ is coercive.
	\end{enumerate}
Then the sequence $\left\lbrace \left( x^{k} , z^{k} , y^{k} \right) \right\rbrace _{k \geq 0}$ is bounded.
\end{thm}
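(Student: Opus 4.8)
The backbone of the argument is the \emph{regularised augmented Lagrangian} $\Fk$ from \eqref{defi:Hk}: the plan is to show that $\{\Fk\}_{k\ge1}$ is non-increasing and bounded from below, and then to read off bounds on $x^{k}$, $z^{k}$, $y^{k}$ from the boundedness of $\Fk$ together with the estimate \eqref{est:mult}. For the monotonicity, I would start from the Fej\'er-type inequality \eqref{dec:inq} of Lemma \ref{lem:decrease}. By \eqref{assume:vm:positive} we have $\M_{3}^{k}\succcurlyeq\frac32\Cdecxx\Id$, so $\frac12\lVert x^{k+1}-x^{k}\rVert_{\M_{3}^{k}}^{2}\ge\frac{\Cdecxx}{2}\lVert x^{k+1}-x^{k}\rVert^{2}+\frac{\Cdecxx}{4}\lVert x^{k+1}-x^{k}\rVert^{2}$; recognising the left-hand side of \eqref{dec:inq} as $\F_{k+1}$ plus $\frac{\Cdecxx}{4}\lVert x^{k+1}-x^{k}\rVert^{2}+\frac12\lVert z^{k+1}-z^{k}\rVert_{\M_{2}^{k}}^{2}$ (up to a nonnegative remainder) and its right-hand side as $\Fk$, one obtains
\[
\F_{k+1}+\tfrac{\Cdecxx}{4}\left\lVert x^{k+1}-x^{k}\right\rVert^{2}+\tfrac12\left\lVert z^{k+1}-z^{k}\right\rVert_{\M_{2}^{k}}^{2}\le\Fk\qquad\forall k\ge1 ,
\]
hence $\Fk\le\F_{1}<+\infty$ for every $k\ge1$ (with $\F_{1}$ finite since $z^{1}\in\dom g$).

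Next I would derive a lower estimate for $\Fk$. Completing the square gives $\Lr(x,z,y)=g(z)+h(x)+\frac r2\lVert Ax-z+\frac1r y\rVert^{2}-\frac1{2r}\lVert y\rVert^{2}$, and bounding $-\frac1{2r}\lVert y^{k}\rVert^{2}$ from below by \eqref{est:mult} (with the index shifted by one) yields, for every $k\ge1$,
\[
\Fk\ge g\!\left(z^{k}\right)+\Big(h\!\left(x^{k}\right)-\tfrac{\Tdecx}{r}\big\lVert\nabla h(x^{k})\big\rVert^{2}\Big)+\tfrac r2\big\lVert Ax^{k}-z^{k}+\tfrac1r y^{k}\big\rVert^{2}+\tfrac{\Tdecy}{2}\big\lVert A^{*}(y^{k}-y^{k-1})\big\rVert^{2}+\tfrac{\Cdecxx}{4}\big\lVert x^{k}-x^{k-1}\big\rVert^{2}.
\]
Since $h$ is bounded from below, the Descent Lemma \eqref{eq:descent} (equivalently, Proposition \ref{prop:lips-semiconvex}(ii) with $\sigma:=L$) gives $\lVert\nabla h(x)\rVert^{2}\le 2L(h(x)-\inf h)$ for all $x$, whence $h(x)-\frac{\Tdecx}{r}\lVert\nabla h(x)\rVert^{2}\ge\big(1-\frac{2\Tdecx L}{r}\big)(h(x)-\inf h)+\inf h\ge\inf h$ because $r\ge 2\Tdecx L$ by \eqref{assume:r:positive}; together with $g$ bounded from below (which in case \ref{assume:B-I} follows from the coercivity and lower semicontinuity of $g$) this makes $\Fk$ bounded from below. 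Lemma \ref{lem:conv-pre} then gives that $\{\Fk\}$ converges and $\sum_{k\ge1}\big(\tfrac{\Cdecxx}{4}\lVert x^{k+1}-x^{k}\rVert^{2}+\tfrac12\lVert z^{k+1}-z^{k}\rVert_{\M_{2}^{k}}^{2}\big)<+\infty$.

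Each of the five summands in the lower estimate is bounded below by a constant, while their sum is $\le\Fk\le\F_{1}$; hence each is bounded above uniformly in $k$. In particular $\{g(z^{k})\}$ and $\{h(x^{k})-\frac{\Tdecx}{r}\lVert\nabla h(x^{k})\rVert^{2}\}$ are bounded above, and $\Cdecxx\lVert x^{k}-x^{k-1}\rVert^{2}$, $\Tdecy\lVert A^{*}(y^{k}-y^{k-1})\rVert^{2}$ are bounded. This is the point where the quantitative strengthening $r\ge(2+\gamma)\Tdecx L$ with $\gamma>1$ matters: combining the upper bound on $h(x^{k})-\frac{\Tdecx}{r}\lVert\nabla h(x^{k})\rVert^{2}$ with $\lVert\nabla h(x^{k})\rVert^{2}\le 2L(h(x^{k})-\inf h)$ gives $\frac{\gamma}{2+\gamma}\big(h(x^{k})-\inf h\big)+\inf h\le h(x^{k})-\frac{\Tdecx}{r}\lVert\nabla h(x^{k})\rVert^{2}\le\mathrm{const}$, because $1-\frac{2\Tdecx L}{r}\ge\frac{\gamma}{2+\gamma}>0$; hence $\{h(x^{k})\}$, and therefore $\{\nabla h(x^{k})\}$, is bounded. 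Substituting these bounds into \eqref{est:mult} — the $\lVert x^{k}-x^{k-1}\rVert^{2}$ term being controlled by the bound above when $\Cdecxx>0$ and absent otherwise (which happens only for Algorithm \ref{algo:al.1} with $\mu_{1}=0$), and the $\lVert A^{*}(y^{k}-y^{k-1})\rVert^{2}$ term being absent exactly when $\Tdecy=0$, i.e.\ $\rho=1$ — shows that $\{y^{k}\}$ is bounded.

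It remains to separate $x^{k}$ and $z^{k}$, which is where the two alternative hypotheses enter. Under \ref{assume:B-II}, coercivity of $h$ and boundedness of $\{h(x^{k})\}$ force $\{x^{k}\}$ to be bounded, and then $\{z^{k}\}$ is bounded because $z^{k}=Ax^{k}-\frac1{\rho r}(y^{k}-y^{k-1})$ by \eqref{al.1:algo:y} (resp.\ \eqref{al.2:algo:y}). Under \ref{assume:B-I}, coercivity of $g$ and boundedness of $\{g(z^{k})\}$ force $\{z^{k}\}$ to be bounded, so $Ax^{k}=z^{k}+\frac1{\rho r}(y^{k}-y^{k-1})$ is bounded and, $A$ being invertible, $\{x^{k}\}$ is bounded; together with the given starting vector this yields boundedness of $\{(x^{k},z^{k},y^{k})\}_{k\ge0}$. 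The genuinely delicate step is this last separation in case \ref{assume:B-I}: the Lyapunov bound only controls $h(x^{k})-\frac{\Tdecx}{r}\lVert\nabla h(x^{k})\rVert^{2}$ rather than $h(x^{k})$ or $y^{k}$ alone, and there is no coercivity of $h$ to fall back on; it is precisely the margin $r\ge(2+\gamma)\Tdecx L$ that makes $1-\frac{2\Tdecx L}{r}$ strictly positive and thereby forces boundedness of $\{h(x^{k})\}$, hence of $\{\nabla h(x^{k})\}$, hence — through \eqref{est:mult} — of $\{y^{k}\}$, after which invertibility of $A$ recovers $\{x^{k}\}$.
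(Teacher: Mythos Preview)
Your proof is correct and follows essentially the same opening as the paper --- monotonicity of $\{\Fk\}$ from Lemma \ref{lem:decrease}, then a lower estimate for $\Fk$ via the completion of the square and \eqref{est:mult} --- but the way you extract boundedness of the three sequences is genuinely different from the paper's argument, and in some respects cleaner.

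The paper treats \ref{assume:B-I} and \ref{assume:B-II} separately from the outset. In \ref{assume:B-I} it invokes Proposition \ref{prop:lips-semiconvex}(ii) only to get a uniform lower bound on $h(x)-\tfrac{\Tdecx}{r}\lVert\nabla h(x)\rVert^{2}$, deduces boundedness of $\{z^{k}\}$, $\{Ax^{k}-z^{k}+r^{-1}y^{k}\}$ and $\{x^{k+1}-x^{k}\}$, and then argues through a chain of implications ($\{y^{k+1}-y^{k}\}$ bounded $\Rightarrow\{Ax^{k}-z^{k}\}$ bounded $\Rightarrow\{y^{k}\}$ bounded $\Rightarrow\{Ax^{k}\}$ bounded $\Rightarrow\{x^{k}\}$ bounded). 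In \ref{assume:B-II} it uses a different $\sigma$ in Proposition \ref{prop:lips-semiconvex}(ii), chosen so that a strictly positive multiple $\bigl(1-\tfrac{1}{\gamma}\bigr)h(x^{k+1})$ appears explicitly in the lower bound; this is where the assumption $\gamma>1$ is actually used in the paper.

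You instead use the single inequality $\lVert\nabla h(x)\rVert^{2}\le 2L\bigl(h(x)-\inf h\bigr)$ (the $\sigma=L$ case of Proposition \ref{prop:lips-semiconvex}(ii)) twice: once for the lower bound, and once in reverse to conclude $\{h(x^{k})\}$ is bounded from the boundedness of $\{h(x^{k})-\tfrac{\Tdecx}{r}\lVert\nabla h(x^{k})\rVert^{2}\}$ via the strict positivity of $1-\tfrac{2\Tdecx L}{r}$. This gives $\{\nabla h(x^{k})\}$ bounded and hence $\{y^{k}\}$ bounded directly from \eqref{est:mult}, \emph{before} any case distinction. The split between \ref{assume:B-I} and \ref{assume:B-II} then only concerns whether one recovers $\{x^{k}\}$ from $\{z^{k}\}$ (via invertibility of $A$) or vice versa. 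This is more symmetric than the paper's treatment, avoids the somewhat delicate chain in the paper's handling of \ref{assume:B-I}, and in fact your argument only needs $r>2\Tdecx L$ (i.e.\ $\gamma>0$), not $\gamma>1$; your remark that the margin ``$\gamma>1$'' is what makes $1-\tfrac{2\Tdecx L}{r}$ strictly positive slightly overstates what is required at that step.
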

\begin{proof}
From Lemma \ref{lem:decrease} we have that for every $k \geq 1$
	\begin{equation}
	\label{bound:dec:Hk}
	\F_{k+1} + \dfrac{1}{2} \left\lVert x^{k+1} - x^{k} \right\rVert _{\M_{3}^{k} - \Cdecxx\Id}^{2} + \dfrac{1}{2} \left\lVert z^{k+1} - z^{k} \right\rVert _{\M_{2}^{k}}^{2} \leq \F_{k}
	\end{equation}
	which shows, according to \eqref{assume:vm:positive}, that $\left\lbrace \F_{k} \right\rbrace _{k \geq 1}$ is monotonically decreasing. Consequently, for every $k \geq 1$ we have
	\begin{align*}
	\begin{split}
	\F_{1} \geq & \ \F_{k+1} + \dfrac{1}{2} \left\lVert x^{k+1} - x^{k} \right\rVert _{\M_{3}^{k} - \Cdecxx\Id}^{2} + \dfrac{1}{2} \left\lVert z^{k+1} - z^{k} \right\rVert _{\M_{2}^{k}}^{2} \\
	= & \ h \left( x^{k+1} \right) + g \left( z^{k+1} \right) - \dfrac{1}{2r} \left\lVert y^{k+1} \right\rVert ^{2} + \dfrac{r}{2} \left\lVert A x^{k+1} - z^{k+1} + \dfrac{1}{r} y^{k+1} \right\rVert ^{2} \\
	& \ + \Tdecy \left\lVert A^{*} \left( y^{k+1} - y^{k} \right) \right\rVert ^{2} + \dfrac{1}{2} \left\lVert x^{k+1} - x^{k} \right\rVert _{\M_{3}^{k} - C_0\Id}^{2} + \dfrac{1}{2} \left\lVert z^{k+1} - z^{k} \right\rVert _{\M_{2}^{k}}^{2} + \dfrac{\Cdecxx}{2}\|x^{k+1} - x^k\|,
	\end{split}
	\end{align*}
	which, thanks to \eqref{est:mult}, leads to
	\begin{align}
	\label{bound:upper}
	\F_{1} \geq & \ h \left( x^{k+1} \right) + g \left( z^{k+1} \right) - \dfrac{\Tdecx}{r} \left\lVert \nabla h \left( x^{k+1} \right) \right\rVert ^{2} + \dfrac{r}{2} \left\lVert A x^{k+1} - z^{k+1} + \dfrac{1}{r} y^{k+1} \right\rVert ^{2} \nonumber \\
	& \ + \frac{\Tdecy}{2} \left\lVert A^{*} \left( y^{k+1} - y^{k} \right) \right\rVert ^{2} + \dfrac{1}{2} \left\lVert x^{k+1} - x^{k} \right\rVert _{\M_{3}^{k} - \Cdecxx \Id}^{2} + \dfrac{1}{2} \left\lVert z^{k+1} - z^{k} \right\rVert _{\M_{2}^{k}}^{2} + \dfrac{\Cdecxx}{4}\|x^{k+1} - x^k\|^2.
	\end{align}
	Next we will prove the boundedness of $\left\lbrace \left( x^{k} , z^{k} , y^{k} \right) \right\rbrace _{k \geq 0}$ under each of the two scenarios.
	
	\item[(B-I)] Since $r \geq 4 \Tdecx L$, there exists $\sigma > 0$ such that
	\begin{equation*}
	\dfrac{1}{\sigma} - \dfrac{L}{2 \sigma^{2}} = \dfrac{\Tdecx}{r} .
	\end{equation*}
	From Proposition \ref{prop:lips-semiconvex} and the relation \eqref{bound:upper} we see that for every $k \geq 1$
	\begin{align*}
	& \ g \left( z^{k+1} \right) + \dfrac{r}{2} \left\lVert A x^{k+1} - z^{k+1} + \dfrac{1}{r} y^{k+1} \right\rVert ^{2} + \dfrac{\Cdecxx}{4}\|x^{k+1} - x^k\|^2 \\
	\leq & \ \F_{1} - \inf\limits_{x \in \sR^{n}} \left\lbrace h \left( x \right) - \dfrac{\Tdecx}{r} \left\lVert \nabla h \left( x \right) \right\rVert ^{2} \right\rbrace < + \infty .
	\end{align*}
 Since $g$ is coercive, it follows that the sequence $\left\lbrace z^{k} \right\rbrace _{k \geq 0}$ is bounded. On the other hand, since $g$ is bounded from below, it follows that the sequences $\left\lbrace A x^{k} - z^{k} + r^{-1} y^{k} \right\rbrace _{k \geq 0}$ and $\left\lbrace x^{k+1}-x^{k} \right\rbrace _{k \geq 0}$ are bounded as well. In addition, since for every $k \geq 0$ it holds
 	\begin{equation*}
 	\left\lVert A \left( x^{k+1}-x^{k} \right) - \left( z^{k+1}-z^{k} \right) \right\rVert \leq \left\lVert A \right\rVert \cdot \left\lVert x^{k+1}-x^{k} \right\rVert + \left\lVert z^{k+1} \right\rVert + \left\lVert z^{k} \right\rVert
 	\end{equation*}
it follows that $\left\lbrace A \left( x^{k+1}-x^{k} \right) - \left( z^{k+1}-z^{k} \right) \right\rbrace_{k \geq 0}$ is bounded, thus  $\left\lbrace r^{-1} \left( y^{k+1}-y^{k} \right) \right\rbrace _{k \geq 0}$ is bounded. According to the third update in the iterative scheme we obtain that $\left\lbrace A x^{k} - z^{k} \right\rbrace _{k \geq 0}$ is bounded and from here that $\left\lbrace y^{k} \right\rbrace _{k \geq 0}$ is also bounded. This implies the boundedness of $\left\lbrace A x^{k} \right\rbrace _{k \geq 0}$ and, finally, since $A$ is invertible, the boundedness of  $\left\lbrace x^{k} \right\rbrace _{k \geq 0}$.
	
	\item[(B-II)] Again thanks to \eqref{assume:r:positive} there exists $\sigma > 0$ such that
	\begin{equation*}
	\dfrac{1}{\sigma} - \dfrac{L}{2 \sigma^{2}} = \dfrac{3 \Tdecx}{2 r}.
	\end{equation*}
	We assume first that $\rho \neq 1$ or, equivalently, $\Tdecy  \neq 0$. From Proposition \ref{prop:lips-semiconvex} and \eqref{bound:upper} we see that for every $k \geq 1$
	\begin{align*}
	& \dfrac{1}{2} h \left( x^{k+1} \right) + \dfrac{\Tdecx}{4 r} \left\lVert \nabla h \left( x^{k+1} \right) \right\rVert ^{2} + \dfrac{r}{2} \left\lVert A x^{k+1} - z^{k+1} + \dfrac{1}{r} y^{k+1} \right\rVert ^{2} + \dfrac{\Tdecy }{2} \left\lVert A^{*} \left( y^{k+1} - y^{k} \right) \right\rVert \\
	\leq & \ \F_{1} - \inf\limits_{z \in \sR^{m}} g \left( z \right) - \dfrac{1}{2} \inf\limits_{x \in \sR^{n}} \left\lbrace h \left( x \right) - \dfrac{3 \Tdecx}{2r} \left\lVert \nabla h \left( x \right) \right\rVert ^{2} \right\rbrace < + \infty .
	\end{align*}
	Since $h$ is coercive and bounded from below, we obtain that $\left\lbrace x^{k} \right\rbrace _{k \geq 0}$, $\left\lbrace A x^{k} - z^{k} + r^{-1} y^{k} \right\rbrace _{k \geq 0}$ and $\left\lbrace  A^{*} \left( y^{k+1} - y^{k} \right) \right\rbrace _{k \geq 0}$ are bounded. For every $k \geq 0$ we have that 
	\begin{equation*}
	\lambda_{\min} \left( A^{*}A \right) \rho^{2} r^{2} \left\lVert Ax^{k+1}-z^{k+1} \right\rVert ^{2} = \lambda_{\min} \left( A^{*}A \right) \left\lVert y^{k+1}-y^{k} \right\rVert ^{2} \leq \left\lVert A^{*} \left( y^{k+1}-y^{k} \right) \right\rVert ^{2},
	\end{equation*}
        thus $\left\lbrace A x^{k} - z^{k} \right\rbrace _{k \geq 0}$ is bounded. Consequently, $\left\lbrace y^{k} \right\rbrace _{k \geq 0}$ and $\left\lbrace z^{k} \right\rbrace _{k \geq 0}$ are bounded.
        
        In case $\rho = 1$ or, equivalently, $\Tdecy  = 0$, we have that for every $k \geq 1$
	\begin{align*}
	& \dfrac{1}{2} h \left( x^{k+1} \right) + \dfrac{\Tdecx}{4 r} \left\lVert \nabla h \left( x^{k+1} \right) \right\rVert ^{2} + \dfrac{r}{2} \left\lVert A x^{k+1} - z^{k+1} + \dfrac{1}{r} y^{k+1} \right\rVert ^{2} \\
	\leq & \ \F_{1} - \inf\limits_{z \in \sR^{m}} g \left( z \right) - \dfrac{1}{2} \inf\limits_{x \in \sR^{n}} \left\lbrace h \left( x \right) - \dfrac{3 \Tdecx}{2r} \left\lVert \nabla h \left( x \right) \right\rVert ^{2} \right\rbrace < + \infty
	\end{align*}
        from which we deduce that $\left\lbrace x^{k} \right\rbrace _{k \geq 0}$ and $\left\lbrace A x^{k} - z^{k} + r^{-1} y^{k} \right\rbrace _{k \geq 0}$ are bounded. From Lemma \ref{lem:estimate} (iii), which now reads
        \begin{equation*}
        \left\lVert y^{k+1} - y^{k} \right\rVert \leq \Cestx \left\lVert x^{k+1} - x^{k} \right\rVert + \Cestxx \left\lVert x^{k} - x^{k-1} \right\rVert \ \forall k \geq 1,
        \end{equation*}
        it yields that $\left\lbrace y^{k+1} - y^k \right\rbrace _{k \geq 0}$ is bounded, thus,
        $\left\lbrace A x^{k} - z^{k}\right\rbrace _{k \geq 0}$ is bounded. Consequently, $\left\lbrace y^{k} \right\rbrace _{k \geq 0}$ and $\left\lbrace z^{k} \right\rbrace _{k \geq 0}$ are bounded.
        
Both considered scenarios lead to the  conclusion that the sequence $\left\lbrace \left( x^{k} , z^{k} , y^{k} \right) \right\rbrace _{k \geq 0}$ is bounded.
\end{proof}

We state now the first convergence result of this paper.
\begin{thm}
	\label{thm:limit}
Suppose that Assumption \ref{ass} holds true and let $\left\lbrace \left( x^{k} , z^{k} , y^{k} \right) \right\rbrace _{k \geq 0}$ be a sequence generated by Algorithm \ref{algo:al.1} or Algorithm \ref{algo:al.2}, which is assumed to be bounded.  The following statements are true:
	\begin{enumerate}
\item[(i)] 
		\label{thm:limit:i}
		for every $k \geq 1$ it holds
		\begin{equation}
		\label{limit:dec:Hk}
		\F_{k+1} + \dfrac{\Cdecxx}{4} \left\lVert x^{k+1} - x^{k} \right\rVert ^{2} + \dfrac{1}{2} \left\lVert z^{k+1} - z^{k} \right\rVert _{\M_{2}^{k}}^{2} \leq \F_{k};
		\end{equation}
		
\item[(ii)] 
		\label{thm:limit:ii}
		the sequence $\left\lbrace \F_{k} \right\rbrace _{k \geq 0}$ is bounded from below and convergent. In addition,
		\begin{equation}
		\label{limit:vanish}
		x^{k+1} - x^{k} \to 0, \ z^{k+1} - z^{k} \to 0 \textrm{ and } y^{k+1} - y^{k} \to 0 \textrm{ as } k \to + \infty;
		\end{equation}
		
\item[(iii)] 
		\label{thm:limit:iii}
		the sequences $\left\lbrace \F_{k} \right\rbrace _{k \geq 0}$, $\left\lbrace \Lr \left( x^{k} , z^{k} , y^{k} \right) \right\rbrace _{k \geq 0}$ and $\left\lbrace h \left( x^{k} \right) + g \left( z^{k} \right) \right\rbrace _{k \geq 0}$ have the same limit, which we denote by $\F_{*} \in \sR$.
	\end{enumerate}
\end{thm}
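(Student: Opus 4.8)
The plan is to handle the three items in order. Item (i) is a direct rewriting of Lemma~\ref{lem:decrease}: for fixed $k\geq1$, definition \eqref{defi:Hk} lets one recast the inequality \eqref{dec:inq} as $\F_{k+1}+\tfrac12\lVert x^{k+1}-x^{k}\rVert_{\M_3^k-C_0\Id}^2+\tfrac12\lVert z^{k+1}-z^{k}\rVert_{\M_2^k}^2\leq\F_k$, and then I would invoke \eqref{assume:vm:positive} in the form $\M_3^k-C_0\Id\succcurlyeq\tfrac12 C_0\Id$ to estimate $\tfrac12\lVert x^{k+1}-x^{k}\rVert_{\M_3^k-C_0\Id}^2\geq\tfrac{C_0}{4}\lVert x^{k+1}-x^{k}\rVert^2$, which yields \eqref{limit:dec:Hk}.

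For (ii), item (i) shows $\{\F_k\}_{k\geq1}$ is nonincreasing, so it suffices to bound it from below. I would complete the square in $\Lr$, writing $\Lr(x^k,z^k,y^k)=g(z^k)+h(x^k)+\tfrac{r}{2}\lVert Ax^k-z^k+\tfrac1r y^k\rVert^2-\tfrac1{2r}\lVert y^k\rVert^2$; since the iterate sequence is bounded, $h(x^k)$ is bounded by continuity of $h$, $g(z^k)$ is bounded from below because $g$ is proper and lower semicontinuous and $\{z^k\}_{k\geq0}$ is bounded, $-\tfrac1{2r}\lVert y^k\rVert^2$ is bounded from below, and the remaining quadratic terms in \eqref{defi:Hk} are nonnegative. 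Hence $\{\F_k\}$ converges to some $\F_*\in\sR$, and Lemma~\ref{lem:conv-pre} applied to \eqref{limit:dec:Hk} gives $\sum_{k\geq1}\lVert x^{k+1}-x^{k}\rVert^2<+\infty$, so $x^{k+1}-x^{k}\to0$. To obtain $y^{k+1}-y^{k}\to0$ I would combine the Lipschitz bound $\lVert u^{k+1}-u^{k}\rVert\leq(L+\mu_1)\lVert x^{k+1}-x^{k}\rVert+\mu_1\lVert x^{k}-x^{k-1}\rVert$ from the proof of Lemma~\ref{lem:decrease} (and its analogue for Algorithm~\ref{algo:al.2}), which forces $u^{k+1}-u^{k}\to0$, with the recursion $\lVert A^*(y^{k+1}-y^{k})\rVert\leq|1-\rho|\,\lVert A^*(y^{k}-y^{k-1})\rVert+\rho\lVert u^{k+1}-u^{k}\rVert$ underlying Lemma~\ref{lem:estimate}(iii); since $0\leq|1-\rho|<1$ this forces $\lVert A^*(y^{k+1}-y^{k})\rVert\to0$, whence $y^{k+1}-y^{k}\to0$ by the surjectivity of $A$ and \eqref{PADMM:eigenvalue}. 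Then \eqref{est:z-bounded-by-x} gives $z^{k+1}-z^{k}\to0$.

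For (iii), definition \eqref{defi:Hk} gives $\Lr(x^k,z^k,y^k)=\F_k-\Tdecy\lVert A^*(y^k-y^{k-1})\rVert^2-\tfrac{C_0}{2}\lVert x^k-x^{k-1}\rVert^2$, and both subtracted terms tend to $0$ by (ii), so $\Lr(x^k,z^k,y^k)\to\F_*$. The multiplier update yields $Ax^k-z^k=\tfrac1{\rho r}(y^k-y^{k-1})\to0$, and since $\{y^k\}_{k\geq0}$ is bounded, $\langle y^k,Ax^k-z^k\rangle\to0$ and $\tfrac{r}{2}\lVert Ax^k-z^k\rVert^2\to0$; therefore $h(x^k)+g(z^k)=\Lr(x^k,z^k,y^k)-\langle y^k,Ax^k-z^k\rangle-\tfrac{r}{2}\lVert Ax^k-z^k\rVert^2\to\F_*$.

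The only nonroutine point is the convergence $y^{k+1}-y^{k}\to0$ in (ii): the $x$-increments are annihilated directly by the summable descent \eqref{limit:dec:Hk}, but the multiplier increments are not, so one must exploit the contractive recursion for $\lVert A^*(y^{k+1}-y^{k})\rVert$ with ratio $|1-\rho|<1$ — equivalently, the telescoping term $\Cesty\big(\lVert A^*(y^{k}-y^{k-1})\rVert-\lVert A^*(y^{k+1}-y^{k})\rVert\big)$ in Lemma~\ref{lem:estimate}(iii) — together with the surjectivity of $A$. Everything else is bookkeeping with the definitions of $\F_k$, $\Lr$, and the multiplier update.
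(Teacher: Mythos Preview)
Your proof is correct and for items (i) and (iii) is essentially identical to the paper's. For item (ii) you take a slightly different but equally valid route on two points. First, to bound $\{\Lr(x^k,z^k,y^k)\}$ from below you use direct estimates (continuity of $h$, lower semicontinuity of $g$ on a compact set, boundedness of $\{y^k\}$), whereas the paper argues by contradiction via a convergent subsequence and lower semicontinuity of $\Lr$; these are just two phrasings of the same compactness/lsc idea. Second, and more substantively, for $y^{k+1}-y^k\to 0$ the paper sums the \emph{squared} estimate
\[
\tfrac{1}{\rho r}\lVert y^{k+1}-y^{k}\rVert^{2}\leq \tfrac{C_1-L}{2}\lVert x^{k+1}-x^{k}\rVert^{2}+\tfrac{C_0}{2}\lVert x^{k}-x^{k-1}\rVert^{2}+T_0\bigl(\lVert A^{*}(y^{k}-y^{k-1})\rVert^{2}-\lVert A^{*}(y^{k+1}-y^{k})\rVert^{2}\bigr)
\]
(derived inside the proof of Lemma~\ref{lem:decrease}) over $k=1,\dots,K$ and telescopes the last term, obtaining the stronger conclusion $\sum_{k\geq0}\lVert y^{k+1}-y^{k}\rVert^{2}<+\infty$. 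Your argument instead exploits the first-order recursion $\lVert A^{*}(y^{k+1}-y^{k})\rVert\leq|1-\rho|\,\lVert A^{*}(y^{k}-y^{k-1})\rVert+\rho\lVert u^{k+1}-u^{k}\rVert$ with contraction factor $|1-\rho|<1$, which gives $y^{k+1}-y^{k}\to0$ directly but not square-summability. Both suffice for the theorem as stated; the paper's version yields a bit more information that is not needed here but mirrors what is used elsewhere in the analysis.
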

\begin{proof} 
\begin{enumerate}
		\item[(i)]  According to \eqref{assume:vm:positive} we have that $\M_{3}^{k} - \Cdecxx \Id \in \sP_{\frac{\Cdecxx}{2}}^{n}$ and thus  \eqref{bound:dec:Hk} implies \eqref{limit:dec:Hk}.
		
		\item[(ii)]  We will show that $\left\lbrace \Lr \left( x^{k} , z^{k} , y^{k} \right) \right\rbrace _{k \geq 0}$ is bounded from below, which will imply that $\left\lbrace \F_{k} \right\rbrace _{k \geq 0}$ is bounded from below as well. Assuming the contrary, as $\left\lbrace \left( x^{k} , z^{k} , y^{k} \right) \right\rbrace _{k \geq 0}$ is bounded, there exists a subsequence $\left\lbrace \left( x^{k_{q}} , z^{k_{q}} , y^{k_{q}} \right) \right\rbrace _{q \geq 0}$ converging to an element $\left( \widehat{x} , \widehat{z} , \widehat{y} \right) \in \sR^{n} \times \sR^{m} \times \sR^{m}$ such that $\left\lbrace \Lr \left( x^{k_{q}} , z^{k_{q}} , y^{k_{q}} \right) \right\rbrace _{q \geq 0}$ converges to $-\infty$ as $q \to +\infty$. However, using the lower semicontinuity of $g$ and the continuity of $h$, we obtain
		\begin{equation*}
		\liminf\limits_{q \to +\infty} \Lr \left( x^{k_{q}} , z^{k_{q}} , y^{k_{q}} \right) \geq h \left( \widehat x \right) + g \left( \widehat z \right) + \left\langle \widehat y , A \widehat x - \widehat z \right\rangle + \dfrac{r}{2} \left\lVert A \widehat x - \widehat z \right\rVert ^{2} ,
		\end{equation*}
		which leads to a contradiction. From Lemma \ref{lem:conv-pre} we conclude that $\left\lbrace \F_{k} \right\rbrace _{k \geq 1}$ is convergent and
		\begin{equation*}
		\mysum_{k \geq 0} \left\lVert x^{k+1} - x^{k} \right\rVert ^{2} < +\infty,
		\end{equation*}
thus $x^{k+1} - x^{k} \to 0$ as $k \to + \infty$. 

We proved in \eqref{dec:al.1:p1}, \eqref{dec:al.1:p2}, \eqref{dec:al.2:p1} and \eqref{dec:al.2:p2} that for every $k \geq 1$
		\begin{align*}
		\dfrac{1}{\rho r} \left\lVert y^{k+1} - y^{k} \right\rVert ^{2} & \leq \dfrac{\Cdecx - L}{2} \left\lVert x^{k+1} - x^{k} \right\rVert ^{2} + \dfrac{\Cdecxx}{2} \left\lVert x^{k} - x^{k-1} \right\rVert ^{2} \\
		& \quad + \Tdecy \left\lVert A^{*} \left( y^{k} - y^{k-1} \right) \right\rVert ^{2} - \Tdecy \left\lVert A^{*} \left( y^{k+1} - y^{k} \right) \right\rVert ^{2} .
		\end{align*}
Summing up the above inequality for $k = 1 , \ldots , K$, for $K > 1$, we get
		\begin{align*}
		\dfrac{1}{\rho r} \mysum_{k = 1}^{K} \left\lVert y^{k+1} - y^{k} \right\rVert ^{2} & \leq \dfrac{\Cdecx - L}{2} \mysum_{k = 1}^{K} \left\lVert x^{k+1} - x^{k} \right\rVert ^{2} + \dfrac{\Cdecxx}{2} \mysum_{k = 1}^{K} \left\lVert x^{k} - x^{k-1} \right\rVert ^{2} \\
		& \quad + \Tdecy \left\lVert A^{*} \left( y^{1} - y^{0} \right) \right\rVert ^{2} - \Tdecy \left\lVert A^{*} \left( y^{K+1} - y^{K} \right) \right\rVert ^{2} \\
& \leq \dfrac{\Cdecx - L}{2} \mysum_{k = 1}^{K} \left\lVert x^{k+1} - x^{k} \right\rVert ^{2} + \dfrac{\Cdecxx}{2} \mysum_{k = 1}^{K} \left\lVert x^{k} - x^{k-1} \right\rVert ^{2}  + \Tdecy \left\lVert A^{*} \left( y^{1} - y^{0} \right) \right\rVert ^{2}.
		\end{align*}
We let $K$ converge to $+ \infty$ and conclude
		\begin{equation*}
		\rho r \mysum_{k \geq 0} \left\lVert A x^{k+1} - z^{k+1} \right\rVert ^{2} = \dfrac{1}{\rho r} \mysum_{k \geq 0} \left\lVert y^{k+1} - y^{k} \right\rVert ^{2} < +\infty ,
		\end{equation*}
		thus $A x^{k+1} - z^{k+1} \to 0$ and $y^{k+1} - y^{k} \to 0$ as $k \to + \infty$. Since $x^{k+1} - x^{k} \to 0$ as $k \to + \infty$, it follows that $z^{k+1} - z^{k} \to 0$ as $k \to + \infty$.
		
		\item[(iii)]  By using \eqref{limit:vanish} and the fact that $\left\lbrace y^{k} \right\rbrace _{k \geq 0}$ is bounded, it follows
		\begin{equation*}
		\F_{*} = \lim\limits_{k \to +\infty} \F_{k} = \lim\limits_{k \to +\infty} \Lr \left( x^{k} , z^{k} , y^{k} \right) = \lim\limits_{k \to +\infty} \left\lbrace h \left( x^{k} \right) + g \left( z^{k} \right) \right\rbrace ,
		\end{equation*}
		which is the desired statement.
		\qedhere
	\end{enumerate}
\end{proof}

The following lemmas provides upper estimates in terms of the iterates for limiting subgradients of the augmented Lagrangian and the regularized augmented Lagrangian $\Fr$, respectively.

\begin{lem}
	\label{lem:subd-Lr-bound}
Suppose that Assumption \ref{ass} holds true and let $\left\lbrace \left( x^{k} , z^{k} , y^{k} \right) \right\rbrace _{k \geq 0}$ be a sequence generated by Algorithm \ref{algo:al.1} or Algorithm \ref{algo:al.2}.  For every $k \geq 0$ we have
	\begin{equation}
	\label{aLr:subd:all}
	d^{k+1} := \left( d_{x}^{k+1} , d_{z}^{k+1} , d_{y}^{k+1} \right) \in  \partial \Lr \left( x^{k+1} , z^{k+1} , y^{k+1} \right),
	\end{equation}
	where
\begin{subequations}
		\begin{align}		
		\label{aLr:subd:x}
		d_{x}^{k+1} & := \Cact \left( \nabla h \left( x^{k+1} \right) - \nabla h \left( x^{k} \right) \right) + A^{*} \left( y^{k+1} - y^{k} \right) + \M_{1}^{k} \left( x^{k} - x^{k+1} \right),\\
		\label{aLr:subd:z}
		d_{z}^{k+1} & := y^{k} - y^{k+1} + r A \left( x^{k} - x^{k+1} \right) + \M_{2}^{k} \left( z^{k} - z^{k+1} \right) ,\\
		\label{aLr:subd:y}
		d_{y}^{k+1} & := \dfrac{1}{\rho r} \left( y^{k+1} - y^{k} \right).
		\end{align}
\end{subequations}
and

\begin{align*}
    \Cact := \begin{cases}
	0, & \mbox{for Algorithm} \ \ref{algo:al.1},\\
    1, & \mbox{for Algorithm} \ \ref{algo:al.2}.
\end{cases} 
\end{align*}

 Moreover, for every $k \geq 0$ it holds	
	\begin{equation}
	\label{aLr:inq}
	\opnorm{d^{k+1}} \leq \CaLrx \left\lVert x^{k+1} - x^{k} \right\rVert + \CaLrz \left\lVert z^{k+1} - z^{k} \right\rVert + \CaLry \left\lVert y^{k+1} - y^{k} \right\rVert ,
	\end{equation}
	where
	\begin{equation*}
	\label{aLr:const}
	\CaLrx := \Cact L + \mu_{1} + r \left\lVert A \right\rVert , \quad \CaLrz := \mu_{2} , \quad \CaLry := 1 + \left\lVert A \right\rVert + \dfrac{1}{\rho r} .
	\end{equation*}
\end{lem}
\begin{proof}
	Let $k \geq 0$ be fixed. Applying the calculus rules of the limiting subdifferential, we obtain
	\begin{subequations}
		\label{aLr:subd:form}
		\begin{align}
		\begin{split}
		\label{aLr:subd:form:x}
		\nabla_{x} \Lr \left( x^{k+1} , z^{k+1} , y^{k+1} \right)		& = \nabla h \left( x^{k+1} \right) + A^{*} y^{k+1} + r A^{*} \left( A x^{k+1} - z^{k+1} \right) ,
		\end{split}
		\\
		\begin{split}
		\label{aLr:subd:form:z}
		\partial_{z} \Lr \left( x^{k+1} , z^{k+1} , y^{k+1} \right) 	& = \partial g \left( z^{k+1} \right) - y^{k+1}  - r \left( A x^{k+1} - z^{k+1} \right) ,
		\end{split}
		\\
		\begin{split}
		\label{aLr:subd:form:y}
		\nabla_{y} \Lr \left( x^{k+1} , z^{k+1} , y^{k+1} \right)	& = A x^{k+1} - z^{k+1} .
		\end{split}
		\end{align}
	\end{subequations}	
	Then \eqref{aLr:subd:y} follows directly from \eqref{aLr:subd:form:y} and \eqref{al.1:algo:y}, respectively, \eqref{al.2:algo:y}, while \eqref{aLr:subd:z} follows from
	\begin{equation*}
	y^{k} + r (A x^{k} - z^{k+1}) + \M_{2}^{k} \left( z^{k} - z^{k+1} \right) \in \partial g \left( z^{k+1} \right),
	\end{equation*}
	which is a consequence of the optimality criterion of  \eqref{al.1:algo:z} and \eqref{al.2:algo:z}, respectively. In order to derive \eqref{aLr:subd:x}, let us notice that for Algorithm \ref{algo:al.1} we have (see \eqref{dec:al.1:opt.con-f}) 
\begin{align}
		\label{aLr:al.1:opt.con-f}
		- A^{*} y^{k} + \M_{1}^{k} \left( x^{k} - x^{k+1} \right) & = \nabla h \left( x^{k+1} \right) + r A^{*} \left( A x^{k+1} - z^{k+1} \right),
		\end{align}
while for  Algorithm \ref{algo:al.2} we have  (see \eqref{dec:al.2:opt.con-f})
		\begin{align}
		\label{aLr:al.2:opt.con-f}
		- \nabla h \left( x^{k} \right) - A^{*} y^{k} + \M_{1}^{k} \left( x^{k} - x^{k+1} \right) & = r A^{*} \left( A x^{k+1} - z^{k+1} \right) .
		\end{align}
By using  \eqref{aLr:subd:form:x} we get the desired statement. 

Relation \eqref{aLr:inq} follows by combining the inequalities
	\begin{align*}
	\begin{split}
	\left\lVert d_{x}^{k+1} \right\rVert  
	\leq & \ \left( \Cact L + \mu_{1} \right) \left\lVert x^{k+1} - x^{k} \right\rVert + \left\lVert A \right\rVert \cdot \left\lVert y^{k+1} - y^{k} \right\rVert ,
	\end{split}
	\\
	\begin{split}
	\left\lVert d_{z}^{k+1} \right\rVert 
	\leq & \ \left\lVert y^{k} - y^{k+1} \right\rVert + r \left\lVert A \right\rVert \cdot \left\lVert x^{k+1} - x^{k} \right\rVert + \mu_{2} \left\lVert z^{k+1} - z^{k} \right\rVert
	\end{split}
	\end{align*}
and \eqref{intro:norm-inequality}.
\end{proof}

\begin{lem}
	\label{lem:subd-Hk-bound}
Suppose that Assumption \ref{ass} holds true and let $\left\lbrace \left( x^{k} , z^{k} , y^{k} \right) \right\rbrace _{k \geq 0}$ be a sequence generated by Algorithm \ref{algo:al.1} or Algorithm \ref{algo:al.2}.  For every $k \geq 0$ we have
	\begin{align}
	\label{aHk:subd:all}	
	D^{k+1} := \left( D_{x}^{k+1} , D_{z}^{k+1} , D_{y}^{k+1} , D_{x'}^{k+1} , D_{y'}^{k+1} \right) 
	\in  \partial \Fr \left( x^{k+1} , z^{k+1} , y^{k+1} , x^{k} , y^{k} \right)
	\end{align}
	where
	\begin{align}
	\label{aHk:subd}
	D_{x}^{k+1} := d_{x}^{k+1} + \Cdecxx \left( x^{k+1} - x^{k} \right) , & \quad D_{z}^{k+1} := d_{z}^{k+1} , \quad D_{y}^{k+1} := d_{y}^{k+1} + 2\Tdecy  A A^{*} \left( y^{k+1} - y^{k} \right) , \nonumber \\
	D_{x'}^{k+1} := - \Cdecxx \left( x^{k+1} - x^{k} \right) , & \quad D_{y'}^{k+1} := - 2\Tdecy  A A^{*} \left( y^{k+1} - y^{k} \right) .
	\end{align}
	 Moreover, for every $k \geq 0$ it holds	
	\begin{equation}
	\label{aHk:inq}
	\opnorm{D^{k+1}} \leq \CaHkx \left\lVert x^{k+1} - x^{k} \right\rVert + \CaHkz \left\lVert z^{k+1} - z^{k} \right\rVert + \CaHky \left\lVert y^{k+1} - y^{k} \right\rVert ,
	\end{equation}
	where 
	\begin{equation*}
	\label{aHk:const}
	\CaHkx := 2 \Cdecxx + \CaLrx, \quad \CaHkz := \CaLrz , \quad \CaHky := \CaLry + 4\Tdecy \left\lVert A \right\rVert ^{2} .
	\end{equation*}
\end{lem}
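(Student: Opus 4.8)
The plan is to compute $\partial\Fr$ directly from its definition \eqref{defi:H} via the subdifferential sum rule together with the separable structure of the regularizing terms, then to read off a concrete subgradient, and finally to estimate its norm with the help of Lemma~\ref{lem:subd-Lr-bound}. To this end I would first observe that, on the product space $\sR^{n}\times\sR^{m}\times\sR^{m}\times\sR^{n}\times\sR^{m}$, the function $\Fr$ is the sum of the proper lower semicontinuous map $\left(x,z,y,x',y'\right)\mapsto\Lr\left(x,z,y\right)$, which does not depend on the primed variables, and of the continuously differentiable map $\left(x,z,y,x',y'\right)\mapsto T_{0}\left\lVert A^{*}\left(y-y'\right)\right\rVert^{2}+\tfrac{\Cdecxx}{2}\left\lVert x-x'\right\rVert^{2}$. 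By the subdifferential sum rule recalled in Section~\ref{subsec:nota} (the second summand being of class $C^{1}$) and the elementary fact that the limiting subdifferential of a function depending only on the block $\left(x,z,y\right)$ equals, on the product space, that block's limiting subdifferential paired with $\0$ in the $x'$- and $y'$-slots, every element of $\partial\Fr\left(x,z,y,x',y'\right)$ is obtained by adding, to an element of $\partial\Lr\left(x,z,y\right)$ placed in the $\left(x,z,y\right)$-slots and completed by $\0$ in the primed slots, the gradient of the regularizing map, whose five blocks are $\Cdecxx\left(x-x'\right)$, $\0$, $2T_{0}AA^{*}\left(y-y'\right)$, $-\Cdecxx\left(x-x'\right)$ and $-2T_{0}AA^{*}\left(y-y'\right)$.

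Evaluating this description at $\left(x^{k+1},z^{k+1},y^{k+1},x^{k},y^{k}\right)$ and taking, as the $\partial\Lr\left(x^{k+1},z^{k+1},y^{k+1}\right)$-component, the particular subgradient $d^{k+1}=\left(d_{x}^{k+1},d_{z}^{k+1},d_{y}^{k+1}\right)$ furnished by Lemma~\ref{lem:subd-Lr-bound}, one obtains an element $D^{k+1}\in\partial\Fr\left(x^{k+1},z^{k+1},y^{k+1},x^{k},y^{k}\right)$ whose five blocks are precisely those displayed in \eqref{aHk:subd}; this proves \eqref{aHk:subd:all}.

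For the estimate \eqref{aHk:inq} I would use the upper bound in \eqref{intro:norm-inequality} to get $\opnorm{D^{k+1}}\leq\left\lVert D_{x}^{k+1}\right\rVert+\left\lVert D_{z}^{k+1}\right\rVert+\left\lVert D_{y}^{k+1}\right\rVert+\left\lVert D_{x'}^{k+1}\right\rVert+\left\lVert D_{y'}^{k+1}\right\rVert$, bound $\left\lVert AA^{*}\left(y^{k+1}-y^{k}\right)\right\rVert\leq\left\lVert A\right\rVert^{2}\left\lVert y^{k+1}-y^{k}\right\rVert$ in the two blocks carrying $AA^{*}\left(y^{k+1}-y^{k}\right)$, and insert the componentwise bounds for $\left\lVert d_{x}^{k+1}\right\rVert$, $\left\lVert d_{z}^{k+1}\right\rVert$ and $\left\lVert d_{y}^{k+1}\right\rVert$ already derived inside the proof of Lemma~\ref{lem:subd-Lr-bound}. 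The sum of those three reproduces the right-hand side of \eqref{aLr:inq}, with constants $\CaLrx$, $\CaLrz$, $\CaLry$; in addition $\left\lVert x^{k+1}-x^{k}\right\rVert$ acquires the coefficient $\Cdecxx$ from $D_{x}^{k+1}$ and again from $D_{x'}^{k+1}$, the quantity $\left\lVert y^{k+1}-y^{k}\right\rVert$ acquires $2T_{0}\left\lVert A\right\rVert^{2}$ from $D_{y}^{k+1}$ and again from $D_{y'}^{k+1}$, and $\left\lVert z^{k+1}-z^{k}\right\rVert$ is unaffected. Collecting the terms yields the constants $\CaHkx=\CaLrx+2\Cdecxx$, $\CaHkz=\CaLrz$ and $\CaHky=\CaLry+4T_{0}\left\lVert A\right\rVert^{2}$, i.e.\ exactly \eqref{aHk:inq}.

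I do not expect any genuine obstacle here: the statement is the subdifferential sum rule applied to a separable regularization, followed by routine norm bookkeeping built on the previous lemma. The only points that need a little care are the product-space form of the subdifferential --- making sure the $x'$- and $y'$-coordinates of $D^{k+1}$ receive solely the gradient contributions $-\Cdecxx\left(x^{k+1}-x^{k}\right)$ and $-2T_{0}AA^{*}\left(y^{k+1}-y^{k}\right)$, with nothing coming from $\partial\Lr$ --- and the reassembly of the constants, which is precisely where the factor $2$ in $2\Cdecxx$ and the factor $4$ in $4T_{0}\left\lVert A\right\rVert^{2}$ arise, each from the two occurrences of the corresponding difference across the unprimed and primed blocks.
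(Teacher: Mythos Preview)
Your proposal is correct and follows essentially the same approach as the paper: apply the subdifferential sum rule to $\Fr=\Lr+\text{(smooth regularizer)}$ on the product space, invoke the subgradient $d^{k+1}$ from Lemma~\ref{lem:subd-Lr-bound}, and then do the norm bookkeeping via \eqref{intro:norm-inequality} together with the componentwise bounds already established there. The paper's proof is terser but logically identical, including the way the factors $2\Cdecxx$ and $4T_{0}\lVert A\rVert^{2}$ arise from the two occurrences (primed and unprimed) of each regularizing term.
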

\begin{proof}
	Let $k \geq 0$ be fixed. Applying the calculus rules of the limiting subdifferential it follows
	\begin{subequations}
		\label{aHk:subd:form}	
		\begin{align}		
		\begin{split}
		\label{aHk:subd:form:x}
\nabla_{x} \Fr \left( x^{k+1} , z^{k+1} , y^{k+1} , x^{k} , y^{k} \right)	& := \nabla_{x} \Lr \left( x^{k+1} , z^{k+1} , y^{k+1} \right) + \Cdecxx \left( x^{k+1} - x^{k} \right) ,
		\end{split}
		\\
		\begin{split}
		\label{aHk:subd:form:z}
		\partial_{z} \Fr \left( x^{k+1} , z^{k+1} , y^{k+1} , x^{k} , y^{k} \right)	& := \partial_{z} \Lr \left( x^{k+1} , z^{k+1} , y^{k+1} \right)
		\end{split}
		\\
		\begin{split}
		\label{aHk:subd:form:y}
		\nabla_{y} \Fr \left( x^{k+1} , z^{k+1} , y^{k+1} , x^{k} , y^{k} \right)	& := \nabla_{y} \Lr \left( x^{k+1} , z^{k+1} , y^{k+1} \right) + 2\Tdecy  A A^{*} \left( y^{k+1} - y^{k} \right) ,
		\end{split}
		\\
		\begin{split}
		\label{aHk:subd:form:xx}
		\nabla_{x'}  \Fr \left( x^{k+1} , z^{k+1} , y^{k+1} , x^{k} , y^{k} \right)	& :=  - \Cdecxx \left( x^{k+1} - x^{k} \right) ,
		\end{split}
		\\
		\begin{split}
		\label{aHk:subd:form:yy}
		\nabla_{y'}  \Fr \left( x^{k+1} , z^{k+1} , y^{k+1} , x^{k} , y^{k} \right)	& := - 2\Tdecy  A A^{*} \left( y^{k+1} - y^{k} \right) .
		\end{split}
		\end{align}
Then \eqref{aHk:subd:all}  follows directly from the above relations and \eqref{aLr:subd:all}. Inequality \eqref{aHk:inq} follows by combining
		\begin{align*}
		\begin{split}
		\left\lVert D_{x}^{k+1} \right\rVert  
		\leq & \ \left\lVert d_{x}^{k+1} \right\rVert + \Cdecxx \left\lVert x^{k+1} - x^{k} \right\rVert ,
		\end{split}
		\\
		\begin{split}
		\left\lVert D_{y}^{k+1} \right\rVert 
		\leq & \ \left\lVert d_{y}^{k+1} \right\rVert + 2\Tdecy \left\lVert A \right\rVert ^{2} \cdot \left\lVert y^{k+1} - y^{k} \right\rVert .
		\end{split}
		\end{align*}
and \eqref{intro:norm-inequality}.
	\end{subequations}
\end{proof}

The following result is a straightforward consequence of Lemma \ref{lem:estimate} and Lemma \ref{lem:subd-Hk-bound}.

\begin{coro}
	\label{coro:subd-Hk-bound}
	Suppose that Assumption \ref{ass} holds true and let $\left\lbrace \left( x^{k} , z^{k} , y^{k} \right) \right\rbrace _{k \geq 0}$ be a sequence generated by Algorithm \ref{algo:al.1} or Algorithm \ref{algo:al.2}.  Then the norm of the element $D^{k+1} \in  \partial \Fr \left( x^{k+1} , z^{k+1} , y^{k+1} , x^{k} , y^{k} \right)$ defined in the previous lemma verifies for every $k \geq 2$ the following estimate
\begin{align}
	\label{subd:inq}
	\opnorm{D^{k+1}} \leq & \ \Csubx \left( \left\lVert x^{k+1} - x^{k} \right\rVert + \left\lVert x^{k} - x^{k-1} \right\rVert + \left\lVert x^{k-1} - x^{k-2} \right\rVert \right) \nonumber \\
	& \ + \Csuby \left( \left\lVert A^{*} \left( y^{k} - y^{k-1} \right) \right\rVert - \left\lVert A^{*} \left( y^{k+1} - y^{k} \right) \right\rVert \right) \nonumber \\
	& \ + \Csubyy \left( \left\lVert A^{*} \left( y^{k-1} - y^{k-2} \right) \right\rVert - \left\lVert A^{*} \left( y^{k} - y^{k-1} \right) \right\rVert \right) ,
\end{align}
where
	\begin{align*}		
	\label{subd:const}
	& \Csubx := \max \left\lbrace \CaHkx + \CaHkz \left\lVert A \right\rVert + \Cestx \CaHky + \dfrac{\Cestx \CaHkz}{\rho r} , \Cestxx \CaHky+ \dfrac{\Cestx \CaHkz}{\rho r}, \dfrac{\Cestxx \CaHkz}{\rho r} \right\rbrace, \nonumber \\
& \Csuby := \left( \CaHky + \dfrac{\CaHkz}{\rho r} \right) \Cesty, \qquad \Csubyy := \dfrac{\CaHkz \Cesty}{\rho r}.
	\end{align*}
\end{coro}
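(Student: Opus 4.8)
The plan is to obtain \eqref{subd:inq} by feeding the estimates of Lemma \ref{lem:estimate} into the bound \eqref{aHk:inq} of Lemma \ref{lem:subd-Hk-bound}, thereby eliminating the increments $\left\lVert z^{k+1} - z^{k} \right\rVert$ and $\left\lVert y^{k+1} - y^{k} \right\rVert$ in favour of $x$-increments and the telescoping quantities $\left\lVert A^{*}\left( y^{j} - y^{j-1} \right) \right\rVert$. All the analytic content is already contained in the two preceding lemmas, so what remains is a careful but elementary substitution and rearrangement, and no genuine obstacle is expected.

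First I would use \eqref{est:z-bounded-by-x} (valid for $k \geq 1$) to replace $\left\lVert z^{k+1} - z^{k} \right\rVert$ in \eqref{aHk:inq} by $\left\lVert A \right\rVert \left\lVert x^{k+1} - x^{k} \right\rVert + \tfrac{1}{\rho r}\left\lVert y^{k+1} - y^{k} \right\rVert + \tfrac{1}{\rho r}\left\lVert y^{k} - y^{k-1} \right\rVert$. Collecting the $y$-terms this yields an intermediate estimate of the shape
\begin{equation*}
\opnorm{D^{k+1}} \leq \left( \CaHkx + \CaHkz \left\lVert A \right\rVert \right) \left\lVert x^{k+1} - x^{k} \right\rVert + \left( \CaHky + \dfrac{\CaHkz}{\rho r} \right) \left\lVert y^{k+1} - y^{k} \right\rVert + \dfrac{\CaHkz}{\rho r} \left\lVert y^{k} - y^{k-1} \right\rVert .
\end{equation*}
Next I would apply \eqref{est:y-bounded-by-x} twice — once at index $k$ to estimate $\left\lVert y^{k+1} - y^{k} \right\rVert$ and once at index $k-1$ to estimate $\left\lVert y^{k} - y^{k-1} \right\rVert$; since \eqref{est:y-bounded-by-x} requires its index to be at least $1$, the second application is exactly what forces the restriction $k \geq 2$ in the statement. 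Each of these introduces two consecutive $x$-increments together with one telescoping difference of the form $\left\lVert A^{*}\left( y^{j} - y^{j-1} \right) \right\rVert - \left\lVert A^{*}\left( y^{j+1} - y^{j} \right) \right\rVert$.

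Finally I would regroup everything according to the five quantities appearing in \eqref{subd:inq}. The coefficient of $\left\lVert x^{k+1} - x^{k} \right\rVert$ works out to $\CaHkx + \CaHkz\left\lVert A \right\rVert + \Cestx\big(\CaHky + \tfrac{\CaHkz}{\rho r}\big)$ and that of $\left\lVert x^{k-1} - x^{k-2} \right\rVert$ to $\tfrac{\CaHkz}{\rho r}\Cestxx$, while $\left\lVert x^{k} - x^{k-1} \right\rVert$ collects the remaining contributions from the two applications of \eqref{est:y-bounded-by-x}; bounding these three coefficients by their maximum produces $\Csubx$. The coefficients of the two telescoping differences come out as $\big(\CaHky + \tfrac{\CaHkz}{\rho r}\big)\Cesty = \Csuby$ and $\tfrac{\CaHkz}{\rho r}\Cesty = \Csubyy$, respectively. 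The only point requiring attention is the bookkeeping — keeping straight which of $\Cestx$, $\Cestxx$ plays the role of the ``forward'' and which the ``backward'' coefficient in \eqref{est:y-bounded-by-x}, and tracking how the contributions to the middle coefficient stack up — but beyond that the argument is a routine rearrangement, since no estimate other than Lemmas \ref{lem:estimate} and \ref{lem:subd-Hk-bound} is invoked.
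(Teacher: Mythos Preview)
Your proposal is correct and follows exactly the route the paper intends: the corollary is stated there as ``a straightforward consequence of Lemma \ref{lem:estimate} and Lemma \ref{lem:subd-Hk-bound}'' with no further proof, and your substitution of \eqref{est:z-bounded-by-x} into \eqref{aHk:inq} followed by two applications of \eqref{est:y-bounded-by-x} (at indices $k$ and $k-1$, which is precisely why $k\geq 2$ is required) is the intended argument. The bookkeeping you outline for the three $x$-coefficients and the two telescoping coefficients matches the structure of \eqref{subd:const}.
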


In the following, we denote by $\omega \left( \left\lbrace  u^{k}  \right\rbrace _{k \geq 0} \right)$ the set of \emph{cluster points} of the sequence $\left\lbrace u^{k} \right\rbrace _{k \geq 0 } \subseteq \sR^{N}$.

\begin{lem}
	\label{lem:cluster-aLr}
	Suppose that Assumption \ref{ass} holds true and let $\left\lbrace \left( x^{k} , z^{k} , y^{k} \right) \right\rbrace _{k \geq 0}$ be a sequence generated by Algorithm \ref{algo:al.1} or Algorithm \ref{algo:al.2}, which is assumed to be bounded.  The following statements are true:
	\begin{enumerate}
\item[(i)] 
		\label{lem:cluster-aLr:i}
		if $\left\lbrace (x^{k_q}, z^{k_q}, y^{k_q}) \right\rbrace _{q \geq 0}$ is a subsequence of $\left\lbrace \left( x^{k}, z^{k} , y^{k} \right) \right\rbrace _{k \geq 0}$ which converges to $(\widehat{x}, \widehat{z}, \widehat{y})$  as $q \to +\infty$, then
		\begin{equation*}
		\lim\limits_{q \to \infty} \Lr \left( x^{k_q}, z^{k_q}, y^{k_q} \right) = \Lr \left( \widehat{x}, \widehat{z}, \widehat{y} \right);
		\end{equation*}	
\item[(ii)] 
		\label{lem:cluster-aLr:ii}
		it holds
		\begin{align*}
		\omega \left( \left\lbrace \left( x^{k}, z^{k} , y^{k} \right) \right\rbrace _{k \geq 0} \right) \subseteq & \ \crit \left(\Lr \right) \nonumber \\ 
		\subseteq & \ \left\lbrace \left( \widehat{x} , \widehat{z} , \widehat{y} \right) \in \sR^{n} \times \sR^{m} \times \sR^{m} \colon - A^{*} \widehat{y} = \nabla h \left( \widehat{x} \right), \widehat{y} \in \partial g \left( \widehat{z} \right), \widehat{z} = A \widehat{x} \right\rbrace;
		\end{align*}		
\item[(iii)] 
		\label{lem:cluster-aLr:iii}
		we have $\lim\limits_{k \to +\infty} \dist \left[\left( x^{k}, z^{k} , y^{k} \right), \omega \left( \left\lbrace \left( x^{k}, z^{k} , y^{k} \right) \right\rbrace _{k \geq 0} \right)\right] = 0$;
\item[(iv)] 
		\label{lem:cluster-aLr:iv}
		the set $\omega \left( \left\lbrace \left( x^{k}, z^{k} , y^{k} \right) \right\rbrace _{k \geq 0} \right)$ is nonempty, connected and compact;		
\item[(v)] 
		\label{lem:cluster-aLr:v}
		the function $\Lr$  takes on $\omega \left( \left\lbrace \left( x^{k}, z^{k} , y^{k} \right) \right\rbrace _{k \geq 0} \right)$ the value $\F_{*} = \lim_{k \to +\infty} \Lr \left( x^{k} , z^{k} , y^{k} \right)$, as the objective function $g \circ A + h$ does on the projection of the set $\omega \left( \left\lbrace \left( x^{k}, z^{k} , y^{k} \right) \right\rbrace _{k \geq 0} \right)$ onto the space $\sR^n$ corresponding to the first component.
	\end{enumerate}
\end{lem}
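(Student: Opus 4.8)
The plan is to combine the convergence facts established in Theorem~\ref{thm:limit} — that $\F_k$, $\Lr(x^k,z^k,y^k)$ and $h(x^k)+g(z^k)$ all converge to $\F_*$, and that $x^{k+1}-x^k$, $z^{k+1}-z^k$, $y^{k+1}-y^k$ all tend to $0$ — with the subgradient estimate of Lemma~\ref{lem:subd-Lr-bound} and the closedness criterion of the limiting subdifferential, proving the five items essentially in the order stated.

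\emph{Step (i).} The inequality $\liminf_{q}\Lr(x^{k_q},z^{k_q},y^{k_q})\ge\Lr(\widehat x,\widehat z,\widehat y)$ is immediate from the lower semicontinuity of $g$ and the continuity of the remaining terms of $\Lr$. For the reverse inequality I would test the minimality of $z^{k_q}$ in the $z$-update \eqref{al.1:algo:z} (equivalently \eqref{al.2:algo:z}), written at iteration $k_q-1$, against the fixed point $\widehat z$. Passing to the $\limsup$ and using that $x^{k_q-1}\to\widehat x$, $y^{k_q-1}\to\widehat y$, $z^{k_q-1}\to\widehat z$, $z^{k_q}\to\widehat z$ (all consequences of \eqref{limit:vanish}) together with $\sup_k\lVert\M_2^k\rVert=\mu_2<+\infty$, every term other than $g(z^{k_q})$ converges and cancels, leaving $\limsup_q g(z^{k_q})\le g(\widehat z)$; since the remaining terms of $\Lr$ converge, this yields $\limsup_q\Lr(x^{k_q},z^{k_q},y^{k_q})\le\Lr(\widehat x,\widehat z,\widehat y)$, and (i) follows.

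\emph{Step (ii).} Given a cluster point $(\widehat x,\widehat z,\widehat y)$ and a subsequence converging to it, Lemma~\ref{lem:subd-Lr-bound} provides $d^{k_q}\in\partial\Lr(x^{k_q},z^{k_q},y^{k_q})$ with $\opnorm{d^{k_q}}\le\CaLrx\lVert x^{k_q}-x^{k_q-1}\rVert+\CaLrz\lVert z^{k_q}-z^{k_q-1}\rVert+\CaLry\lVert y^{k_q}-y^{k_q-1}\rVert\to0$ by \eqref{limit:vanish}. Combining this with the convergence of the Lagrangian values from (i) and the closedness criterion of $\partial\Lr$ gives $0\in\partial\Lr(\widehat x,\widehat z,\widehat y)$, i.e.\ $(\widehat x,\widehat z,\widehat y)\in\crit(\Lr)$. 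The second inclusion is then a direct computation from \eqref{aLr:subd:form:x}--\eqref{aLr:subd:form:y}: the $y$-component forces $\widehat z=A\widehat x$, and substituting this into the $x$- and $z$-components yields $-A^*\widehat y=\nabla h(\widehat x)$ and $\widehat y\in\partial g(\widehat z)$.

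\emph{Steps (iii)--(v).} Item (iii) follows from boundedness alone: any subsequence staying at distance $\ge\varepsilon$ from $\omega(\{(x^k,z^k,y^k)\}_{k\ge0})$ would admit a convergent sub-subsequence whose limit lies in that set, a contradiction. For (iv), nonemptiness is Bolzano--Weierstrass, closedness is a general property of cluster sets, boundedness gives compactness, and connectedness is the classical consequence of $\lVert(x^{k+1},z^{k+1},y^{k+1})-(x^k,z^k,y^k)\rVert\to0$ for a bounded sequence. Finally, (v) follows from (i) together with Theorem~\ref{thm:limit}: for any cluster point, $\Lr(\widehat x,\widehat z,\widehat y)=\lim_q\Lr(x^{k_q},z^{k_q},y^{k_q})=\F_*$; and if $\widehat x\in\proj_{\sR^n}[\omega(\cdot)]$, then by (ii) $A\widehat x=\widehat z$, so $(g\circ A+h)(\widehat x)=g(\widehat z)+h(\widehat x)=\Lr(\widehat x,\widehat z,\widehat y)=\F_*$. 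The delicate point is the $\limsup$ argument in (i): one must control $g(z^{k_q})$ rather than $g(z^{k_q+1})$, which forces invoking the $z$-subproblem one index earlier, and one must check that the metric terms $\lVert\,\cdot\,\rVert^2_{\M_2^{k_q-1}}$ vanish in the limit (this is where $\mu_2<+\infty$ enters); everything else reduces to bookkeeping on Theorem~\ref{thm:limit} and routine subdifferential calculus.
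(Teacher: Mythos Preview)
Your proposal is correct and follows essentially the same route as the paper's proof: the $z$-subproblem at index $k_q-1$ tested against $\widehat z$ for the $\limsup$ in (i), Lemma~\ref{lem:subd-Lr-bound} plus the closedness criterion for (ii), and the standard asymptotic-regularity arguments (which the paper defers to \cite{Bolte-Sabach-Teboulle}) for (iii)--(iv). Your explicit remarks on the index shift and on why $\mu_2<+\infty$ is needed to kill the metric terms are exactly the points the paper leaves implicit.
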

\begin{proof}
	Let $\left( \widehat{x}, \widehat{z}, \widehat{y} \right) \in \omega \left( \left\lbrace \left( x^{k}, z^{k} , y^{k} \right) \right\rbrace _{k \geq 0} \right)$ and $\left\lbrace \left( x^{k_q}, z^{k_q}, y^{k_q} \right) \right\rbrace _{q \geq 0}$ be a subsequence of $\left\lbrace x^{k}, z^{k} , y^{k} \right\rbrace _{k \geq 0}$ converging to $\left( \widehat{x}, \widehat{z}, \widehat{y} \right)$ as $q \to +\infty$.
	
\item[(i)] 
	From either \eqref{al.1:algo:z} or \eqref{al.2:algo:z} we obtain for every $q \geq 1$
	\begin{align*}
	& g \left( z^{k_{q}} \right) + \left\langle y^{k_{q}-1} , A x^{k_{q}-1} - z^{k_{q}} \right\rangle + \dfrac{r}{2} \left\lVert A x^{k_{q}-1} - z^{k_{q}} \right\rVert ^{2} + \dfrac{1}{2} \left\lVert z^{k_{q}} - z^{k_{q}-1} \right\rVert _{\M_{2}^{k_{q}-1}} ^{2} \\
	\leq & \ g \left( \widehat{z} \right) + \left\langle y^{k_{q}-1} , A x^{k_{q}-1} - \widehat{z} \right\rangle + \dfrac{r}{2} \left\lVert A x^{k_{q}-1} - \widehat{z} \right\rVert ^{2} + \dfrac{1}{2} \left\lVert \widehat{z} - z^{k_{q}-1} \right\rVert _{\M_{2}^{k_{q}-1}} ^{2}.
	\end{align*}
	Taking the limit superior on both sides of the above inequalities we get
	\begin{equation*}
	\qquad \limsup\limits_{q \to +\infty} g \left( z^{k_{q}} \right) \leq g \left( \widehat{z} \right),
	\end{equation*}
	which, combined with the lower semicontinuity of $g$, leads to
	\begin{equation*}
	\lim\limits_{q \to +\infty} g \left( z^{k_{q}} \right) = g \left( \widehat{z} \right) .
	\end{equation*}
	Since $h$ is continuous, we further obtain
	\begin{align*}
	\label{PADMM:lim-sub}
	\begin{split}
	\lim\limits_{q \to +\infty} \Lr \left( x^{k_q}, z^{k_q}, y^{k_q} \right) & = \lim\limits_{q \to +\infty} \left[ g \left( z^{k_{q}} \right) + h \left( x^{k_{q}} \right) + \left\langle y^{k_{q}}, A x^{k_{q}} - z^{k_{q}}  \right\rangle  + \dfrac{r}{2} \left\lVert A x^{k_{q}} - z^{k_{q}} \right\rVert ^{2} \right] \\
	& = g \left( \widehat{z} \right) + h \left( \widehat{x} \right) + \left\langle \widehat{y} , A \widehat{x} - \widehat{z} \right\rangle + \dfrac{r}{2} \left\lVert A \widehat{x} - \widehat{z} \right\rVert ^{2} = \Lr \left( \widehat{x} , \widehat{z} , \widehat{y} \right) .
	\end{split}
	\end{align*}
	
\item[(ii)] 
	For the sequence $\left\lbrace d^{k} \right\rbrace _{k \geq 0}$ defined in \eqref{aLr:subd:x}-\eqref{aLr:subd:y} we have that $d^{k_q} \in \partial \Lr(x^{k_q}, z^{k_q}, y^{k_q} )$ for every $q \geq 1$ and $d^{k_q} \to 0$ as $q \to +\infty$, while $\left( x^{k_q}, z^{k_q}, y^{k_q} \right) \to \left( \widehat{x}, \widehat{z}, \widehat{y} \right)$ and $\Lr \left( x^{k_q}, z^{k_q}, y^{k_q} \right) \to \Lr \left( \widehat{x}, \widehat{z}, \widehat{y} \right)$ as $q \to +\infty$. The closedness criterion of the limiting subdifferential guarantees that $0 \in \partial \Lr \left( \widehat{x}, \widehat{z}, \widehat{y} \right)$ or, in other words, $\left( \widehat{x}, \widehat{z}, \widehat{y} \right) \in \crit \left( \Lr \right)$. Choosing now an element $\left( \widehat{x}, \widehat{z}, \widehat{y} \right) \in \crit \left( \Lr \right)$ it holds
	\begin{align*}
	0 & \ = \nabla h \left(\widehat{x} \right) + A^{*} \widehat{y} + r A^{*} \left( A \widehat{x} - \widehat{z} \right) \\
	0 & \ \in \partial g \left(\widehat{z} \right) - \widehat{y} - r \left( A \widehat{x} - \widehat{z} \right) \\
	0 & \ = A \widehat{x} - \widehat{z},
	\end{align*}
	which is further equivalent to
	\begin{equation*}
	- A^{*} \widehat{y} = \nabla h \left( \widehat{x} \right) , \ \widehat{y} \in \partial g \left( \widehat{z} \right) , \ \widehat{z} = A \widehat{x} .
	\end{equation*}
	
\item[(iii)-(iv)]  The proof follows in the lines of the proof of Theorem 5 (ii)-(iii) in \cite{Bolte-Sabach-Teboulle}, also by taking into consideration \cite[Remark 5]{Bolte-Sabach-Teboulle}, according to which the properties in (iii) and (iv) are generic for sequences satisfying $\left( x^{k+1} , z^{k+1} , y^{k+1} \right) - \left( x^{k} , z^{k} , y^{k} \right) \to 0$ as $k \to +\infty$, which is indeed the case due to \eqref{limit:vanish}.
	
	\item[(v)] The conclusion follows according to the first two statements of this theorem and of the third statement of  Theorem \ref{thm:limit}.
\end{proof}

\begin{rmk}
	An element $\left( \widehat{x} , \widehat{z} , \widehat{y} \right) \in \sR^{n} \times \sR^{m} \times \sR^{m}$ fulfilling 
	\begin{equation*}
	-A^{*} \widehat{y} = \nabla h \left( \widehat{x} \right), \ \widehat{y} \in \partial g \left( \widehat{z} \right), \ \widehat{z} = A \widehat{x}
	\end{equation*} 	
	is a so-called \emph{KKT point} of the optimization problem \eqref{intro:problem}. For such a KKT point we have
	\begin{equation}\label{convexsub}
	0 = A^{*} \partial g \left( A \widehat{x} \right) + \nabla h \left( \widehat{x} \right).
	\end{equation}
	
	When $A$ is injective this is further equivalent to
	\begin{equation}\label{convexsub2}
	0 \in \partial (g \circ A)(\widehat{x}) + \nabla h \left( \widehat{x} \right) = \partial \left( g \circ A + h \right) \left( \widehat{x} \right),
	\end{equation}
	in other words, $\widehat x$ is a \emph{critical point} of the optimization problem \eqref{intro:problem}.
	
If the functions $g$ and $h$ are convex, then \eqref{convexsub} and \eqref{convexsub2} are equivalent, which means that $\widehat{x}$ is a \emph{global optimal solution} of the optimization problem \eqref{intro:problem}. In this case, $\widehat{y}$ is a \emph{global optimal solution} of the Fenchel dual problem of \eqref{intro:problem}.
\end{rmk}

By combining  Lemma \ref{lem:subd-Hk-bound}, Theorem \ref{thm:limit} and Lemma \ref{lem:cluster-aLr}, one obtains the following result.

\begin{lem}
	\label{lem:cluster-aHk}
	Suppose that Assumption \ref{ass} holds true and let $\left\lbrace \left( x^{k} , z^{k} , y^{k} \right) \right\rbrace _{k \geq 0}$ be a sequence generated by Algorithm \ref{algo:al.1} or Algorithm \ref{algo:al.2}, which is assumed to be bounded.  Let $\Omega := \omega \left( \left\lbrace \left( x^{k} , z^{k} , y^{k} , x^{k-1} , y^{k-1} \right) \right\rbrace _{k \geq 1} \right)$. The following statements are true:
	\begin{enumerate}
\item[(i)] 
		\label{lem:cluster-aHk:i}
		it holds
		\begin{equation*}
		\Omega \subseteq \left\lbrace \left( \widehat{x} , \widehat{z} , \widehat{y} , \widehat{x} , \widehat{y} \right) \in \sR^{n} \times \sR^{m} \times \sR^{m} \times \sR^{n} \times \sR^{m} \colon \left( \widehat{x} , \widehat{z} , \widehat{y} \right) \in \crit \left( \Lr \right) \right\rbrace ;
		\end{equation*}
		
\item[(ii)] 
		\label{lem:cluster-aHk:ii}
		we have
		\begin{equation*}
		\lim\limits_{k \to +\infty} \dist \left[ \left( x^{k} , z^{k} , y^{k} , x^{k-1} , y^{k-1} \right), \Omega \right] = 0 ;
		\end{equation*}
		
\item[(iii)] 
		\label{lem:cluster-aHk:iii}
		the set $\Omega$ is nonempty, connected and compact;
		
\item[(iv)] 
		\label{lem:cluster-aHk:iv}
		the regularized augmented Lagrangian $\Fr$ takes on $\Omega$ the value $\F_{*} = \lim_{k \to +\infty} \F_{k}$, as the objective function $g \circ A + h$ does on the projection of the set $\Omega$ onto the space $\sR^n$ corresponding to the first component.
	\end{enumerate}
\end{lem}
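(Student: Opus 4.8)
The plan is to obtain all four assertions by transporting to the augmented state space the information already collected for the sequence $\left\lbrace (x^{k},z^{k},y^{k}) \right\rbrace_{k\ge0}$. The pivotal observation is that the block $(x^{k},z^{k},y^{k},x^{k-1},y^{k-1})$ differs from $(x^{k},z^{k},y^{k},x^{k},y^{k})$ only through the increments $x^{k}-x^{k-1}$ and $y^{k}-y^{k-1}$, both of which tend to $0$ by \eqref{limit:vanish}. Consequently, if a subsequence $(x^{k_q},z^{k_q},y^{k_q},x^{k_q-1},y^{k_q-1})$ converges to $(\widehat x,\widehat z,\widehat y,\widehat x',\widehat y')$, then necessarily $\widehat x'=\widehat x$, $\widehat y'=\widehat y$, while $(x^{k_q},z^{k_q},y^{k_q})\to(\widehat x,\widehat z,\widehat y)$ forces $(\widehat x,\widehat z,\widehat y)\in\omega(\left\lbrace (x^{k},z^{k},y^{k}) \right\rbrace_{k\ge0})\subseteq\crit(\Lr)$ by Lemma~\ref{lem:cluster-aLr}(ii); this gives claim~(i).

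For claims (ii) and (iii) I would appeal to the fact that these are generic properties of the cluster set of a \emph{bounded} sequence whose consecutive terms converge to one another. Boundedness of $\left\lbrace (x^{k},z^{k},y^{k}) \right\rbrace_{k\ge0}$ is assumed, so $\left\lbrace (x^{k},z^{k},y^{k},x^{k-1},y^{k-1}) \right\rbrace_{k\ge1}$ is bounded as well, and
\[
(x^{k+1},z^{k+1},y^{k+1},x^{k},y^{k})-(x^{k},z^{k},y^{k},x^{k-1},y^{k-1})\longrightarrow 0\quad\text{as }k\to+\infty
\]
by \eqref{limit:vanish}. I would then invoke, verbatim, the reasoning of \cite[Theorem~5(ii)--(iii)]{Bolte-Sabach-Teboulle} together with \cite[Remark~5]{Bolte-Sabach-Teboulle}, exactly as was done for Lemma~\ref{lem:cluster-aLr}(iii)--(iv), to conclude that $\dist[(x^{k},z^{k},y^{k},x^{k-1},y^{k-1}),\Omega]\to0$ and that $\Omega$ is nonempty, connected and compact.

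For claim (iv) I would fix $(\widehat x,\widehat z,\widehat y,\widehat x,\widehat y)\in\Omega$ and a subsequence converging to it. By Lemma~\ref{lem:cluster-aLr}(i) we have $\Lr(x^{k_q},z^{k_q},y^{k_q})\to\Lr(\widehat x,\widehat z,\widehat y)$, and since the regularization terms $T_0\left\lVert A^{*}(y^{k_q}-y^{k_q-1})\right\rVert^{2}$ and $\tfrac{C_0}{2}\left\lVert x^{k_q}-x^{k_q-1}\right\rVert^{2}$ vanish by \eqref{limit:vanish}, the definition \eqref{defi:Hk} of $\Fk$ yields $\F_{k_q}\to\Lr(\widehat x,\widehat z,\widehat y)=\Fr(\widehat x,\widehat z,\widehat y,\widehat x,\widehat y)$. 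Because $\left\lbrace \F_{k} \right\rbrace_{k\ge1}$ converges to $\F_{*}$ by Theorem~\ref{thm:limit}(ii)--(iii), it follows that $\Fr$ is constantly equal to $\F_{*}$ on $\Omega$. Finally, from (i) and Lemma~\ref{lem:cluster-aLr}(ii) we have $\widehat z=A\widehat x$, whence $(g\circ A+h)(\widehat x)=g(\widehat z)+h(\widehat x)=\Lr(\widehat x,\widehat z,\widehat y)=\F_{*}$ (using $A\widehat x-\widehat z=\0$), so that $g\circ A+h$ takes the value $\F_{*}$ on $\proj_{\sR^{n}}\Omega$; together with Theorem~\ref{thm:limit}(iii) this identifies $\F_{*}$ with $\lim_{k\to+\infty}\F_{k}$.

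The main obstacle is, frankly, minimal: the heavy lifting — the limiting-subdifferential estimate of Lemma~\ref{lem:subd-Hk-bound} and the critical-point characterization of Lemma~\ref{lem:cluster-aLr} — is already in place, and what remains is bookkeeping. The only point requiring genuine care is ensuring that the two shifted coordinates collapse correctly in the limit, so that no spurious cluster point of the augmented sequence arises in which the shifted block $(x^{k-1},y^{k-1})$ fails to coincide with $(x^{k},y^{k})$; this is precisely why the full strength of \eqref{limit:vanish}, namely that all three increments vanish, is needed at the very outset.
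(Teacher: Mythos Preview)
Your proposal is correct and mirrors the paper's approach: the paper itself does not give a detailed proof but simply states that the result follows ``by combining Lemma~\ref{lem:subd-Hk-bound}, Theorem~\ref{thm:limit} and Lemma~\ref{lem:cluster-aLr}'', and your argument is precisely the natural way to carry out this combination --- using \eqref{limit:vanish} to collapse the shifted coordinates, Lemma~\ref{lem:cluster-aLr} for the critical-point and continuity statements, and the generic Bolte--Sabach--Teboulle argument for (ii)--(iii). One minor remark: although you cite Lemma~\ref{lem:subd-Hk-bound} in your closing paragraph, your actual proof of (i) never uses it (you route through Lemma~\ref{lem:cluster-aLr}(ii) instead), which is perfectly fine since the statement only requires $(\widehat x,\widehat z,\widehat y)\in\crit(\Lr)$ rather than criticality of $\Fr$.
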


\subsection{Convergence analysis under Kurdyka-\Loja assumptions}
\label{subsec:conv-KL}

In this subsection we will prove global convergence for the sequence $\left\lbrace \left( x^{k} , z^{k} , y^{k} \right) \right\rbrace _{k \geq 0}$ generated by the two nonconvex proximal ADMM algorithms in the context of \emph{\KL \ property}. The origins of this notion go back to the pioneering work of Kurdyka who introduced in \cite{Kurdyka} a general form of the \Loja inequality (see \cite{Lojasiewicz}). A further extension to the nonsmooth setting has been proposed and studied in \cite{Bolte-Daniilidis-Lewis, Bolte-Daniilidis-Lewis-Shiota, Bolte-Daniilidis-Ley-Mazet}.

We recall that the \emph{distance function} of a given set $\Omega \subseteq \sR^{N}$ is defined for every $x$ by $\dist \left( x , \Omega \right) := \inf \left\lbrace \left\lVert x - y \right\rVert \colon y \in \Omega \right\rbrace$. 
\begin{defi}
	\label{defi:Class-Phi}
	Let $\eta \in \left( 0 , + \infty \right]$. We denote by $\Phi_{\eta}$ the set of all concave and continuous functions $\varphi \colon \left[ 0 , \eta \right) \to [0,+\infty)$ which satisfy the following conditions:
	\begin{enumerate}
		\item $\varphi \left( 0 \right) = 0$;
		\item $\varphi$ is $\sC^{1}$ on $\left( 0 , \eta \right)$ and continuous at $0$;
		\item for all $s \in \left( 0 , \eta \right): \varphi ' \left( s \right) > 0$.
	\end{enumerate}
\end{defi}
\begin{defi}	
	\label{defi:KL}
	Let $\Psi \colon \sR^{N} \to \sR \cup \left\lbrace + \infty \right\rbrace$ be proper and lower 
	semicontinuous.
	\begin{enumerate}
		\item The function $\Psi$ is said to have the Kurdyka-\Loja (\KL) property at a point $\widehat{u} \in \dom \partial \Psi := \left\lbrace u \in \sR^{N} \colon \partial \Psi \left( u \right) \neq \emptyset \right\rbrace$, if there exists $\eta \in \left( 0 , + \infty \right]$, a neighborhood $U$ of $\widehat{u}$ and a function $\varphi \in \Phi_{\eta}$ such that for every
		\begin{equation*}
		\label{intro:intersection}
		u \in U \cap \left[ \Psi \left( \widehat{u} \right) < \Psi \left( u \right) < \Psi \left( \widehat{u} \right) + \eta \right] 
		\end{equation*}
		it holds
		\begin{equation*}
		\label{intro:KL-inequality-0}
		\varphi ' \left( \Psi \left( u \right) - \Psi \left( \widehat{u} \right) \right) \cdot \dist \left( \0 , \partial \Psi \left( u \right) \right) \geq 1 .
		\end{equation*}		
		\item If $\Psi$ satisfies the \KL \ property at each point of $\dom \partial \Psi$, then $\Psi$ is called  \KL \ function.
	\end{enumerate}
\end{defi}
The functions $\varphi$ belonging to the set $\Phi_{\eta}$ for $\eta \in \left(0 , + \infty \right]$ are called \emph{desingularization functions}. The \KL \ property reveals the possibility to reparameterize the values of $\Psi$ in order to avoid flatness around the critical points. 
To the class of \KL \ functions belong semialgebraic, real subanalytic, uniformly convex functions and convex functions satisfying a growth condition. We refer the reader to \cite{Attouch-Bolte,Attouch-Bolte-Redont-Soubeyran, Attouch-Bolte-Svaiter, Bolte-Daniilidis-Lewis, Bolte-Daniilidis-Lewis-Shiota, Bolte-Daniilidis-Ley-Mazet, Bolte-Sabach-Teboulle} and to the references therein for more properties of \KL \ functions and illustrating examples.

The following result, taken from \cite[Lemma 6]{Bolte-Sabach-Teboulle}, will be crucial in our convergence analysis.
\begin{lem} \textbf{(Uniformized \KL \ property)}
	\label{lem:uniformized}
	Let $\Omega$ be a compact set and $\Psi \colon \sR^{N} \to \sR \cup \left\lbrace + \infty \right\rbrace$ be a proper and lower semicontinuous function. Assume that $\Psi$ is constant on $\Omega$ and satisfies the \KL \ property at each point of $\Omega$. Then there exist $\varepsilon > 0, \eta > 0$ 
	and $\varphi \in \Phi_{\eta}$ such that for every $\widehat{u} \in \Omega$ and  every element $u$ in the intersection
	\begin{equation*}
	\label{intro:intersection-all}
	\left\lbrace u \in \sR^{N} \colon \dist \left( u , \Omega \right) < \varepsilon \right\rbrace \cap \left[ \Psi \left( \widehat{u} \right) < \Psi \left( u \right) < \Psi \left( \widehat{u} \right) + \eta \right]
	\end{equation*}
	it holds
	\begin{equation*}
	\label{intro:KL-inequality}
	\varphi ' \left( \Psi \left( u \right) - \Psi \left( \widehat{u} \right) \right) \cdot \dist \left( \0 , \partial \Psi \left( u \right) \right) \geq 1 .
	\end{equation*}
\end{lem}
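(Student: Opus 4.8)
The plan is to reduce the uniform statement to finitely many pointwise instances of the \KL{} property via a compactness argument, and then to amalgamate the finitely many desingularization functions into a single admissible one by summation.

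First I would fix the common value $c := \Psi(\widehat{u})$, which is well defined since $\Psi$ is assumed constant on $\Omega$ and $\widehat{u}$ ranges over $\Omega$. For each $\widehat{u} \in \Omega$ the \KL{} property supplies $\eta_{\widehat{u}} \in (0,+\infty]$, a radius $\varepsilon_{\widehat{u}} > 0$ (shrinking the neighbourhood $U$ to an open ball $B(\widehat{u}, \varepsilon_{\widehat{u}})$, which is possible since $U$ is a neighbourhood of $\widehat{u}$), and a function $\varphi_{\widehat{u}} \in \Phi_{\eta_{\widehat{u}}}$ such that the \KL{} inequality holds on $B(\widehat{u}, \varepsilon_{\widehat{u}}) \cap \left[ c < \Psi < c + \eta_{\widehat{u}} \right]$. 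The balls $\left\{ B\left( \widehat{u}, \tfrac{1}{2}\varepsilon_{\widehat{u}} \right) \right\}_{\widehat{u} \in \Omega}$ form an open cover of the compact set $\Omega$, so I extract a finite subcover associated with points $u_{1}, \ldots, u_{p} \in \Omega$, with radii $\varepsilon_{1}, \ldots, \varepsilon_{p}$, desingularization functions $\varphi_{1}, \ldots, \varphi_{p}$ and thresholds $\eta_{1}, \ldots, \eta_{p}$.

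Next I would set $\varepsilon := \tfrac{1}{2} \min_{1 \leq i \leq p} \varepsilon_{i} > 0$ and $\eta := \min_{1 \leq i \leq p} \eta_{i} > 0$, and define $\varphi := \sum_{i=1}^{p} \varphi_{i}$, viewed as a function on $[0, \eta)$. A Lebesgue-number type estimate shows that every $u$ with $\dist(u, \Omega) < \varepsilon$ lies in some $B(u_{i}, \varepsilon_{i})$: picking $\widehat{u} \in \Omega$ with $\left\lVert u - \widehat{u} \right\rVert < \varepsilon$ and an index $i$ with $\widehat{u} \in B\left( u_{i}, \tfrac{1}{2}\varepsilon_{i} \right)$, one gets $\left\lVert u - u_{i} \right\rVert < \varepsilon + \tfrac{1}{2}\varepsilon_{i} \leq \varepsilon_{i}$. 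Moreover $\varphi \in \Phi_{\eta}$, being a finite sum of concave $\sC^{1}$ functions that vanish at $0$, hence concave, $\sC^{1}$ on $(0, \eta)$ and continuous at $0$, with $\varphi'(s) = \sum_{i=1}^{p} \varphi_{i}'(s) > 0$ there; in addition $\varphi'(s) \geq \varphi_{i}'(s)$ for every $i$ and every $s \in (0, \eta)$, since all summands are positive.

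Finally, given $\widehat{u} \in \Omega$ and $u$ in $\left\{ v : \dist(v, \Omega) < \varepsilon \right\} \cap \left[ c < \Psi < c + \eta \right]$, I choose $i$ with $u \in B(u_{i}, \varepsilon_{i})$. Since $u_{i} \in \Omega$ we have $\Psi(u_{i}) = c$ and $\eta \leq \eta_{i}$, so $u$ belongs to $B(u_{i}, \varepsilon_{i}) \cap \left[ \Psi(u_{i}) < \Psi < \Psi(u_{i}) + \eta_{i} \right]$, and the pointwise \KL{} inequality at $u_{i}$ yields $\varphi_{i}'\!\left( \Psi(u) - c \right) \cdot \dist\!\left( \0, \partial \Psi(u) \right) \geq 1$ (trivially so if $\partial \Psi(u) = \emptyset$). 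Using $\varphi'\!\left( \Psi(u) - c \right) \geq \varphi_{i}'\!\left( \Psi(u) - c \right)$ and $\Psi(\widehat{u}) = c$ gives $\varphi'\!\left( \Psi(u) - \Psi(\widehat{u}) \right) \cdot \dist\!\left( \0, \partial \Psi(u) \right) \geq 1$, which is exactly the assertion. The only genuinely delicate points are making the covering radius uniform over $\Omega$ — handled by passing to half-radii before extracting the finite subcover — and producing a single admissible $\varphi$ dominating all the $\varphi_{i}'$ simultaneously — handled by the summation trick; the remainder is routine bookkeeping.
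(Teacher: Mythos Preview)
Your proof is correct and is essentially the standard argument from \cite[Lemma 6]{Bolte-Sabach-Teboulle}, which the paper cites without reproducing a proof. There is nothing to compare against in the paper itself; your compactness-plus-summation construction of the common desingularization function is precisely the approach of the original reference.
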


Working in the hypotheses of Lemma \ref{lem:cluster-aHk} we define for every $k \geq 1$
\begin{equation*}
\label{defi:err}
\E_{k} := \F \left( x^{k} , z^{k} , y^{k} , x^{k-1} , y^{k-1} \right) - \F_{*} = \F_k -  \F_{*} \geq 0,
\end{equation*}
where $\F_{*}$ is the limit of $\left\lbrace \F_{k} \right\rbrace _{k \geq 1}$ as $k \to +\infty$.
The sequence $\left\lbrace \E_{k} \right\rbrace _{k \geq 1}$ is monotonically decreasing and it converges to $0$ as $k \to +\infty$. 

The next result shows that, if the regularization of the augmented Lagrangian $\Fr$ is a \KL \ function, then the sequence $\left\lbrace \left(x^{k}, z^k, y^k  \right) \right\rbrace _{k \geq 0}$ converges to a KKT point of the optimization problem \eqref{intro:problem}.
\begin{thm}
	\label{thm:conv}
Suppose that Assumption \ref{ass} holds true and let $\left\lbrace \left( x^{k} , z^{k} , y^{k} \right) \right\rbrace _{k \geq 0}$ be a sequence generated by Algorithm \ref{algo:al.1} or Algorithm \ref{algo:al.2}, which is assumed to be bounded. If $\Fr$ is a \KL \ function, then the following statements are true: 
	\begin{enumerate}
\item[(i)] the sequence $\left\lbrace \left( x^{k}, z^{k}, y^{k} \right)\right\rbrace _{k \geq 0}$ has finite length, namely,
		\begin{equation}
		\label{conv:Cauchy}
		\mysum_{k \geq 0} \left\lVert x^{k+1} - x^{k} \right\rVert < + \infty , \qquad \mysum_{k \geq 0} \left\lVert z^{k+1} - z^{k} \right\rVert < + \infty , \qquad \mysum_{k \geq 0} \left\lVert y^{k+1} - y^{k} \right\rVert < + \infty;
		\end{equation}
\item[(ii)]  the sequence $\left\lbrace\left( x^{k}, z^{k}, y^{k} \right)\right\rbrace _{k \geq 0}$ converges to a KKT point of the optimization problem \eqref{intro:problem}.
	\end{enumerate}
\end{thm}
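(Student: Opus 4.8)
I would run the now-standard Kurdyka--\L ojasiewicz descent argument (cf.\ \cite{Attouch-Bolte-Svaiter,Bolte-Sabach-Teboulle}), resting on the three ingredients already available: the sufficient decrease estimate \eqref{limit:dec:Hk}, the subgradient bound of Corollary~\ref{coro:subd-Hk-bound}, and the structure of the cluster set $\Omega$ described in Lemma~\ref{lem:cluster-aHk} (namely $\Omega$ is compact, $\Fr\equiv\F_{*}$ on $\Omega$, and $\dist\big((x^{k},z^{k},y^{k},x^{k-1},y^{k-1}),\Omega\big)\to0$). First I would dispose of the trivial case: if $\E_{k_{0}}=0$ for some $k_{0}$, then $\{\E_{k}\}_{k\geq1}$, being nonnegative and nonincreasing, equals $0$ from $k_{0}$ on, so $\Fk$ is eventually constant; \eqref{limit:dec:Hk} then makes $\{x^{k}\}_{k\geq0}$ eventually stationary, whereupon Lemma~\ref{lem:estimate}(iii) and (i), whose right-hand sides then telescope or are dominated by summable quantities, make $\{y^{k}\}_{k\geq0}$ and $\{z^{k}\}_{k\geq0}$ of finite length as well. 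So from now on I assume $\E_{k}>0$ for every $k\geq1$.

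Since $\Fr$ is a \KL\ function and each point of $\Omega$ lies in $\dom\partial\Fr$ (at $(\widehat{x},\widehat{z},\widehat{y},\widehat{x},\widehat{y})$ with $(\widehat{x},\widehat{z},\widehat{y})\in\crit(\Lr)$ the two regularizing quadratics in $\Fr$ have vanishing gradient, so $\0\in\partial\Fr(\widehat{x},\widehat{z},\widehat{y},\widehat{x},\widehat{y})$), the uniformized \KL\ property of Lemma~\ref{lem:uniformized} provides $\varepsilon>0$, $\eta>0$ and $\varphi\in\Phi_{\eta}$. Because $\E_{k}\downarrow0$ and the distance in Lemma~\ref{lem:cluster-aHk}(ii) tends to $0$, there is $k_{1}\geq2$ such that for every $k\geq k_{1}$ both $0<\E_{k+1}<\eta$ and $\dist\big((x^{k+1},z^{k+1},y^{k+1},x^{k},y^{k}),\Omega\big)<\varepsilon$; evaluating the uniformized \KL\ inequality at $(x^{k+1},z^{k+1},y^{k+1},x^{k},y^{k})$ with the subgradient $D^{k+1}\in\partial\Fr(x^{k+1},z^{k+1},y^{k+1},x^{k},y^{k})$ of Lemma~\ref{lem:subd-Hk-bound} yields $\varphi'(\E_{k+1})^{-1}\leq\dist\big(\0,\partial\Fr(x^{k+1},z^{k+1},y^{k+1},x^{k},y^{k})\big)\leq\opnorm{D^{k+1}}$. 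Writing $\Delta_{k}:=\left\lVert x^{k+1}-x^{k}\right\rVert$ and $b_{k}:=\left\lVert A^{*}(y^{k}-y^{k-1})\right\rVert$, and combining the last estimate with the concavity of $\varphi$ and with \eqref{limit:dec:Hk} shifted by one index, I get
\begin{equation*}
\tfrac{\Cdecxx}{4}\,\Delta_{k+1}^{2}\;\leq\;\E_{k+1}-\E_{k+2}\;\leq\;\varphi'(\E_{k+1})^{-1}\big(\varphi(\E_{k+1})-\varphi(\E_{k+2})\big)\;\leq\;\opnorm{D^{k+1}}\,\big(\varphi(\E_{k+1})-\varphi(\E_{k+2})\big).
\end{equation*}
An application of $\sqrt{st}\leq\tfrac{\alpha}{2}s+\tfrac{1}{2\alpha}t$ with a parameter $\alpha>0$ yet to be fixed, followed by the bound of Corollary~\ref{coro:subd-Hk-bound} for $\opnorm{D^{k+1}}$, turns this into
\begin{align*}
\Delta_{k+1} & \;\leq\;\tfrac{\alpha}{2}\,\opnorm{D^{k+1}}+\tfrac{2}{\alpha\Cdecxx}\big(\varphi(\E_{k+1})-\varphi(\E_{k+2})\big)\\
& \;\leq\;\tfrac{\alpha\Csubx}{2}\big(\Delta_{k}+\Delta_{k-1}+\Delta_{k-2}\big)+\tfrac{\alpha}{2}\big(\Csuby(b_{k}-b_{k+1})+\Csubyy(b_{k-1}-b_{k})\big)+\tfrac{2}{\alpha\Cdecxx}\big(\varphi(\E_{k+1})-\varphi(\E_{k+2})\big)
\end{align*}
for every $k\geq k_{1}$.

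Next I would sum this inequality over $k=k_{1},\dots,N$: the $\varphi$-differences telescope (bounded by $\varphi(\E_{k_{1}+1})$), the $b$-differences telescope (bounded by $b_{k_{1}}$ and $b_{k_{1}-1}$, the remaining terms being nonnegative), and $\sum_{k=k_{1}}^{N}(\Delta_{k}+\Delta_{k-1}+\Delta_{k-2})\leq3\sum_{j=k_{1}-2}^{k_{1}}\Delta_{j}+3\sum_{j=k_{1}+1}^{N+1}\Delta_{j}$. Choosing $\alpha$ so small that $\tfrac{3\alpha\Csubx}{2}\leq\tfrac12$, the tail $\sum_{j=k_{1}+1}^{N+1}\Delta_{j}$ appearing on the right can be absorbed into the left-hand side, leaving $\sum_{j=k_{1}+1}^{N+1}\Delta_{j}$ bounded by a constant independent of $N$; letting $N\to\infty$ gives $\sum_{k\geq0}\left\lVert x^{k+1}-x^{k}\right\rVert<+\infty$. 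Plugging this back into Lemma~\ref{lem:estimate}(iii), whose $A^{*}$-increment term telescopes, gives $\sum_{k\geq0}\left\lVert y^{k+1}-y^{k}\right\rVert<+\infty$, and then Lemma~\ref{lem:estimate}(i) gives $\sum_{k\geq0}\left\lVert z^{k+1}-z^{k}\right\rVert<+\infty$; this is (i). For (ii), finite length makes $\{(x^{k},z^{k},y^{k})\}_{k\geq0}$ a Cauchy sequence, hence convergent to some $(x^{*},z^{*},y^{*})$, which is then one of its cluster points; by Lemma~\ref{lem:cluster-aLr}(ii) it lies in $\crit(\Lr)$, i.e.\ $-A^{*}y^{*}=\nabla h(x^{*})$, $y^{*}\in\partial g(z^{*})$, $z^{*}=Ax^{*}$, which is precisely a KKT point of \eqref{intro:problem}.

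I expect the main obstacle to be the absorption step. Since Corollary~\ref{coro:subd-Hk-bound} controls $\opnorm{D^{k+1}}$ only through the \emph{shifted} increments $\Delta_{k},\Delta_{k-1},\Delta_{k-2}$ (plus telescoping increments of the quantities $b_{k}$), summation produces $\sum_{j}\Delta_{j}$ on both sides of the inequality, and one must exploit the freedom in the Young parameter $\alpha$, together with the telescoping of the $b_{k}$- and $\varphi$-terms and a careful index bookkeeping, to make the coefficient on the right strictly below $1$ and thereby close the estimate. Everything else is routine assembly of results already established.
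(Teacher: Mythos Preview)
Your proof is correct and follows essentially the same route as the paper: dispose of the trivial case $\E_{k_{0}}=0$, invoke the uniformized \KL\ property on the compact set $\Omega$, combine the sufficient-decrease estimate \eqref{limit:dec:Hk} with the concavity of $\varphi$ and the subgradient bound of Corollary~\ref{coro:subd-Hk-bound}, and then absorb the shifted increments to obtain summability of $\{\|x^{k+1}-x^k\|\}$, finally propagating to $y$ and $z$ via Lemma~\ref{lem:estimate}. The only cosmetic differences are an index shift (the paper applies the \KL\ inequality at $(x^{k},z^{k},y^{k},x^{k-1},y^{k-1})$ rather than at $(x^{k+1},z^{k+1},y^{k+1},x^{k},y^{k})$) and that the paper delegates your hand-done absorption/summation step to the abstract Lemma~\ref{lem:conv-ext}.
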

\begin{proof}
As in Lemma \ref{lem:cluster-aHk}, we denote by $\Omega := \omega \left( \left\lbrace \left( x^{k} , z^{k} , y^{k} , x^{k-1} , y^{k-1} \right) \right\rbrace _{k \geq 1} \right)$, which is a nonempty set. Let be $\left( \widehat{x} , \widehat{z} , \widehat{y} , \widehat{x} , \widehat{y} \right) \in \Omega$, thus $\Fr \left( \widehat{x} , \widehat{z} , \widehat{y} , \widehat{x} , \widehat{y} \right) = \F_{*}$. We have seen that $\left\lbrace \E_{k} = \F_k - \F^* \right\rbrace _{k \geq 1}$ converges to $0$ as $k \to +\infty$ and will consider, consequently, two cases. 

We assume first that there exists an integer $k' \geq 0$ such that $\E_{k'} = 0$ or, equivalently, $\F_{k'} = \F_{*}$. Due to the monotonicity of $\left\lbrace \E_{k} \right\rbrace _{k \geq 1}$ it follows that  $\E_{k} = 0$ or, equivalently, $\F_{k} = \F_{*}$ for all $k \geq k'$. Combining the inequality in \eqref{limit:dec:Hk} with Lemma \ref{lem:estimate}, it yields that $x^{k+1}-x^k=0$ for all $k \geq k'+1$. Using Lemma \ref{lem:estimate}  (iii) and telescoping sum arguments, it yields $\mysum\nolimits_{k \geq 0} \left\lVert y^{k+1} - y^{k} \right\rVert < + \infty$. Finally, by using Lemma \ref{lem:estimate}  (i), we obtain that
$\mysum\nolimits_{k \geq 0} \left\lVert z^{k+1} - z^{k} \right\rVert < + \infty$.
	
	Consider now the case when $\E_{k} > 0$ or, equivalently, $\F_{k} > \F_{*}$ for every $k \geq 1$. According to Lemma \ref{lem:uniformized}, there exist $\varepsilon > 0$, $\eta > 0$ and a desingularization function $\varphi$ such that for every element $u$ in the intersection
	\begin{align}
	\label{conv:intersection}
	\left\lbrace  u \in  \sR^{n} \times \sR^{m} \times \sR^{m} \times \sR^{n} \times \sR^{m}  \colon \dist \left( u, \Omega \right) < \varepsilon \right\rbrace \ & \cap \nonumber \\
\left\lbrace u \in  \sR^{n} \times \sR^{m} \times \sR^{m} \times \sR^{n} \times \sR^{m} \colon \F_{*} < \Fr \left( u \right) < \F_{*} + \eta \right\rbrace  \ &
	\end{align}
	it holds
	\begin{equation*}
	\varphi' \left( \Fr \left( u \right) - \F_{*} \right) \cdot \dist \left( \0 , \partial \Fr \left( u \right) \right) \geq 1.
	\end{equation*}
	Let be $k_{1} \geq 1$ such that for every $k \geq k_{1}$
	\begin{equation*}
	\F_{*} < \F_{k} < \F_{*} + \eta .
	\end{equation*}	
	Since $\lim\limits_{k \to +\infty} \dist \left[ \left( x^{k} , z^{k} , y^{k} , x^{k-1} , y^{k-1} \right), \Omega \right] = 0$, see Lemma \ref{lem:cluster-aHk} (ii), there exists $k_{2} \geq 1$ such that for every $k \geq k_{2}$
	\begin{equation*}
	\dist \left[ \left( x^{k} , z^{k} , y^{k} , x^{k-1} , y^{k-1} \right), \Omega \right] < \varepsilon .
	\end{equation*}	
	Thus, $\left( x^{k} , z^{k} , y^{k} , x^{k-1} , y^{k-1} \right)$ belongs to the intersection in \eqref{conv:intersection} for every $k \geq k_{0} := \max \left\lbrace k_{1} , k_{2} , 3 \right\rbrace$, which further implies
	\begin{equation}
	\label{conv:KL-property}
	\varphi ' \left( \F_{k} - \F_{*} \right) \cdot \dist \left( \0 ,  \partial \Fr \left( x^{k} , z^{k} , y^{k} , x^{k-1} , y^{k-1} \right) \right) = \varphi ' \left( \E_{k} \right) \cdot \dist \left( \0 ,  \partial \Fr \left( x^{k} , z^{k} , y^{k} , x^{k-1} , y^{k-1} \right) \right) \geq 1 .
	\end{equation}
	
	Define for two arbitrary nonnegative integers $p$ and $q$
	\begin{equation*}
	\Delta_{p,q} := \varphi \left(\F_p - \F_* \right) - \varphi \left(\F_q - \F_* \right) = \varphi \left( \E_{p} \right) - \varphi \left( \E_{q} \right) .
	\end{equation*}	
For every $K \geq k_{0} \geq 1$ it holds
	\begin{equation*}
	\mysum_{k = k_{0}}^{K} \Delta_{k,k+1} = \Delta_{k_{0},K+1} = \varphi \left(\E_{k_{0}} \right) - \varphi \left(\E_{K+1} \right) \leq  \varphi \left(\E_{k_{0}} \right),
	\end{equation*}
	from which we get $\mysum_{k \geq 1} \Delta_{k,k+1} < + \infty$.
	
By combining Theorem \ref{thm:limit} (i) with the concavity of $\varphi$ we obtain for every $k \geq 1$
	\begin{equation}
	\label{conv:concave}
	\Delta_{k,k+1} = \varphi \left(\E_{k} \right) - \varphi \left(\E_{k+1} \right) \geq \varphi ' \left(\E_{k} \right) \left[ \E_{k} - \E_{k+1} \right]  = \varphi ' \left(\E_{k} \right) \left[\F_{k} - \F_{k+1} \right]  \geq  \varphi ' \left( \E_{k} \right)  \dfrac{\Cdecxx}{4} \left\lVert x^{k+1} - x^{k} \right\rVert ^{2}.
	\end{equation}
	The last relation combined with \eqref{conv:KL-property} imply 
	\begin{align*}
	\left\lVert x^{k+1} - x^{k} \right\rVert ^{2} & \leq \varphi ' \left( \E_{k} \right) \cdot \dist \left( \0 ,  \partial \Fr \left( x^{k} , z^{k} , y^{k} , x^{k-1} , y^{k-1} \right) \right) \left\lVert x^{k+1} - x^{k} \right\rVert ^{2}\\
 & \leq \dfrac{4}{\Cdecxx} \Delta_{k,k+1} \cdot \dist \left( \0 , \partial \Fr \left( x^{k} , z^{k} , y^{k} , x^{k-1} , y^{k-1} \right) \right)  \ \forall{k \geq k_0}.
	\end{align*}
	
By the arithmetic mean-geometric mean inequality and Corollary \ref{coro:subd-Hk-bound} we have that for every $k \geq k_{0}$ and every $\beta >0$
	\begin{align}
	\label{conv:inq}
	\left\lVert x^{k+1} - x^{k} \right\rVert & \leq \sqrt{\dfrac{4}{\Cdecxx} \Delta_{k,k+1} \cdot \dist \left( \0 ,  \partial \Fr \left( x^{k} , z^{k} , y^{k} , x^{k-1} , y^{k-1} \right)\right)} \nonumber \\
	& \leq \dfrac{\beta}{\Cdecxx} \Delta_{k,k+1} + \dfrac{1}{\beta} \dist \left( \0 ,  \partial \Fr \left( x^{k} , z^{k} , y^{k} , x^{k-1} , y^{k-1} \right)\right) \nonumber \\
	& \leq \dfrac{\beta}{\Cdecxx} \Delta_{k,k+1} + \dfrac{\Csubx}{\beta} \left( \left\lVert x^{k} - x^{k-1} \right\rVert + \left\lVert x^{k-1} - x^{k-2} \right\rVert + \left\lVert x^{k-2} - x^{k-3} \right\rVert \right) \nonumber \\
	& \quad + \dfrac{\Csuby}{\beta} \left( \left\lVert A^{*} \left( y^{k-1} - y^{k-2} \right) \right\rVert - \left\lVert A^{*} \left( y^{k} - y^{k-1} \right) \right\rVert \right) \nonumber \\
	& \quad + \dfrac{\Csubyy}{\beta} \left( \left\lVert A^{*} \left( y^{k-2} - y^{k-3} \right) \right\rVert - \left\lVert A^{*} \left( y^{k-1} - y^{k-2} \right) \right\rVert \right).
	\end{align}
 We denote for every $k \geq 3$
	\begin{align*}
	\begin{split}
	a^{k} 		& := \left\lVert x^{k} - x^{k-1} \right\rVert \geq 0 , \\
	\delta_{k} 	& := \dfrac{\beta}{\Cdecxx} \Delta_{k,k+1} + \dfrac{\Csuby}{\beta} \left( \left\lVert A^{*} \left( y^{k-1} - y^{k-2} \right) \right\rVert - \left\lVert A^{*} \left( y^{k} - y^{k-1} \right) \right\rVert \right) \\
	& \quad + \dfrac{\Csubyy}{\beta} \left( \left\lVert A^{*} \left( y^{k-2} - y^{k-3} \right) \right\rVert - \left\lVert A^{*} \left( y^{k-1} - y^{k-2} \right) \right\rVert \right).
	\end{split}
	\end{align*}
	The inequality \eqref{conv:inq} is nothing than \eqref{sum:hypo} with $c_{0} = c_{1} = c_{2} := \dfrac{\Csubx}{\beta}$.	Observe that for every $K \geq k_{0}$ we have
	\begin{equation*}
	\mysum_{k = k_{0}}^{K} \delta_{k} \leq \dfrac{\beta}{\Cdecxx} \varphi \left(\E_{k_{0}} \right) + \dfrac{\Csuby}{\beta} \left\lVert A^{*} \left( y^{k_{0}-1} - y^{k_{0}-2} \right) \right\rVert + \dfrac{\Csubyy}{\beta} \left\lVert A^{*} \left( y^{k_{0}-2} - y^{k_{0}-3} \right) \right\rVert
	\end{equation*}
	and thus, by choosing $\beta > 3 \Csubx$, we can use Lemma \ref{lem:conv-ext} to conclude that
	\begin{equation*}
	\mysum_{k \geq 0} \left\lVert x^{k+1} - x^{k} \right\rVert < + \infty .
	\end{equation*}
	The other two statements in \eqref{conv:Cauchy} follow from Lemma \ref{lem:estimate}. This means that the sequence $\left\lbrace \left( x^{k}, z^{k}, y^{k} \right) \right\rbrace _{k \geq 0}$ is Cauchy, thus it converges to an element $\left( \widehat{x} , \widehat{z} , \widehat{y} \right)$ which is, according to Lemmas \ref{lem:cluster-aLr}, a KKT point of the optimization problem \eqref{intro:problem}.
\end{proof}

\begin{rmk}
	The function $\Fr$ is a \KL \ function if, for instance, the objective function of \eqref{intro:problem} is semi-algebraic, which is the case when the functions $g$ and $h$ are semi-algebraic.
\end{rmk}

\section{Convergence rates under \Loja assumptions}
\label{sec:rates}

In this section we derive convergence rates for the sequence $\left\lbrace \left( x^{k}, z^{k}, y^{k} \right) \right\rbrace _{k \geq 0}$ generated by Algorithm \ref{algo:al.1} or Algorithm \ref{algo:al.2} as well as for the regularized augmented Lagrangian function $\Fr$ along this sequence, provided that the latter satisfies the \Loja property.

\subsection{\Loja property and a technical lemma}

We recall the following definition from \cite{Attouch-Bolte} (see, also, \cite{Lojasiewicz}).
\begin{defi}
	\label{defi:Lojasiewicz}
	Let $\Psi \colon \sR^{N} \to \sR \cup \left\lbrace + \infty \right\rbrace$ be proper and lower semicontinuous. Then $\Psi$ satisfies the \Loja property if for any critical point $\widehat{u}$ of $\Psi$, there exists $C_{L}>0$, $\theta \in \left[ 0 , 1 \right)$ and $\varepsilon > 0$ such that
	\begin{equation*}
	\left\lvert \Psi \left( u \right) - \Psi \left( \widehat{u} \right) \right\rvert ^{\theta} \leq C_{L} \cdot \dist(0,\partial \Psi(u)) \ \forall u \in \Ball \left( \widehat{u} , \varepsilon \right),
	\end{equation*}
	where $\Ball \left( \widehat{u}, \varepsilon \right)$ denotes the open ball with centre $\widehat{u}$ and radius $\varepsilon$.
\end{defi}

If Assumption \ref{ass} is fulfilled and $\left\lbrace \left( x^{k}, z^{k} , y^{k} \right) \right\rbrace _{k \geq 0}$ is the sequence generated by Algorithm \ref{algo:al.1} or Algorithm \ref{algo:al.2},  assumed to be bounded, then, as seen in Lemma \ref{lem:cluster-aHk}, the set of cluster points $\Omega = \omega \left( \left\lbrace \left( x^{k} , z^{k} , y^{k} , x^{k-1} , y^{k-1} \right) \right\rbrace _{k \geq 0} \right)$ is nonempty, compact and connected and $\Fr$ takes on $\Omega$ the value $\F_{*}$; in addition, for every $\left( \widehat{x} , \widehat{z} , \widehat{y} , \widehat{x} , \widehat{y} \right) \in \Omega$, $\left( \widehat{x} , \widehat{z} , \widehat{y} \right)$ belongs to $\crit \left( \Lr \right)$. According to \cite[Lemma 1]{Attouch-Bolte}, if $\Fr$ has the \Loja property, then there exist $C_{L} > 0$, $\theta \in \left[ 0 , 1 \right)$ and  $\varepsilon > 0$ such that for every
\begin{equation*}
\left( x , z , y , x' , y' \right) \in \left\lbrace u \in \sR^{n} \times \sR^m \times \sR^m  \! \colon \! \dist \left( u , \Omega \right) < \varepsilon \right\rbrace
\end{equation*}
it holds
\begin{equation*}
\left\lvert \Fr \left( x , z , y , x' , y' \right) - \F_{*} \right\rvert ^{\theta} \leq C_{L} \cdot \dist \left( 0 , \partial \Fr \left( x , z , y , x' , y' \right) \right).
\end{equation*}
Obviously, $\Fr$ is a \KL \ function with desingularization function $\varphi : [0,+\infty) \to [0,+\infty)$, $\varphi \left( s \right) := \dfrac{1}{1 - \theta} C_{L} s^{1 - \theta}$, which, according to Theorem \ref{thm:conv}, means that $\Omega$ contains a single element $\left( \widehat{x} , \widehat{z} , \widehat{y} , \widehat{x} , \widehat{y} \right)$, namely, the limit of $\left\lbrace \left( x^{k} , z^{k} , y^{k} , x^{k-1} , y^{k-1} \right) \right\rbrace _{k \geq 0}$ as $k \to +\infty$. In other words, if $\Fr$ has the \Loja property, then there exist $C_{L} > 0$, $\theta \in \left[ 0 , 1 \right)$ and  $\varepsilon > 0$ such that
\begin{equation}
\label{Loja:uniform}
\left\lvert \Fr \left( x , z , y , x' , y' \right) - \F_{*} \right\rvert ^{\theta} \leq C_{L} \cdot \dist \left( 0 , \partial \Fr \left( x , z , y , x' , y' \right) \right) \ \forall \left( x , z , y , x' , y' \right) \in  \Ball \left( \left( \widehat{x} , \widehat{z} , \widehat{y} , \widehat{x} , \widehat{y} \right) , \varepsilon \right).
\end{equation}
In this case, $\Fr$ is said to satisfy the \Loja property with \emph{\Loja constant} $C_{L} > 0$ and \emph{\Loja exponent} $\theta \in \left[ 0 , 1 \right)$.

The following lemma will convergence rates for a particular class of monotonically decreasing sequences converging to $0$.
\begin{lem}
	\label{lem:rates}
	Let $\left\lbrace e_{k} \right\rbrace _{k \geq 0}$ be a monotonically decreasing sequence in $\sR_{+}$ converging $0$. Assume further that there exists natural numbers $k_{0} \geq l_0 \geq 1$ such that for every $k \geq k_{0}$
	\begin{equation}
	\label{rates:rec}
	e_{k - l_0} - e_{k} \geq C_{e} e_{k}^{2 \theta} ,
	\end{equation}
	where $C_{e} > 0$ is some constant and $\theta \in \left[ 0 , 1 \right)$. Then following statements are true:
	\begin{enumerate}
		\item[(i)] 
		\label{lem:rates:i}
		if $\theta = 0$, then $\left\lbrace e_{k} \right\rbrace _{k \geq 0}$ converges in finite time;
		
	\item[(ii)] 
		\label{lem:rates:ii}
		if $\theta \in \left( 0 , 1/2 \right]$, then there exists $C_{e,0} > 0$ and $Q \in \left[ 0 , 1 \right)$ such that for every $k \geq k_{0}$
		\begin{equation*}
		0 \leq e_{k} \leq C_{e,0} Q^{k};
		\end{equation*}
		
\item[(iii)] 
		\label{lem:rates:iii}
		if $\theta \in \left( 1/2 , 1 \right)$, then there exists $C_{e,1} > 0$ such that for every $k \geq k_{0} + l_0$
		\begin{equation*}
		0 \leq e_{k} \leq C_{e,1} \left( k - l_0 + 1 \right) ^{- \frac{1}{2 \theta - 1}}  .
		\end{equation*}
	\end{enumerate}
\end{lem}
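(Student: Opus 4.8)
The plan is to treat the three regimes of $\theta$ separately, since the value of $\theta$ is exactly what dictates the asymptotic behaviour forced by the recursion $e_{k-l_0}-e_k\ge C_e e_k^{2\theta}$. Throughout I may assume $e_k>0$ for all $k\ge 0$: if $e_{k'}=0$ for some $k'$, monotonicity gives $e_k=0$ for every $k\ge k'$ and all three assertions hold trivially (this is precisely the ``finite time'' conclusion of (i)). The only feature distinguishing this statement from the classical \Loja convergence-rate lemma is the lag $l_0\ge 1$ in place of $1$; I would absorb it by iterating the recursion along the arithmetic progression $k,\,k-l_0,\,k-2l_0,\dots$, stopping at the last index that is still $\ge k_0$. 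Since $k_0\ge l_0$, every index of the form $k-jl_0$ occurring in such a chain stays $\ge k_0-l_0\ge 0$, so it is legitimate.

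For (i), $\theta=0$: the hypothesis reads $e_{k-l_0}-e_k\ge C_e$ for every $k\ge k_0$, and summing it over $k=k_0+l_0,\dots,k_0+Nl_0$ telescopes to $e_{k_0}-e_{k_0+Nl_0}\ge NC_e$, whence $e_{k_0+Nl_0}<0$ as soon as $N>e_{k_0}/C_e$, contradicting $e_k\ge 0$; hence the sequence must vanish identically from some index on. For (ii), $\theta\in(0,1/2]$: since $e_k\to 0$, choose $k_1\ge k_0$ with $e_k\le 1$ for all $k\ge k_1$, so that $e_k^{2\theta}\ge e_k$ and the recursion gives $e_{k-l_0}\ge(1+C_e)e_k$, i.e. $e_k\le q\,e_{k-l_0}$ with $q:=(1+C_e)^{-1}\in(0,1)$. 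Iterating this along residue classes modulo $l_0$ yields $e_k\le q^{\lfloor (k-k_1)/l_0\rfloor}\max_{k_1\le m<k_1+l_0}e_m$ for $k\ge k_1$, which has the claimed form $C_{e,0}Q^k$ with $Q:=q^{1/l_0}\in(0,1)$; the finitely many indices $k_0\le k<k_1$ are swallowed by enlarging $C_{e,0}$.

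Case (iii), $\theta\in(1/2,1)$, is the only one needing real work. Put $\alpha:=2\theta\in(1,2)$ and $r:=2\theta-1\in(0,1)$, so that $r+1=\alpha$, and let $\psi(t):=t^{-r}$, which is positive, strictly decreasing and \emph{convex} on $(0,\infty)$; the goal is to show $\psi(e_k)$ grows at least linearly in $k$. The tangent-line inequality for the convex $\psi$ gives, for $k\ge k_0$,
\[ \psi(e_k)-\psi(e_{k-l_0})\ge \psi'(e_{k-l_0})\bigl(e_k-e_{k-l_0}\bigr)=r\,e_{k-l_0}^{-r-1}\bigl(e_{k-l_0}-e_k\bigr)\ge rC_e\,e_{k-l_0}^{-r-1}e_k^{\alpha}. \]
I then split at step $k$: if $e_{k-l_0}\le 2^{1/\alpha}e_k$, then using $-r-1<0$ and $-r-1+\alpha=0$ the right-hand side is at least $rC_e\,2^{-(r+1)/\alpha}=rC_e/2$; if instead $e_{k-l_0}>2^{1/\alpha}e_k$, then $\psi(e_k)>2^{r/\alpha}\psi(e_{k-l_0})$, so $\psi(e_k)-\psi(e_{k-l_0})>(2^{r/\alpha}-1)\psi(e_{k-l_0})\ge(2^{r/\alpha}-1)e_{k_0-l_0}^{-r}$, where the last step uses that $\psi$ is decreasing and $e_{k-l_0}\le e_{k_0-l_0}$. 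Hence $\psi(e_k)-\psi(e_{k-l_0})\ge\bar\nu$ for all $k\ge k_0$, with $\bar\nu:=\min\bigl\{rC_e/2,\ (2^{r/\alpha}-1)e_{k_0-l_0}^{-r}\bigr\}>0$. Telescoping this along the progression ending at $k$ and using $\psi\ge 0$ gives $\psi(e_k)\ge\bigl(\tfrac{k-k_0}{l_0}\bigr)\bar\nu$ for $k\ge k_0$, i.e. $e_k\le\bigl(\tfrac{\bar\nu}{l_0}(k-k_0)\bigr)^{-1/r}$. Finally, for $k\ge k_0+l_0$ one has $k-k_0\ge\tfrac{l_0}{k_0+l_0}(k-l_0+1)$ (because there $k_0\le\tfrac{k_0}{l_0}(k-k_0)$ and $-l_0+1\le 0$), which converts the bound into $e_k\le C_{e,1}(k-l_0+1)^{-1/(2\theta-1)}$ with $C_{e,1}:=\bigl((k_0+l_0)/\bar\nu\bigr)^{1/(2\theta-1)}$.

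The only genuine obstacle is the bookkeeping imposed by the lag $l_0$: one must ensure that every index at which the recursion is invoked in a telescoping chain is $\ge k_0$ (so that $k-l_0\ge k_0-l_0\ge 0$ is a valid index), and, in case (iii), that the two-case dichotomy is set up so that the ``small relative drop'' case produces a decrease of $\psi$ bounded below by a constant independent of $k$ — which is exactly what the identity $r+1=\alpha$ delivers. Everything else (the telescoping sums, the passage from $k-k_0$ to $k-l_0+1$, and the absorption of finitely many initial indices into the constants) is routine.
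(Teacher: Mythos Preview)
Your proof is correct and follows essentially the same route as the paper: the same case split on $\theta$, the same contradiction in (i), the same linear recursion $e_k\le q\,e_{k-l_0}$ in (ii), and in (iii) the same dichotomy $e_{k-l_0}\lessgtr 2^{1/\alpha}e_k$ leading to a uniform lower bound on the increments of $e_k^{1-2\theta}$ followed by telescoping. The only cosmetic differences are that in (ii) you bound $e_k\le 1$ eventually whereas the paper uses $e_k\le e_0$ directly, and in (iii) you obtain the increment bound via the tangent-line inequality for the convex function $t\mapsto t^{1-2\theta}$ while the paper uses the equivalent integral estimate $\int_{e_k}^{e_{k-l_0}} s^{-2\theta}\,ds$; both yield the same constant $rC_e/2$ in the small-ratio case.
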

\begin{proof}
	Fix an integer $k \geq k_{0}$. Since $k_{0} \geq l_0 \geq 0$, the recurrence inequality \eqref{rates:rec} is well defined for every $k \geq k_{0}$.
	\item[(i)] The case when $\theta = 0$. We assume that $e_{k} > 0$ for every $k \geq 0$. From \eqref{rates:rec} we get
	\begin{equation*}
	e_{k-l_0} - e_{k} \geq C_{e} > 0
	\end{equation*}
	for every $k \geq k_{0}$, which actually contradicts the fact that $\left\lbrace e_{k} \right\rbrace _{k \geq 0}$  converges to $0$ as $k \to + \infty$. Consequently, there exists $k' \geq 0$ such that $e_{k'} = 0$ for every $k \geq k'$ and thus the conclusion follows.
	
	For the proof of (ii) and (iii) we can assume that $e_{k} > 0$ for every $k \geq 0$. Otherwise, as $\left\lbrace e_{k} \right\rbrace _{k \geq 0}$ is monotonically decreasing and convergent to $0$, the sequence is constant beginning with a given index, which means that both statements are true.
	
	\item[(ii)] The case when $\theta \in \left( 0 , 1/2 \right]$. We have $e_{k} \leq e_{0}$, which leads to $e_{k-l_0} - e_{k} \geq C_{e} e_{k}^{2 \theta} \geq C_{e} e_{0}^{2 \theta - 1} e_{k}$ for every $k \geq k_0$. Therefore,
	
	\begin{align*}
	e_{k} 
	\leq \left( \dfrac{1}{C_{e} e_{0}^{2 \theta - 1} + 1} \right) ^{\frac{k}{l_0} - \frac{k_{0}}{l_0} - 1} e_{0} = e_{0} \left( C_{e} e_{0}^{2 \theta - 1} + 1 \right) ^{\frac{k_{0}}{l_0} + 1} \left( \dfrac{1}{\sqrt[l_0]{C_{e} e_{0}^{2 \theta - 1} + 1}} \right) ^{k}.
	\end{align*}
	
	\item[(iii)] The case when $\theta \in \left( 1/2 , 1 \right)$. From \eqref{rates:rec} we get
	\begin{equation}
	\label{rates:rec-new}
	C_{e} \leq \left(e_{k-l_0} - e_{k} \right) e_{k}^{-2 \theta} .
	\end{equation}
	Define $\zeta \colon \left( 0 , + \infty \right) \to \sR , \zeta(s) = s^{-2 \theta}$. We have that
	\begin{equation*}
	\dfrac{d}{ds} \left( \dfrac{1}{1 - 2 \theta} s^{1 - 2 \theta} \right) = s^{-2 \theta} = \zeta \left( s \right) \textrm{ and } \zeta ' \left( s \right) = -2 \theta s^{-2 \theta - 1} < 0  \ \forall s \in (0,+\infty).
	\end{equation*}
	Consequently, $\zeta \left(e_{k-l_0} \right) \leq \zeta \left( s \right)$ for all $s \in \left[ e_{k} , e_{k-l_0} \right]$.		
	
	\begin{itemize}[leftmargin=\parindent]
		\item 
		\emph{Assume that $\zeta \left( e_{k} \right) \leq 2 \zeta \left( e_{k-l_0} \right)$}. Then \eqref{rates:rec-new} gives
		\begin{align*}
		C_{e} \leq  2 \zeta \left(e_{k-l_0} \right) \int_{e_{k}}^{e_{k-l_0}} 1 ds \leq 2 \int_{e_{k}}^{e_{k-l_0}} \zeta \left( s \right) ds = \dfrac{2}{2 \theta - 1} \left(e_{k}^{1 - 2 \theta} - e_{k-l_0}^{1 - 2 \theta} \right)
		\end{align*}
		or, equivalently,
		\begin{equation}
		\label{rates:const:lowerbound:1}
		e_{k}^{1 - 2 \theta} - e_{k-l_0}^{1 - 2 \theta} \geq C_{1}', \ \textrm{ where } C_{1}' := \dfrac{\left( 2 \theta - 1 \right) C_{e}}{2} > 0 .
		\end{equation}
		
		\item
		\emph{Assume that $\zeta \left( e_{k} \right) > 2 \zeta \left( e_{k-l_0} \right)$}. For $\nu := 2 ^{-\frac{1}{2 \theta}} \in \left( 0 , 1 \right)$ this is equivalent to $\left( \nu^{1 - 2 \theta} - 1 \right) e_{k-l_0}^{1 - 2 \theta} \leq e_{k}^{1 - 2 \theta} - e_{k-l_0}^{1 - 2 \theta}$, thus,
\begin{equation}
		\label{rates:const:lowerbound:2}
		e_{k}^{1 - 2 \theta} - e_{k-l_0}^{1 - 2 \theta} \geq \left( \nu^{1 - 2 \theta} - 1 \right) e_{k-l_0}^{1 - 2 \theta} \geq C_{2}', \ \textrm{ where } C_{2}' := \left( \nu^{1 - 2 \theta} - 1 \right) e_{0}^{2 \theta - 1} > 0 .
		\end{equation}
	\end{itemize}
	In both situations we get for every $i \geq k_{0}$
	\begin{equation}
	\label{rates:const:lowerbound}
	e_{i}^{1 - 2 \theta} - e_{i-l_0}^{1 - 2 \theta} \geq C' := \min \left\lbrace C_{1}' , C_{2}' \right\rbrace > 0 ,
	\end{equation}
	where $C_{1}'$ and $C_{2}'$ are defined as in \eqref{rates:const:lowerbound:1} and \eqref{rates:const:lowerbound:2}, respectively. For every $k \geq k_{0} + 2l_0$, by summing up the inequalities \eqref{rates:const:lowerbound} for $i = k_{0} + l_0 , \cdots , k$, we get
	\begin{equation*}
	\mysum_{j=0}^{l_0-1} \left( e_{k-j}^{1 - 2 \theta} - e_{k_{0} + j}^{1 - 2 \theta} \right) \geq \left( k - k_{0} - l_0 + 1 \right) C'> 0 .
	\end{equation*}
Since
	\begin{equation*}
	l_0 \left(e_{k}^{1 - 2 \theta} - e_{k_{0}}^{1 - 2 \theta} \right) \geq \mysum_{j=0}^{l_0-1} \left(e_{k-j}^{1 - 2 \theta} - e_{k_{0} + j}^{1 - 2 \theta} \right) \geq \left( k - k_{0} - l_0+ 1 \right)C',
	\end{equation*}
we have
	\begin{equation}
	\label{rates:pre}
	e_{k}^{1 - 2 \theta} \geq e_{k_{0}}^{1 - 2 \theta} + \dfrac{k - k_{0} - l_0 + 1}{l_0} C'. 
	\end{equation}
We obtain from \eqref{rates:const:lowerbound} that
	\begin{equation}
	\label{rates:indu}
	e_{k_{0}}^{1 - 2 \theta} \geq \left\lfloor \dfrac{k_{0}+l_0}{l_0} \right\rfloor C' \geq \left( \dfrac{k_{0}+l_0}{l_0} - 1 \right) C' = \dfrac{k_{0}}{l_0} C',
	\end{equation}
where $\left\lfloor p \right\rfloor$ denotes the greatest integer that is less than or equal to the real number $p$.
	By plugging \eqref{rates:indu} into \eqref{rates:pre} we obtain
	\begin{equation*}
	e_{k}^{1 - 2 \theta} \geq \dfrac{k - l_0 + 1}{l_0} C',
	\end{equation*}
	which implies
	\begin{equation}
	\label{rates:near}
	e_{k} \leq \left( \dfrac{C'}{l_0} \right) ^{- \frac{1}{2 \theta - 1}} \left( k - l_0+ 1 \right) ^{- \frac{1}{2 \theta - 1}} .
	\end{equation}
	This concludes the proof.
\end{proof}

\begin{rmk}
	The inequality in Lemma \ref{lem:rates} (iii) can be writen for $k$ large enough in terms of $k$ instead of $k-l_0+1$. If, for instance, $k \geq 2 \left( l_0 + 1 \right)$, then  $k-l_0+1 \geq \dfrac{1}{2} k$ and thus from \eqref{rates:near} we get
	\begin{equation*}
	e_{k} \leq \left( \dfrac{C'}{l_0} \right) ^{- \frac{1}{2 \theta - 1}} \left( k - l_0+ 1 \right) ^{- \frac{1}{2 \theta - 1}} \leq \left( \dfrac{C'}{2 l_0} \right) ^{- \frac{1}{2 \theta - 1}} k^{- \frac{1}{2 \theta - 1}} .
	\end{equation*}
\end{rmk}

\subsection{Convergence rates} \label{subsec:non-relaxed}
In this subsection we will address convergence rates for Algorithm \ref{algo:al.1} and Algorithm \ref{algo:al.2} in the context of an assumption which is slightly more restricitve than Assumption \ref{ass}.

\begin{assume}
	\label{assume-0:decrease}	
	We work in the hypotheses of Assumption \ref{ass} except for \eqref{assume:vm:positive} which is replaced by
	\begin{equation}
	\label{assume-0:vm:positive}
	2 \M_{1}^{k} + r A^{*} A \succcurlyeq \left( L + \dfrac{C_{\M}'}{r} \right) \Id  \quad \forall k \geq 0,
	\end{equation}
\end{assume}
where
\begin{equation*}
C_{\M}' := \begin{cases}
\left( 10 \mu_{1} ^{2} + 8 \left( L + \mu_{1} \right) ^{2} \right) \Tdecx, & \mbox{for Algorithm} \ \ref{algo:al.1}, \\
\left( 8 \mu_{1} ^{2} + 10 \left( L + \mu_{1} \right) ^{2} \right) \Tdecx, & \mbox{for Algorithm} \ \ref{algo:al.2}.
\end{cases} 
\end{equation*}
The condition \eqref{assume-0:vm:positive} is nothing else than \eqref{assume:vm:positive} after replacing $C_{\M}$ by the bigger constant $C_{\M}'$. 

The examples in Remark \ref{rmk:para-choice} can be all adapted to the new setting and one can provide different settings which guarantee Assumption \ref{assume-0:decrease}. The scenarios which ensure Assumption \ref{assume-0:decrease} evidently satisfy Assumption \ref{ass}, too, therefore the results investigated in Section \ref{sec:main} remain valid in this setting. As follows we will provide improvements of the statements used in the convergence analysis which follow thanks to Assumption \ref{assume-0:decrease}.
\begin{lem}
	\label{lem:decrease-0}
Suppose that Assumption \ref{assume-0:decrease} holds true and let $\left\lbrace \left( x^{k} , z^{k} , y^{k} \right) \right\rbrace _{k \geq 0}$ be a sequence generated by Algorithm \ref{algo:al.1} or Algorithm \ref{algo:al.2}. Then for every $k \geq 1$ it holds
	\begin{align}
	\label{dec-0:inq}
	& \ \Lr \left( x^{k+1} , z^{k+1} , y^{k+1} \right) + 2 \Tdecy \left\lVert A^{*} \left( y^{k+1} - y^{k} \right) \right\rVert ^{2} + \dfrac{1}{2} \left\lVert x^{k+1} - x^{k} \right\rVert _{\M_{3}^{k}}^{2} + \dfrac{1}{2} \left\lVert z^{k+1} - z^{k} \right\rVert _{\M_{2}^{k}}^{2} \nonumber \\
	& \ + \dfrac{1}{\rho r} \left\lVert y^{k+1} - y^{k} \right\rVert ^{2} \nonumber \\
	\leq & \ \Lr \left( x^{k} , z^{k} , y^{k} \right) + 2 \Tdecy \left\lVert A^{*} \left( y^{k} - y^{k-1} \right) \right\rVert ^{2} +\Cdecxx \left\lVert x^{k} - x^{k-1} \right\rVert^{2}.
	\end{align}
\end{lem}
\begin{proof}
Let $k \geq 1$ be fixed. By the same arguments as in Lemma \ref{lem:decrease}, we have that (see \eqref{dec:pre})
\begin{align}
\label{dec-0:pre}
& \Lr \left( x^{k+1} , z^{k+1} , y^{k+1} \right) + \dfrac{1}{2} \left\lVert x^{k+1} - x^{k} \right\rVert _{2 \M_{1}^{k} + r A^{*} A}^{2} - \dfrac{L}{2} \left\lVert x^{k+1} - x^{k} \right\rVert ^{2} + \dfrac{1}{2} \left\lVert z^{k+1} - z^{k} \right\rVert _{\M_{2}^{k}}^{2} \nonumber \\
\leq & \ \Lr \left( x^{k} , z^{k} , y^{k} \right) + \dfrac{1}{\rho r} \left\lVert y^{k+1} - y^{k} \right\rVert ^{2}.
\end{align}
From \eqref{dec:al.1:p1}, \eqref{dec:al.1:p2}, \eqref{dec:al.2:p1} and \eqref{dec:al.2:p2} it follows that
\begin{align}
\label{dec-0:mult-bound}
\dfrac{1}{\rho r} \left\lVert y^{k+1} - y^{k} \right\rVert ^{2} \leq & \dfrac{\Cdecx - L}{2} \left\lVert x^{k+1} - x^{k} \right\rVert ^{2} + \dfrac{\Cdecxx}{2} \left\lVert x^{k} - x^{k-1} \right\rVert ^{2} + \nonumber \\ 
& \Tdecy \left\lVert A^{*} \left( y^{k} - y^{k-1} \right) \right\rVert ^{2} - \Tdecy \left\lVert A^{*} \left( y^{k+1} - y^{k} \right) \right\rVert ^{2} .
\end{align}
By multiplying  \eqref{dec-0:mult-bound} by $2$ and by adding the resulting inequality to \eqref{dec-0:pre} we obtain \eqref{dec-0:inq}.
\end{proof}

We replace  $\Tdecy$ with $2 \Tdecy$ in the definition of the regularized augmented Lagrangian $\Fr$, 
thus, the sequence $\left\lbrace \F_{k} \right\rbrace _{k \geq 1}$ in \eqref{defi:Hk} becomes
\begin{align*}
\F_{k} :=  \Lr \left( x^{k} , z^{k} , y^{k} \right) + 2 \Tdecy \left\lVert A^{*} \left( y^{k} - y^{k-1} \right) \right\rVert ^{2} + \Cdecxx\left\lVert x^{k} - x^{k-1} \right\rVert^{2} \ \forall k \geq 1.
\end{align*}
In this new context the inequality \eqref{dec-0:inq} reads for every $k \geq 1$
\begin{equation}
\label{limit-0:dec:Hk}
\F_{k+1} + \dfrac{\Cdecxx}{4} \left\lVert x^{k+1} - x^{k} \right\rVert ^{2} + \dfrac{1}{2} \left\lVert z^{k+1} - z^{k} \right\rVert _{\M_{2}^{k}}^{2} + \dfrac{1}{\rho r} \left\lVert y^{k+1} - y^{k} \right\rVert ^{2} \leq \F_{k}
\end{equation}
and provides an inequality which is tighter than  relation \eqref{limit:dec:Hk} in Theorem \ref{thm:limit}. Furthermore, for a subgradient $D^{k+1}$ of $\F_r$ at $(x^{k+1}, z^{k+1}, y^{k+1}, x^{k}, z^{k})$ defined as in \eqref{aHk:subd} (again by replacing $\Tdecy$ by $2 \Tdecy$)  we obtain for every $k \geq 2$ the following estimate, which is simpler than \eqref{subd:inq} in Corollary \ref{coro:subd-Hk-bound}
\begin{equation*}
\label{aHk-0:inq}
\opnorm{D^{k+1}} \leq \CaHkbx \left\lVert x^{k+1} - x^{k} \right\rVert + \CaHkby \left\lVert y^{k+1} - y^{k} \right\rVert + \CaHkbyy \left\lVert y^{k} - y^{k-1} \right\rVert,
\end{equation*}
where 
\begin{equation*}
\label{aHk-0:const}
\CaHkbx := \CaHkx + \CaHkz \left\lVert A \right\rVert , \quad \CaHkby := \CaHky + \dfrac{\CaHkz}{\rho r} , \quad \CaHkbyy := \dfrac{\CaHkz}{\rho r} .
\end{equation*}

This improvement provides, instead of inequality \eqref{conv:concave}  in the proof of Theorem \ref{thm:conv}, the following very useful estimate
\begin{align*}
\Delta_{k,k+1} & = \varphi \left( \E_{k} \right) - \varphi \left( \E_{k+1} \right) \geq \varphi ' \left( \E_{k} \right) \min \left\lbrace \dfrac{\Cdecxx}{4} , \dfrac{1}{\rho r} \right\rbrace \left( \left\lVert x^{k+1} - x^{k} \right\rVert ^{2} + \left\lVert y^{k+1} - y^{k} \right\rVert ^{2} \right) \\
& \geq \Clower \varphi ' \left( \E_{k} \right) \left( \left\lVert x^{k+1} - x^{k} \right\rVert + \left\lVert y^{k+1} - y^{k} \right\rVert \right) ^{2},
\end{align*}
where
\begin{equation*}
\label{const:min}
\Clower := \dfrac{1}{2} \min \left\lbrace \dfrac{\Cdecxx}{4} , \dfrac{1}{\rho r} \right\rbrace .
\end{equation*}
The last relation together with \eqref{conv:KL-property} imply that for every $k \geq k_0$
\begin{align*}
\left( \left\lVert x^{k+1} - x^{k} \right\rVert + \left\lVert y^{k+1} - y^{k} \right\rVert \right) ^{2} 
\leq \dfrac{\Delta_{k,k+1}}{\Clower}  \cdot \dist \left( \0 ,  \partial \Fr \left( x^{k} , z^{k} , y^{k} , x^{k-1} , y^{k-1} \right) \right)
\end{align*}
and from here, for arbitrary $\beta >0$,
\begin{align}
\label{conv-0:inq}
& \left\lVert x^{k+1} - x^{k} \right\rVert + \left\lVert y^{k+1} - y^{k} \right\rVert \nonumber \\
\leq & \ \dfrac{\beta \Delta_{k,k+1}}{4 \Clower} + \dfrac{\max \left\lbrace \CaHkbx , \CaHkby \right\rbrace}{\beta} \left( \left\lVert x^{k} - x^{k-1} \right\rVert +  \left\lVert y^{k} - y^{k-1} \right\rVert + \left\lVert y^{k-1} - y^{k-2} \right\rVert \right).
\end{align}
By denoting
\begin{equation*}
\label{conv-0:defi}
a^{k}  := \left( \left\lVert x^{k} - x^{k-1} \right\rVert , \left\lVert y^{k} - y^{k-1} \right\rVert \right) \in \sR_{+}^{2} \quad \textrm{ and } \quad \delta_{k} := \dfrac{\beta \Delta_{k,k+1}}{4 \Clower},
\end{equation*}
inequality \eqref{conv-0:inq} can be rewritten for every $k \geq k_{0}$ as
\begin{equation}\label{conv-0:lem}
\left\langle \mathbbm{1} , a^{k+1} \right\rangle \leq \left\langle c_{0} , a^{k} \right\rangle + \left\langle c_{1} , a^{k-1} \right\rangle + \delta_{k} ,
\end{equation}
where
\begin{equation*}
c_{0} := \dfrac{\max \left\lbrace \CaHkbx , \CaHkby \right\rbrace}{\beta} \left( 1 , 1 \right)  \quad \textrm{ and } \quad c_{1} := \dfrac{\max \left\lbrace \CaHkbx , \CaHkby \right\rbrace}{\beta} \left( 0 , 1 \right) .
\end{equation*}
Choosing $\beta > 2 \max \left\lbrace \CaHkbx , \CaHkby \right\rbrace$, Lemma \ref{lem:conv-ext} and Lemma \ref{lem:estimate} imply that $\left\lbrace \left( x^{k} , z^{k} , y^{k} \right) \right\rbrace _{k \geq 0}$ has finite length (see \eqref{conv:Cauchy}).

Next we prove a recurrence inequality for the  sequence $\left\lbrace \E_{k} \right\rbrace _{k \geq 0}$.
\begin{lem}
	\label{lem:err.est-0}
Suppose that Assumption \ref{assume-0:decrease} holds true and let $\left\lbrace \left( x^{k} , z^{k} , y^{k} \right) \right\rbrace _{k \geq 0}$ be a sequence generated by Algorithm \ref{algo:al.1} or Algorithm \ref{algo:al.2}, which is assumed to be bounded. If $\Fr$ satisfies the \Loja property with \Loja constant $C_{L} > 0$ and \Loja exponent $\theta \in \left[ 0 , 1 \right)$, then there exists $k_{0} \geq 1$ such that the following estimate holds for every $k \geq k_{0}$ 
	\begin{equation}
	\label{err.est-0:rec}
	\E_{k-1} - \E_{k+1} \geq \Crecb \E_{k+1}^{2 \theta} , \quad \textrm{ where } \quad \Crecb := \dfrac{\min \left\lbrace \dfrac{\Cdecxx}{4} , \dfrac{1}{\rho r} \right\rbrace}{3 C_{L}^{2} \max \left\lbrace \CaHkbx , \CaHkby \right\rbrace ^{2}}.
	\end{equation}
\end{lem}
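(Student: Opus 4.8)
The plan is to chain together the sharpened descent inequality \eqref{limit-0:dec:Hk}, summed over two consecutive steps, with the \Loja inequality \eqref{Loja:uniform} fed by the subgradient estimate \eqref{aHk-0:inq}; the whole argument is a direct manipulation of inequalities already established in the excerpt.

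First I would fix $k_{0} \geq 2$ large enough that $\left( x^{k} , z^{k} , y^{k} , x^{k-1} , y^{k-1} \right) \in \Ball \left( \left( \widehat{x} , \widehat{z} , \widehat{y} , \widehat{x} , \widehat{y} \right) , \varepsilon \right)$ for all $k \geq k_{0} - 1$. This is legitimate: the \Loja property implies the \KL \ property, so the convergence statement of Theorem \ref{thm:conv} applies (it remains valid under Assumption \ref{assume-0:decrease}, since the finite-length argument was redone in the running text preceding this lemma), whence the whole sequence converges to the single cluster point $\left( \widehat{x} , \widehat{z} , \widehat{y} \right)$ and eventually enters the \Loja ball. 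If $\E_{k+1} = 0$ for some such $k$, then by monotonicity $\E_{j} = 0$ for all $j \geq k+1$ and \eqref{err.est-0:rec} holds trivially, so henceforth I may assume $\E_{k} > 0$ for all $k$.

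Next I would apply \eqref{Loja:uniform} at $\left( x^{k+1} , z^{k+1} , y^{k+1} , x^{k} , y^{k} \right)$, obtaining $\E_{k+1}^{\theta} \leq C_{L} \opnorm{D^{k+1}}$ for the subgradient $D^{k+1} \in \partial \Fr \left( x^{k+1} , z^{k+1} , y^{k+1} , x^{k} , y^{k} \right)$ of Lemma \ref{lem:subd-Hk-bound} (reparametrized by $2 \Tdecy$). Since $\CaHkbyy \leq \CaHkby$ and $\CaHkbx \leq \max \left\lbrace \CaHkbx , \CaHkby \right\rbrace$, estimate \eqref{aHk-0:inq} gives $\opnorm{D^{k+1}} \leq \max \left\lbrace \CaHkbx , \CaHkby \right\rbrace \left( \| x^{k+1} - x^{k} \| + \| y^{k+1} - y^{k} \| + \| y^{k} - y^{k-1} \| \right)$; squaring and using $(a+b+c)^{2} \leq 3(a^{2}+b^{2}+c^{2})$ yields
\[
\E_{k+1}^{2\theta} \leq 3 C_{L}^{2} \max \left\lbrace \CaHkbx , \CaHkby \right\rbrace ^{2} \left( \| x^{k+1} - x^{k} \|^{2} + \| y^{k+1} - y^{k} \|^{2} + \| y^{k} - y^{k-1} \|^{2} \right) .
\]

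Finally I would lower-bound the same three squared increments by energy drops: applying \eqref{limit-0:dec:Hk} at indices $k$ and $k-1$ gives $\E_{k} - \E_{k+1} \geq \min \left\lbrace \tfrac{\Cdecxx}{4} , \tfrac{1}{\rho r} \right\rbrace \left( \| x^{k+1} - x^{k} \|^{2} + \| y^{k+1} - y^{k} \|^{2} \right)$ and $\E_{k-1} - \E_{k} \geq \min \left\lbrace \tfrac{\Cdecxx}{4} , \tfrac{1}{\rho r} \right\rbrace \| y^{k} - y^{k-1} \|^{2}$; adding these and combining with the previous display produces exactly $\E_{k-1} - \E_{k+1} \geq \Crecb \E_{k+1}^{2\theta}$ with $\Crecb$ as stated. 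The only point requiring care is the first step — ensuring the iterates eventually lie in the \Loja neighbourhood and disposing of the degenerate case $\E_{k+1} = 0$ — since everything after that is a mechanical chaining of already-proved inequalities together with the elementary bound $(a+b+c)^{2} \leq 3(a^{2}+b^{2}+c^{2})$.
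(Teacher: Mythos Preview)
Your proposal is correct and follows essentially the same approach as the paper's proof: both chain the two-step descent inequality \eqref{limit-0:dec:Hk} with the subgradient bound \eqref{aHk-0:inq} (using $\CaHkbyy \leq \CaHkby$) and the \Loja inequality \eqref{Loja:uniform}, linked by the elementary estimate $(a+b+c)^{2} \leq 3(a^{2}+b^{2}+c^{2})$. The only difference is the order of presentation --- the paper first bounds $\E_{k-1}-\E_{k+1}$ from below by $\Crecb C_{L}^{2}\opnorm{D^{k+1}}^{2}$ and then invokes \Loja, whereas you start from \Loja and work toward the descent side --- but the logical content is identical.
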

\begin{proof}
For every $k \geq 2$ we obtain from \eqref{limit-0:dec:Hk}
	\begin{align*}
	\E_{k-1} - \E_{k+1} & = \F_{k-1} - \F_{k} + \F_{k} - \F_{k+1} \\
	& \geq \min \left\lbrace \dfrac{\Cdecxx}{4} , \dfrac{1}{\rho r} \right\rbrace \left( \left\lVert x^{k+1} - x^{k} \right\rVert ^{2} + \left\lVert y^{k+1} - y^{k} \right\rVert ^{2} + \left\lVert y^{k} - y^{k-1} \right\rVert ^{2} \right) \\
	& \geq \dfrac{1}{3} \min \left\lbrace \dfrac{\Cdecxx}{4} , \dfrac{1}{\rho r} \right\rbrace \left( \left\lVert x^{k+1} - x^{k} \right\rVert + \left\lVert y^{k+1} - y^{k} \right\rVert + \left\lVert y^{k} - y^{k-1} \right\rVert \right) ^{2} \\
	& \geq \Crecb C_{L}^{2} \opnorm{D^{k+1}}^{2}. 
	\end{align*}
	Let $\varepsilon > 0$ be such that \eqref{Loja:uniform} is fulfilled and choose $k_{0} \geq 1$ such that  $\left( x^{k+1}, z^{k+1}, y^{k+1} \right)$ belongs to $\Ball \left( \left( \widehat{x} , \widehat{z} , \widehat{y} \right) , \varepsilon \right)$ for every $k \geq k_{0}$. Then \eqref{Loja:uniform} implies \eqref{err.est-0:rec} for every $k \geq k_{0}$.
\end{proof}

The following convergence rates follow by combining  Lemma \ref{lem:rates} with Lemma \ref{lem:err.est-0}.
\begin{thm}
	\label{thm:rates-obj-0}
Suppose that Assumption \ref{assume-0:decrease} holds true and let $\left\lbrace \left( x^{k} , z^{k} , y^{k} \right) \right\rbrace _{k \geq 0}$ be a sequence generated by Algorithm \ref{algo:al.1} or Algorithm \ref{algo:al.2}, which is assumed to be bounded. If $\Fr$ satisfies the \Loja property with \Loja constant $C_{L} > 0$ and \Loja exponent $\theta \in \left[ 0 , 1 \right)$, then the following statements are true:
	\begin{enumerate}
		\item[(i)] if $\theta = 0$, then $\left\lbrace \F_{k} \right\rbrace _{k \geq 1}$ converges in finite time;
		
		\item[(ii)] if $\theta \in \left( 0 , 1/2 \right]$, then there exist $k_0 \geq 1$, $\widehat{C}_{0} > 0$ and $Q \in \left[ 0 , 1 \right)$ such that for every $k \geq k_{0}$
		\begin{equation*}
		0 \leq \F_{k} - \F_{*}  \leq \widehat{C}_{0} Q^{k};
		\end{equation*}
		
		\item[(iii)] if $\theta \in \left( 1/2 , 1 \right)$, then there exist $k_0 \geq 3$ and $\widehat{C}_{1} > 0$ such that for every $k \geq k_{0}$
		\begin{equation*}
		0 \leq \F_{k} - \F_{*}  \leq \widehat{C}_{1} \left( k - 1 \right) ^ {- \frac{1}{2 \theta - 1}}  .
		\end{equation*}
	\end{enumerate}
\end{thm}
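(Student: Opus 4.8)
The plan is to obtain the three regimes by a single application of Lemma \ref{lem:rates} to the sequence $e_k := \E_k = \F_k - \F_{*}$, $k \geq 1$. The preliminary observation is that Assumption \ref{assume-0:decrease} is strictly stronger than Assumption \ref{ass}, so all the results of Section \ref{sec:main} remain available; moreover, the redefinition of $\F_k$ with $2\Tdecy$ in place of $\Tdecy$ changes nothing essential, since \eqref{dec-0:inq} (equivalently \eqref{limit-0:dec:Hk}) still shows that the new sequence $\{\F_k\}_{k \geq 1}$ is monotonically decreasing, and Theorem \ref{thm:limit} together with Lemma \ref{lem:cluster-aHk} still give $\F_k \to \F_{*}$. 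Hence $\{\E_k\}_{k \geq 1}$ is a monotonically decreasing sequence in $\sR_+$ converging to $0$, which is exactly the standing hypothesis of Lemma \ref{lem:rates}.

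Next I would feed in Lemma \ref{lem:err.est-0}: because $\Fr$ has the \Loja property with exponent $\theta$, there is an index $k_0 \geq 1$ such that $\E_{k-1} - \E_{k+1} \geq \Crecb \E_{k+1}^{2\theta}$ for all $k \geq k_0$. Reindexing by $m := k+1$, this becomes $\E_{m-2} - \E_m \geq \Crecb \E_m^{2\theta}$ for all $m \geq k_0 + 1$; that is, the recurrence \eqref{rates:rec} holds with $l_0 := 2$, $C_e := \Crecb > 0$, the same exponent $\theta$, and starting index $k_0 + 1 \geq 2 = l_0$. Thus the hypotheses $k_0 \geq l_0 \geq 1$ of Lemma \ref{lem:rates} are met and the lemma applies verbatim to $\{e_k\}_{k\geq 1}$.

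The three statements are then read off directly. If $\theta = 0$, Lemma \ref{lem:rates}(i) says $\{\E_k\}$, and therefore $\{\F_k\}$, is eventually constant equal to $\F_{*}$, which is (i). If $\theta \in (0, 1/2]$, Lemma \ref{lem:rates}(ii) yields $C_{e,0} > 0$ and $Q \in [0,1)$ with $0 \leq \E_k \leq C_{e,0} Q^k$ for all $k$ past the starting index; setting $\widehat{C}_0 := C_{e,0}$ gives (ii). If $\theta \in (1/2, 1)$, Lemma \ref{lem:rates}(iii) yields $C_{e,1} > 0$ with $0 \leq \E_k \leq C_{e,1}(k - l_0 + 1)^{-1/(2\theta-1)} = C_{e,1}(k-1)^{-1/(2\theta-1)}$ for all $k \geq (k_0+1) + l_0 = k_0 + 3 \geq 3$; renaming this threshold as the new $k_0$ and setting $\widehat{C}_1 := C_{e,1}$ gives (iii).

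This argument is essentially bookkeeping, so I do not anticipate a genuine obstacle; the one point needing care is the reindexing that converts the two-step recurrence $\E_{k-1}-\E_{k+1}\geq\Crecb\E_{k+1}^{2\theta}$ of Lemma \ref{lem:err.est-0} into the one-parameter form $e_{k-l_0}-e_k\geq C_e e_k^{2\theta}$ of Lemma \ref{lem:rates} with $l_0=2$, and the verification that the starting indices respect the constraint $k_0 \geq l_0 \geq 1$ so that the recurrence is well defined. One should also state explicitly, before invoking everything, that the modified definition of $\F_k$ preserves monotonicity and the limit $\F_{*}$, so that $\{\E_k\}$ indeed satisfies the required global behaviour.
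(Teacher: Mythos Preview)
Your proposal is correct and follows exactly the paper's approach: the paper simply states that the result follows by combining Lemma \ref{lem:rates} with Lemma \ref{lem:err.est-0}, and you have spelled out precisely this combination, including the reindexing $m=k+1$ to put \eqref{err.est-0:rec} into the form \eqref{rates:rec} with $l_0=2$ and $C_e=\Crecb$. The care you take with the starting indices and with confirming that the modified $\{\F_k\}$ remains monotone and convergent to $\F_*$ is appropriate and matches the tacit assumptions in the paper.
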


The next lemma will play an importat role when transferring the convergence rates for $\left\lbrace \F_{k} \right\rbrace _{k \geq 0}$ to the sequence of iterates $\left\lbrace \left( x^{k} , z^{k} , y^{k} \right) \right\rbrace _{k \geq 0}$ (see \cite{Frankel-Garrigos-Peypouquet} for a similar statement).
\begin{lem}
	\label{lem:err.ite-0}
Suppose that Assumption \ref{assume-0:decrease} holds true and let $\left\lbrace \left( x^{k} , z^{k} , y^{k} \right) \right\rbrace _{k \geq 0}$ be a sequence generated by Algorithm \ref{algo:al.1} or Algorithm \ref{algo:al.2}, which is assumed to be bounded. Suppose further that $\Fr$ satisfies the \Loja property with \Loja constant $C_{L} > 0$, \Loja exponent $\theta \in \left[ 0 , 1 \right)$ and desingularization function $\varphi : [0,+\infty) \to [0,+\infty), \varphi \left( s \right) := \dfrac{1}{1 - \theta} C_{L} s^{1 - \theta}$. Let $\left( \widehat{x} , \widehat{z} , \widehat{y} \right)$ be the KKT point of the optimization problem \eqref{intro:problem} to which $\left\lbrace \left( x^{k}, z^{k}, y^{k} \right) \right\rbrace _{k \geq 0}$ converges as $k \to +\infty$. Then there exists $k_{0} \geq 2$ such that the following estimates hold for every $k \geq k_{0}$
	\begin{subequations}
		\label{err.ite-0:inq}
		\begin{align}
		\begin{split}
		\label{err.ite-0:inq:x}
		\left\lVert x^{k} - \widehat{x} \right\rVert \leq \Citebx \max \left\lbrace \sqrt{\E_{k}} , \varphi \left( \E_{k} \right) \right\rbrace , & \quad \textrm{ where } \quad \Citebx := \dfrac{7}{\sqrt{\Clower}} + \dfrac{1}{\Clower} ,
		\end{split}
		\\
		\begin{split}
		\label{err.ite-0:inq:y}
		\left\lVert y^{k} - \widehat{y} \right\rVert \leq \Citeby \max \left\lbrace \sqrt{\E_{k}} , \varphi \left( \E_{k} \right) \right\rbrace , & \quad \textrm{ where } \quad \Citeby := \dfrac{7}{2 \sqrt{\Clower}} + \dfrac{1}{2 \Clower} ,
		\end{split}
		\\
		\begin{split}
		\label{err.ite-0:inq:z}
		\left\lVert z^{k} - \widehat{z} \right\rVert \leq \Citebz \max \left\lbrace \sqrt{\E_{k-1}} , \varphi \left( \E_{k-1} \right) \right\rbrace , & \quad \textrm{ where } \quad \Citebz := \Citebx \left\lVert A \right\rVert + \dfrac{2 \Citeby}{\rho r}.
		\end{split}
		\end{align}
	\end{subequations}
\end{lem}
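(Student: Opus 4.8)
The plan is to express each of $\left\lVert x^{k}-\widehat{x}\right\rVert$, $\left\lVert y^{k}-\widehat{y}\right\rVert$, $\left\lVert z^{k}-\widehat{z}\right\rVert$ as the tail of a telescoping series of consecutive increments and to estimate those tails by combining, in the stronger setting of Assumption~\ref{assume-0:decrease}, the refined descent inequality \eqref{limit-0:dec:Hk}, the subgradient bound \eqref{aHk-0:inq}, and the \Loja inequality \eqref{Loja:uniform}. First I would recall from Theorem~\ref{thm:conv} that $\left\lbrace\left( x^{k},z^{k},y^{k}\right)\right\rbrace_{k\ge0}$ converges to the KKT point $\left(\widehat{x},\widehat{z},\widehat{y}\right)$ and has finite length \eqref{conv:Cauchy}; hence for every $k$
\[
\left\lVert x^{k}-\widehat{x}\right\rVert\le\mysum_{l\ge k}\left\lVert x^{l+1}-x^{l}\right\rVert,\qquad \left\lVert y^{k}-\widehat{y}\right\rVert\le\mysum_{l\ge k}\left\lVert y^{l+1}-y^{l}\right\rVert,
\]
and, together with \eqref{est:z-bounded-by-x},
\[
\left\lVert z^{k}-\widehat{z}\right\rVert\le\left\lVert A\right\rVert\mysum_{l\ge k}\left\lVert x^{l+1}-x^{l}\right\rVert+\dfrac{1}{\rho r}\mysum_{l\ge k}\left\lVert y^{l+1}-y^{l}\right\rVert+\dfrac{1}{\rho r}\mysum_{l\ge k-1}\left\lVert y^{l+1}-y^{l}\right\rVert .
\]
The shift by one index in the last sum is precisely the reason the estimate for $z^{k}$ will be phrased in terms of $\E_{k-1}$ rather than $\E_{k}$.

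Next I would estimate the tail sums $\Sigma_{m}:=\mysum_{l\ge m}\left(\left\lVert x^{l+1}-x^{l}\right\rVert+\left\lVert y^{l+1}-y^{l}\right\rVert\right)$, which are finite by \eqref{conv:Cauchy}. Summing the key inequality \eqref{conv-0:inq} over $l=m,\dots,K$, the terms $\Delta_{l,l+1}=\varphi(\E_{l})-\varphi(\E_{l+1})$ telescope to at most $\varphi(\E_{m})$, while each of $\mysum_{l=m}^{K}\left\lVert x^{l}-x^{l-1}\right\rVert$, $\mysum_{l=m}^{K}\left\lVert y^{l}-y^{l-1}\right\rVert$ and $\mysum_{l=m}^{K}\left\lVert y^{l-1}-y^{l-2}\right\rVert$ is a piece of $\Sigma_{m}$ plus a fixed number of boundary increments; choosing $\beta$ larger than a fixed multiple of $\max\left\lbrace\CaHkbx,\CaHkby\right\rbrace$ lets the $\Sigma$-contributions on the right be absorbed into the left, and letting $K\to+\infty$ gives, for every sufficiently large $m$,
\[
\Sigma_{m}\le c_{\varphi}\,\varphi(\E_{m})+c_{\Delta}\left(\left\lVert x^{m}-x^{m-1}\right\rVert+\left\lVert y^{m}-y^{m-1}\right\rVert+\left\lVert y^{m-1}-y^{m-2}\right\rVert\right)
\]
with explicit constants $c_{\varphi},c_{\Delta}$ depending only on $\Clower$, $\CaHkbx$, $\CaHkby$. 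The boundary increments are in turn controlled by $\sqrt{\E}$: from \eqref{limit-0:dec:Hk} and \eqref{const:min} one has $2\Clower\left(\left\lVert x^{l+1}-x^{l}\right\rVert+\left\lVert y^{l+1}-y^{l}\right\rVert\right)^{2}\le\F_{l}-\F_{l+1}=\E_{l}-\E_{l+1}\le\E_{l}$, hence $\left\lVert x^{l+1}-x^{l}\right\rVert+\left\lVert y^{l+1}-y^{l}\right\rVert\le\sqrt{\E_{l}/\Clower}$.

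To assemble the bounds in terms of $\E_{k}$ (and not of some earlier, larger error value) I would apply the estimate for $\Sigma_{m}$ at $m=k+2$ and add back the two leading increments, each $\le\sqrt{\E_{k}/\Clower}$ by monotonicity of $\left\lbrace\E_{k}\right\rbrace_{k\ge1}$, while $\varphi(\E_{k+2})\le\varphi(\E_{k})$ and the boundary increments occurring at $m=k+2$ sit at indices $\ge k$ and are again $\le\sqrt{\E_{k}/\Clower}$. Grouping the two families of terms via $\varphi(\E_{k}),\sqrt{\E_{k}}\le\max\left\lbrace\sqrt{\E_{k}},\varphi(\E_{k})\right\rbrace$ yields \eqref{err.ite-0:inq:x} and \eqref{err.ite-0:inq:y} with the constants $\Citebx,\Citeby$; feeding these, together with the corresponding $\sqrt{\E_{k-1}}$-bound for the shifted $y$-tail, into the inequality for $\left\lVert z^{k}-\widehat{z}\right\rVert$ produces \eqref{err.ite-0:inq:z} with $\Citebz=\Citebx\left\lVert A\right\rVert+2\Citeby/(\rho r)$. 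Throughout, $k_{0}\ge2$ must be chosen large enough that $\left( x^{l+1},z^{l+1},y^{l+1}\right)$ lies in the ball around $\left(\widehat{x},\widehat{z},\widehat{y}\right)$ on which \eqref{Loja:uniform} holds for all $l\ge k_{0}$, which is what underpins \eqref{conv-0:inq} in the first place.

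The main obstacle I anticipate is not a single inequality but the bookkeeping: carrying the constants through the telescoping-and-absorption step and, above all, arranging the index shifts so that the final estimates are expressed in $\E_{k}$ (respectively $\E_{k-1}$ for $z^{k}$) rather than in earlier values of the error sequence. This is possible only because the bound for $\Sigma_{m}$ is available for \emph{every} sufficiently large $m$, and because both $\left\lbrace\E_{k}\right\rbrace_{k\ge1}$ and the desingularization function $\varphi$ are monotone; once this is organised, the remaining computations are routine.
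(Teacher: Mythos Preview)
Your proposal is correct and follows essentially the same route as the paper: telescoping tails for $x,y,z$, summing the recursion \eqref{conv-0:inq} with $\beta$ chosen large enough to absorb the right-hand side into the left, controlling the leftover boundary increments via \eqref{limit-0:dec:Hk} as multiples of $\sqrt{\E_{k}}$, and then handling $z^{k}$ through \eqref{est:z-bounded-by-x}, which forces the shift to $\E_{k-1}$. The only cosmetic difference is that the paper packages the telescoping-and-absorption step by directly invoking the explicit bound \eqref{sum:upper} of Lemma~\ref{lem:conv-ext} with the concrete choice $\beta=3\max\{\CaHkbx,\CaHkby\}$, which immediately delivers the tail sums as combinations of three consecutive increments plus $\varphi(\E_{k})/\Clower$, whereas you describe carrying out that summation by hand; the resulting constants and index shifts are the same.
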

\begin{proof}
We assume that $\E_{k} > 0$ for every $k \geq 0$. Otherwise, beginning with a given index, the sequence $\left\lbrace \left( x^{k} , z^{k} , y^{k} \right) \right\rbrace _{k \geq 0}$ becomes identical to $\left( \widehat{x} , \widehat{z} , \widehat{y} \right)$  and the conclusion follows as in the proof of Theorem \ref{thm:conv}. Let $\varepsilon > 0$ be such that \eqref{Loja:uniform} is fulfilled and $k_{0} \geq 2$ such that  $\left( x^{k+1}, z^{k+1}, y^{k+1} \right)$ belongs to $\Ball \left( \left( \widehat{x} , \widehat{z} , \widehat{y} \right) , \varepsilon \right)$ for every $k \geq k_{0}$. We fix $k \geq k_{0}$. One can easily notice that
	\begin{subequations}
		\label{err.ite-0:crit.bounded}
		\begin{equation}
		\label{err.ite-0:crit.bounded:x}
		\left\lVert x^{k} - \widehat{x} \right\rVert \leq \left\lVert x^{k+1} - x^{k} \right\rVert + \left\lVert x^{k+1} - \widehat{x} \right\rVert \leq \cdots \leq \mysum_{l \geq k} \left\lVert x^{l+1} - x^{l} \right\rVert
		\end{equation}
and, similarly,
		\begin{equation}
		\label{err.ite-0:crit.bounded:zy}
		\left\lVert z^{k} - \widehat{z} \right\rVert \leq \mysum_{l \geq k} \left\lVert z^{l+1} - z^{l} \right\rVert \ \mbox{and} \ \left\lVert y^{k} - \widehat{y} \right\rVert \leq \mysum_{l \geq k} \left\lVert y^{l+1} - y^{l} \right\rVert .
		\end{equation}
	\end{subequations}
Recall that  the inequality \eqref{conv-0:inq} can be rewritten as \eqref{conv-0:lem}. For $\beta := 3 \max \left\lbrace \CaHkbx , \CaHkby \right\rbrace > 2 \max \left\lbrace \CaHkbx , \CaHkby \right\rbrace$, thanks to Lemma \ref{lem:conv-ext} and the estimate \eqref{limit-0:dec:Hk}, we have that
	\begin{align*}
	\begin{split}
	& \mysum_{l \geq k} \left\lVert x^{l+1} - x^{l} \right\rVert = \mysum_{l \geq k} a_{1}^{l+1} = \mysum_{l \geq k+1} a_{1}^{l} \\
	\leq & \ \left\lVert x^{k+1} - x^{k} \right\rVert + 2 \left\lVert x^{k+2} - x^{k+1} \right\rVert + 3 \left\lVert x^{k+3} - x^{k+2} \right\rVert + 2 \left\lVert y^{k+1} - y^{k} \right\rVert \\
	& + 2 \left\lVert y^{k+2} - y^{k+1} \right\rVert + 3 \left\lVert y^{k+3} - y^{k+2} \right\rVert + \dfrac{\varphi \left( \E_{k} \right)}{\Clower} \\
	\leq & \ \dfrac{2}{\sqrt{\Clower}} \sqrt{\F_{k} - \F_{k+1}} + \dfrac{2}{\sqrt{\Clower}} \sqrt{\F_{k+1} - \F_{k+2}} + \dfrac{3}{\sqrt{\Clower}} \sqrt{\F_{k+2} - \F_{k+3}} + \dfrac{\varphi \left( \E_{k} \right)}{\Clower} \\
	\leq & \ \dfrac{2}{\sqrt{\Clower}} \sqrt{\E_{k}} + \dfrac{2}{\sqrt{\Clower}} \sqrt{\E_{k+1}} + \dfrac{3}{\sqrt{\Clower}} \sqrt{\E_{k+2}} + \dfrac{\varphi \left( \E_{k} \right)}{\Clower}
	\end{split}	
	\end{align*}
	and, similarly,
	\begin{equation*}
	\mysum_{l \geq k} \left\lVert y^{l+1} - y^{l} \right\rVert \leq \dfrac{1}{\sqrt{\Clower}} \sqrt{\E_{k}} + \dfrac{1}{\sqrt{\Clower}} \sqrt{\E_{k+1}} + \dfrac{3}{2 \sqrt{\Clower}} \sqrt{\E_{k+2}} + \dfrac{\varphi \left( \E_{k} \right)}{2 \Clower} .
	\end{equation*}
	By taking into account the relations above, \eqref{err.ite-0:crit.bounded:x}-\eqref{err.ite-0:crit.bounded:zy} as well as
	\begin{equation*}
	\sqrt{\E_{k+2}} \leq \sqrt{\E_{k+1}} \leq \sqrt{\E_{k}} \qquad \textrm{ and } \qquad \varphi \left( \E_{k+1} \right) \leq \varphi \left( \E_{k} \right) \ \forall k \geq 1,
	\end{equation*}
the estimates \eqref{err.ite-0:inq:x} and \eqref{err.ite-0:inq:y} follow. Statement \eqref{err.ite-0:inq:z} follows from Lemma \ref{lem:estimate} and by considering \eqref{err.ite-0:crit.bounded:zy}.
\end{proof}

We provide now convergence rates for the sequence $\left\lbrace \left( x^{k} , z^{k} , y^{k} \right) \right\rbrace _{k \geq 0}$.
\begin{thm}
	\label{thm:rates-ite-0}
Suppose that Assumption \ref{assume-0:decrease} holds true and let $\left\lbrace \left( x^{k} , z^{k} , y^{k} \right) \right\rbrace _{k \geq 0}$ be a sequence generated by Algorithm \ref{algo:al.1} or Algorithm \ref{algo:al.2}, which is assumed to be bounded. Suppose further that $\Fr$ satisfies the \Loja property with \Loja constant $C_{L} > 0$ and \Loja exponent $\theta \in \left[ 0 , 1 \right)$. Let $\left( \widehat{x} , \widehat{z} , \widehat{y} \right)$ be the KKT point of the optimization problem \eqref{intro:problem} to which $\left\lbrace \left( x^{k}, z^{k}, y^{k} \right) \right\rbrace _{k \geq 0}$ converges as $k \to +\infty$. Then the following statements are true:
	\begin{enumerate}
		\item[(i)]
		\label{thm:rates-ite-0:i}
		if $\theta = 0$, then the algorithms converge in finite time;
		
		\item[(ii)]
		\label{thm:rates-ite-0:ii}
		if $\theta \in \left( 0 , 1/2 \right]$, then there exist $k_0 \geq 1$, $\widehat{C}_{0,1} , \widehat{C}_{0,2} , \widehat{C}_{0,3} > 0$ and $\widehat Q \in \left[0 , 1 \right)$ such that for every $k \geq k_{0}$
		\begin{equation*}
		\left\lVert x^{k} - \widehat{x} \right\rVert \leq \widehat{C}_{0,1} \widehat Q^{k} , \quad
		\left\lVert y^{k} - \widehat{y} \right\rVert \leq \widehat{C}_{0,2} \widehat Q^{k} , \quad
		\left\lVert z^{k} - \widehat{z} \right\rVert \leq \widehat{C}_{0,3} \widehat Q^{k} ;
		\end{equation*}
		
		\item[(iii)]
		\label{thm:rates-ite-0:iii}
		if $\theta \in \left( 1/2 , 1 \right)$, then there exist $k_0 \geq 3$ and $\widehat{C}_{1,1} , \widehat{C}_{1,2} , \widehat{C}_{1,3} > 0$ such that for every $k \geq k_{0}$
		\begin{equation*}
		\left\lVert x^{k} - \widehat{x} \right\rVert \leq \widehat{C}_{1,1} \left( k-1 \right)^{- \frac{1-\theta}{2 \theta - 1}}, \quad
		\left\lVert y^{k} - \widehat{y} \right\rVert \leq \widehat{C}_{1,2} \left( k-1 \right)^{- \frac{1-\theta}{2 \theta - 1}}, \quad
		\left\lVert z^{k} - \widehat{z} \right\rVert \leq \widehat{C}_{1,3} \left( k-2 \right)^{- \frac{1-\theta}{2 \theta - 1}} .
		\end{equation*}
	\end{enumerate}
\end{thm}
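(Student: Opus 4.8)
The strategy is to transfer the rates obtained for the regularized augmented Lagrangian values in Theorem \ref{thm:rates-obj-0} to the iterates via the pointwise estimates of Lemma \ref{lem:err.ite-0}. Recall that under the \Loja hypothesis $\Fr$ is in particular a \KL \ function with desingularization function $\varphi(s) = \tfrac{1}{1-\theta} C_{L} s^{1-\theta}$, that $\left\lbrace \E_{k} \right\rbrace_{k \geq 0}$ defined in \eqref{defi:err} is monotonically decreasing to $0$, and that Lemma \ref{lem:err.ite-0} provides for $k$ large
\begin{equation*}
\left\lVert x^{k} - \widehat{x} \right\rVert \leq \Citebx \max\left\lbrace \sqrt{\E_{k}} , \varphi(\E_{k}) \right\rbrace, \quad \left\lVert y^{k} - \widehat{y} \right\rVert \leq \Citeby \max\left\lbrace \sqrt{\E_{k}} , \varphi(\E_{k}) \right\rbrace, \quad \left\lVert z^{k} - \widehat{z} \right\rVert \leq \Citebz \max\left\lbrace \sqrt{\E_{k-1}} , \varphi(\E_{k-1}) \right\rbrace.
\end{equation*}
So all that remains is to plug into these the decay of $\E_{k}$ from Theorem \ref{thm:rates-obj-0} and to determine, in each regime of $\theta$, whether $\sqrt{\E_{k}}$ or $\varphi(\E_{k})$ dominates.

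For statement (i), $\theta = 0$: Theorem \ref{thm:rates-obj-0}(i) yields an index $k'$ with $\E_{k} = 0$, i.e. $\F_{k} = \F_{*}$, for all $k \geq k'$. Arguing exactly as in the first case of the proof of Theorem \ref{thm:conv} (using the decrease inequality \eqref{limit-0:dec:Hk} and then Lemma \ref{lem:estimate}), one gets $x^{k+1} = x^{k}$, $y^{k+1} = y^{k}$ and $z^{k+1} = z^{k}$ for all $k$ beyond some index, so the algorithm terminates in finite time.

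For statement (ii), $\theta \in (0,1/2]$: Theorem \ref{thm:rates-obj-0}(ii) gives $\widehat{C}_{0} > 0$ and $Q \in [0,1)$ with $0 \leq \E_{k} \leq \widehat{C}_{0} Q^{k}$ for $k \geq k_{0}$. Then $\sqrt{\E_{k}} \leq \sqrt{\widehat{C}_{0}}\,(Q^{1/2})^{k}$ and $\varphi(\E_{k}) \leq \tfrac{C_{L}}{1-\theta}\widehat{C}_{0}^{\,1-\theta}(Q^{1-\theta})^{k}$; since $1-\theta \geq 1/2$ and $0 \leq Q < 1$ we have $Q^{1-\theta} \leq Q^{1/2}$, hence $\max\{\sqrt{\E_{k}},\varphi(\E_{k})\} \leq \text{const}\cdot \widehat{Q}^{\,k}$ with $\widehat{Q} := Q^{1/2} \in [0,1)$. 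Because $\E_{k-1} = Q^{-1}\E_{k}\cdot(\E_{k-1}Q/\E_{k})\leq (\widehat{C}_{0}/Q)Q^{k}$ is also $O(\widehat{Q}^{\,k})$, inserting these bounds into the estimates of Lemma \ref{lem:err.ite-0} and absorbing $\Citebx$, $\Citeby$, $\Citebz$ yields the three linear rates.

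For statement (iii), $\theta \in (1/2,1)$: Theorem \ref{thm:rates-obj-0}(iii) gives $\widehat{C}_{1} > 0$ with $0 \leq \E_{k} \leq \widehat{C}_{1}(k-1)^{-1/(2\theta-1)}$ for $k \geq k_{0} \geq 3$. Then $\sqrt{\E_{k}}$ is $O\big((k-1)^{-1/(2(2\theta-1))}\big)$ and $\varphi(\E_{k})$ is $O\big((k-1)^{-(1-\theta)/(2\theta-1)}\big)$; since $\theta > 1/2$ forces $1-\theta < 1/2$, the exponent $\tfrac{1-\theta}{2\theta-1}$ is strictly smaller than $\tfrac{1}{2(2\theta-1)}$, so the $\varphi$-term decays more slowly and $\max\{\sqrt{\E_{k}},\varphi(\E_{k})\} \leq \text{const}\cdot(k-1)^{-(1-\theta)/(2\theta-1)}$ for $k$ large. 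This gives the rates for $\left\lbrace x^{k}\right\rbrace$ and $\left\lbrace y^{k}\right\rbrace$ from \eqref{err.ite-0:inq:x}–\eqref{err.ite-0:inq:y}; for $\left\lbrace z^{k}\right\rbrace$, \eqref{err.ite-0:inq:z} involves $\E_{k-1} \leq \widehat{C}_{1}(k-2)^{-1/(2\theta-1)}$, which produces the same power law with $k-1$ replaced by $k-2$. The only genuine technical point, in both (ii) and (iii), is the comparison of the exponents $1/2$ and $1-\theta$ that selects the dominant of $\sqrt{\E_{k}}$ and $\varphi(\E_{k})$, together with the harmless one-step index shift in the $z$-estimate; no estimate beyond Lemma \ref{lem:err.ite-0} and Theorem \ref{thm:rates-obj-0} is required.
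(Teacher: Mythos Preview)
Your proof is correct and follows essentially the same route as the paper: transfer the decay of $\E_{k}$ from Theorem \ref{thm:rates-obj-0} to the iterates via Lemma \ref{lem:err.ite-0}, after determining which of $\sqrt{\E_{k}}$ and $\varphi(\E_{k})$ dominates. The only cosmetic difference is that in case (ii) the paper first shows directly that $\max\{\sqrt{\E_{k}},\varphi(\E_{k})\}=\sqrt{\E_{k}}$ for $k$ large (using $2\theta-1\leq 0$), whereas you bound both terms separately by a geometric sequence with ratio $Q^{1/2}$; either argument works.
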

\begin{proof}
	By denoting $\varphi : [0,+\infty) \to [0,+\infty), \varphi \left( s \right) := \dfrac{1}{1 - \theta} C_{L} s^{1 - \theta}$, the desingularization function, there exist $k_{0}' \geq 2$ such that for every $k \geq k_{0}'$ the inequalities \eqref{err.ite-0:inq:x}-\eqref{err.ite-0:inq:z} in Lemma \ref{lem:err.ite-0} and 
$\E_{k} \leq \left ( \dfrac{1}{1 - \theta} C_{L} \right)^{\frac{2}{2 \theta - 1}}$ hold.
	
	\item[(i)] If $\theta =0$, then $\{\F_k\}_{k \geq 1}$ converges in finite time. According to \eqref{limit-0:dec:Hk}, the sequences $\left\lbrace \left( x^{k}\right) \right\rbrace _{k \geq 0}$ and $\left\lbrace \left( y^{k} \right) \right\rbrace _{k \geq 0}$ converge also in finite time. Further, by Lemma \ref{lem:estimate}, it follows that $\left\lbrace \left(z^{k}\right) \right\rbrace _{k \geq 0}$ converges in finite time, too. In other words, starting from a given index, the sequence $\left\lbrace \left( x^{k} , z^{k} , y^{k} \right) \right\rbrace _{k \geq 0}$ becomes identical to $\left( \widehat{x} , \widehat{z} , \widehat{y} \right)$  and the conclusion follows.		
	
	\item[(ii)] If $\theta \in \left( 0 , 1/2 \right]$, then $\dfrac{1}{1 - \theta} C_{L} \E_{k}^{1 - \theta} \leq \sqrt{\E_{k}}$, for every $k \geq k_0'$, which implies that $\max \left\lbrace \sqrt{\E_{k}} , \varphi \left( \E_{k} \right) \right\rbrace = \sqrt{\E_{k}}$. By Theorem \ref{thm:rates-obj-0}, there exist $k_0'' \geq 1$, $\widehat{C}_{0} > 0$ and $Q \in [0,1)$ such that for $\widehat{Q} := {Q}^\frac{1}{2}$ and every $k \geq k_0''$ it holds
	\begin{equation*}
	\sqrt{\E_{k}} \leq \sqrt{\widehat{C}_{0}} Q^{\frac{k}{2}} = \sqrt{\widehat{C}_{0}} \widehat{Q}^{k}.
	\end{equation*}
	The conclusion follows from Lemma \ref{lem:err.ite-0} for $k_0:=\max\{k_0', k_0''\}$, by noticing that
	\begin{equation*}
	\sqrt{\E_{k-1}} \leq \sqrt{\widehat{C}_{0}} Q^{\frac{k-1}{2}} = \sqrt{\dfrac{\widehat{C}_{0}}{Q}} \widehat{Q}^{k} \quad \textrm{ and } \quad \sqrt{\E_{k-2}} \leq \sqrt{\widehat{C}_{0}} Q^{\frac{k-2}{2}} = \dfrac{\sqrt{\widehat{C}_{0}}}{Q} \widehat{Q}^{k}  \forall k \geq k_0.
	\end{equation*}
	
	\item[(iii)]
	If $\theta \in \left( 1/2 , 1 \right)$, then $\E_{k}^{\frac{1}{2}} \leq \dfrac{1}{1 - \theta} C_{L} \E_{k}^{1 - \theta}$, for every $k \geq k_{0}'$, which implies that $\max \left\lbrace \sqrt{\E_{k}} , \varphi \left( \E_{k} \right) \right\rbrace = \varphi(\E_{k}) = \dfrac{1}{1 - \theta} C_{L} \E_{k}^{1 - \theta}$. By Theorem \ref{thm:rates-obj-0}, there exist $k_0'' \geq 3$ and $\widehat{C}_{1} > 0$ such that for all $k \geq k_{0}''$
	\begin{equation*}
	\dfrac{1}{1 - \theta} C_{L} \E_{k}^{1 - \theta} \leq\dfrac{1}{1 - \theta} C_{L} \widehat{C}_{1}^{1-\theta} \left( k - 2 \right) ^{- \frac{1 - \theta}{2 \theta - 1}} .
	\end{equation*}
	The conclusion follows again for $k_0:=\max\{k_0', k_0''\}$ from Lemma \ref{lem:err.ite-0}.
\end{proof}

\begin{rmk}
For $\rho = 1$ the same convergence rates can be obtained under the original Assumption \ref{ass}. Indeed, when $\rho = 1$ we have that $\Tdecy = 0$ and, as a consequence, the sequence $\left\lbrace \F_{k} \right\rbrace _{k \geq 1}$ defined in \eqref{defi:Hk} becomes
\begin{equation*}
\F_{k} =  \Lr \left( x^{k} , z^{k} , y^{k} \right) + C_1\left\lVert x^{k} - x^{k-1} \right\rVert^{2} \  \forall k \geq 1 .
\end{equation*}
In addition, the inequality \eqref{est:y-bounded-by-x} simplifies to
\begin{equation*}
\left\lVert y^{k+1} - y^{k} \right\rVert \leq \Cestx \left\lVert x^{k+1} - x^{k} \right\rVert + \Cestxx \left\lVert x^{k} - x^{k-1} \right\rVert \  \forall k \geq 1,	
\end{equation*}
as $\Cesty$ is equal to $0$. Combining this inequality with \eqref{est:z-bounded-by-x} and, by taking into account Lemma \ref{lem:subd-Hk-bound}, we obtain (instead of \eqref{subd:inq})
\begin{equation*}
\opnorm{D^{k+1}} \leq \ \Csubx \left( \left\lVert x^{k+1} - x^{k} \right\rVert + \left\lVert x^{k} - x^{k-1} \right\rVert + \left\lVert x^{k-1} - x^{k-2} \right\rVert \right) \  \forall k \geq 2.
\end{equation*}
Consequently, for every $k \geq 3$ we have that
\begin{align*}
\E_{k-2} - \E_{k+1} & = \F_{k-2} - \F_{k-1} + \F_{k-1} - \F_{k} + \F_{k} - \F_{k+1} \\
& \geq \dfrac{\Cdecxx}{4} \left( \left\lVert x^{k-1} - x^{k-2} \right\rVert ^{2} + \left\lVert x^{k} - x^{k-1} \right\rVert ^{2} + \left\lVert x^{k+1} - x^{k} \right\rVert ^{2} \right) \\
& \geq \dfrac{\Cdecxx}{12} \left( \left\lVert x^{k-1} - x^{k-2} \right\rVert + \left\lVert x^{k} - x^{k-1} \right\rVert + \left\lVert x^{k+1} - x^{k} \right\rVert \right) ^{2} \\
& \geq \dfrac{\Cdecxx}{12 \Csubx^{2}} \opnorm{D^{k+1}}^{2} .
\end{align*}

Let $\varepsilon > 0$ be such that \eqref{Loja:uniform} is fulfilled and $k_{0} \geq 3$ such that  $\left( x^{k+1}, z^{k+1}, y^{k+1} \right)$ belongs to the open ball $\Ball \left( \left( \widehat{x} , \widehat{z} , \widehat{y} \right) , \varepsilon \right)$ for every $k \geq k_{0}$. 
Then \eqref{Loja:uniform} implies that for every $k \geq k_{0}$
\begin{equation*}
\E_{k-2} - \E_{k+1} \geq \Crec \E_{k+1} , \quad \textrm{ where } \quad \Crec := \dfrac{\Cdecxx}{12 C_{L}^{2} \Csubx^{2}},
\end{equation*}
which is the key inequality for deriving  convergence rates, as we have seen above.
\end{rmk}

{\bf Acknowledgements.} The authors are thankful to Ern\"o Robert Csetnek (University of Vienna) and to two anonymous reviewers valuable comments which improved the quality of the paper.

%
%
%
%

\end{document}